\theoremstyle{plain}
\numberwithin{equation}{section}
\newtheorem{theorem}{Theorem}[section]
\newtheorem{lemma}[theorem]{Lemma}
\newtheorem{proposition}[theorem]{Proposition}
\theoremstyle{definition}
\newtheorem{defn}{Definition}[section]
\theoremstyle{remark}
\newtheorem{remk}{Remark}[section]
\begin{document}

\title{{\Large\bf{Normalized solutions for Choquard equations with critical nonlinearities on bounded domains}}}

\author{Ru Yan$^{\mathrm{a,b}}$ \\
{\small $^{\mathrm{a}}$Institute of Mathematics, AMSS, Chinese Academy of Science,
Beijing 100190, China}\\
{\small $^{\mathrm{b}}$University of Chinese Academy of Science,
Beijing 100049, China}\\
}

\date{}
\maketitle

\begin{abstract}
The aim of this work is the study of the existence of normalized solutions to the nonlinear Schrödinger equation with nonlocal nonlinearities:
    \begin{equation}\nonumber
        \left\{\begin{aligned}
            &-\Delta u =\lambda u+(I_\alpha*|u|^{2_\alpha^*})|u|^{2_\alpha^*-2}u+a(I_\alpha*|u|^p)|u|^{p-2}u,\ x\in\Omega,\\
            &u>0\ \text {in}\ \Omega,\ u=0\ \text {on}\ \partial \Omega,\  \int _{\Omega}|u|^2dx=c,\\
        \end{aligned}
        \right.
    \end{equation}
    where $c>0,\ \alpha \in (0,N),\ \frac{N+\alpha+2}{N}<p<\frac{N+\alpha}{N-2}=2_\alpha^*,\ a\ge 0,\ \Omega \subset \mathbb{R}^N (N \ge 3)$ is smooth, bounded, star-shaped and $I_\alpha$ is the Riesz potential. We prove the existence of two positive normalized solutions, one of which is a ground state and the other is a mountain pass solution.\\
\end{abstract}

\medskip

{\small \noindent \text{Key Words:} Normalized solutions, Ground state, Choquard equation, Brezis-Nirenberg problem.
	
\medskip

{\small \noindent \text{Data availability:} Data sharing is not applicable to this article as no datasets were generated or analysed during the current study.}

\section{Introduction}
We study the nonlocal nonlinear equation 
\begin{equation}\label{aa}
    \left\{\begin{aligned}
        &-\Delta u =\lambda u+(I_\alpha*|u|^{2_\alpha^*})|u|^{2_\alpha^*-2}u+a(I_\alpha*|u|^p)|u|^{p-2}u,\ x\in\Omega,\\
        &u>0\ \text {in}\ \Omega,\ u=0\ \text {on}\ \partial \Omega,\  \int _{\Omega}|u|^2dx=c,\\
    \end{aligned}
    \right.
\end{equation}
where $\ c>0,\ \alpha \in (0,N),\ \frac{N+\alpha+2}{N}=2_\alpha^\sharp<p<2_\alpha^*=\frac{N+\alpha}{N-2},\ a\ge0,\ \Omega \subset \mathbb{R}^N(N\ge 3)$ is smooth, bounded, star-shaped and $ I_\alpha:\mathbb{R}^N \to \mathbb{R}$ is the Riesz potential defined by
$$
I_{\alpha}(x)=\frac{\mathcal{A}(N,\alpha )}{|x|^{N-\alpha}} \text { with } \mathcal{A}(N,\alpha )=\frac{\Gamma\left(\frac{N-\alpha}{2}\right)}{\Gamma\left(\frac{\alpha}{2}\right) \pi^{N / 2} 2^{\alpha}} \text { for } x \in \mathbb{R}^{N} \backslash\{0\}.
$$

Equation \eqref{aa} originates from physics. For $N=3,\ \alpha =2,\ a=0$, it covers in particular the Choquard-Pekar equation, which traces back to the work conducetd by S.I. Pekar \cite{Pekar+1954} in 1954 who describes the quantum mechanics of a polaron at rest(see discussion in \cite{MR1462224}). The Choquard equation is also introduced by P. Choquard \cite{MR471785} in 1976 to describe an electron trapped in its own hole, in a certain approximation to Hartree-Fock theory of one component plasma. Additionally, it also appears in dealing with the Schrödinger equation of quantum physics together
with nonrelativistic Newtonian gravity \cite{MR1386305}, which is the reason why it is known as the Schrödinger-Newton equation. The solutions of equation \eqref{aa} correspond to standing waves of the focusing time-dependent Hartree equation
\begin{equation}\label{ab}
    \left\{\begin{aligned}
        &i\partial_t \psi +\Delta \psi+(I_\alpha*|\psi|^{2_\alpha^*})|\psi|^{2_\alpha^*-2}\psi+a(I_\alpha*|\psi|^p)|\psi|^{p-2}\psi=0,\ (t,x)\in\mathbb{R}\times \Omega,\\
        &\psi(t,x)=0,(t,x)\in\mathbb{R}\times \partial \Omega.\\
    \end{aligned}
    \right.
\end{equation}
If $u$ solves \eqref{aa}, then the form $\psi(t,x)=e^{i\lambda t}u(x)$ is the solution to \eqref{ab}. 

To the best of our knowledge, there are two different approaches to study the existence of solutions of equation (1.1) using variational methods. One way is to study the fixed frequency problem, and we recommend that readers consult the survey paper \cite{MR3625092} and its comprehensive list of references for further details. The other way is to study the fixed $L^2$-norm which has attracted considerable interest from physicists. That is, we search for critical points of $E:H^1(\Omega)\to\mathbb{R}$ given by
\begin{align*}
    E(u)=\frac{1}{2}\int_{\Omega}|\nabla u|^2 d x-\frac{1}{2\cdot 2_\alpha^*}  {\mathbb{D}}(u^+)-\frac{a}{2p}{\mathbb{B}_p}(u^+),
\end{align*}
submitted to the constraint $S_c$, where 
\begin{align*}
    &S_c:=\{u \in H_0^1(\Omega) \text { s.t.} \int_{\Omega}\left|u\right|^2 d x=c\},\\
    &{\mathbb{D}}(u):=\int_{\Omega }(I_\alpha*|u|^{2_\alpha^*})|u|^{2_\alpha^*}dx,\\
    &{\mathbb{B}_p}(u):=\int_{\Omega }(I_\alpha*|u|^{p})|u|^{p}dx.
\end{align*}
The mass $\left \| \Psi(t,\cdot ) \right \|_{L^2}= \left \| u \right \|_{L^2}$ represents the power supply in the nonlinear optics framework, or the number of particles of each component in Bose-Einstein condensates. Moreover, the existence of normalized solution is key to study orbital stability for standing waves.  

Let us recall some results of normalized solution and describe our research motivation. Consider the stationary NLS equation
\begin{equation}\label{ac}
    \left\{\begin{aligned}
        &-\Delta u =\lambda u+f(u),\ x\in\Omega,\\
        &\int _{\Omega}|u|^2dx=c,u>0.\\
    \end{aligned}
    \right.
\end{equation}
Define 
$$
J(u)=\frac{1}{2}\int_{\Omega}|\nabla u|^2 d x-\frac{1}{p}\int_{\Omega}F(u) dx,
$$
where $F(u)$ is the primitive function of $f(u)$. When $p\in(2+\frac{4}{N},2^*)$, the functional $J$ is unbounded, and we can't obtain the existence of a global minimizer by minimizing $J$ on the constraint $S_c$. When $\Omega=\mathbb{R}^N$, Jeanjean \cite{MR1430506} obtains normalized solution, and the dilation $t\in(0,+\infty)\to t*u=t^{\frac{2}{N}}u(tx)$ is the key to constructing a bounded Palai-Smale sequence. Since then, normalized solutions on entire space have been widely studied in the mass supercritical case. However, there are only a few works \cite{MR4603876,MR3689156,MR3918087,MR3318740,song2024,MR4191345,MR4803490,chang2025,MR4845016} dealing with \eqref{ac} in bounded domains, since the dilation $t*u$ is not available here. It seems that Noris, Tavares and Verzini \cite{MR3318740} study \eqref{ac} on the unitary ball with $f(u)=|u|^{p-2}u$ and $p$ is Sobolev-subcritical. And they obtain a positive solution which is a local minimizer in general bounded domains with the Sobolev critical case \cite{MR3918087}. Subsequently, Song and Zou \cite{song2024}, as well as Pierotti, Verzini and Yu \cite{MR4847285} take star-shaped domains into consideration, and identify a second positive normalized solution at the mountain pass level with small mass. Recently, Chang, Liu and Yan \cite{chang2025} establish the existence of a second positive solution which is of M-P type in general bounded domains. 

To the best of our knowledge, no one has considered normalized solutions on bounded domains to Choquard equation. A natural question which is open is:
\begin{itemize}
	\item[(Q1)] Whether \eqref{aa} has normalized solutions in bounded domains? 
\end{itemize}   
 
In the paper, we give a positive answer to this open question for shaped domains following the framework of \cite{song2024}. Denote 
\begin{align*}
    E(u)=\frac{1}{2}\int_{\Omega}|\nabla u|^2 d x-\frac{1}{2\cdot 2_\alpha^*}{\mathbb{D}}(u^+)-\frac{a}{2p}{\mathbb{B}_p}(u^+),
\end{align*}
where $u^+=\max \left \{ u,0 \right \}.$
In order to find positive solution, we search for critical points of $E(u)$ under constraint $S_c^+$, where 
$$
S_c^+:=\{u \in H_0^1(\Omega) \text { s.t.} \int_{\Omega}\left|u^+\right|^2 d x=c\}.\ 
$$  

Recall Hardy-Littlewood-Sobolev's inequality which can be found in \cite{0Analysis}.
\begin{lemma}\label{A}
    Let $N\ge1$ and $0<\alpha<N$ with $1/r+(N-\alpha)/ N+1/s=2, f \in L^r\left(\mathbb{R}^N\right)$ and $h \in L^s\left(\mathbb{R}^N\right)$. There exists a sharp constant $\mathcal{C}_H(N, \alpha, r) $, independent of $f$ and $h$, such that
\begin{align}\label{ae}
    \int_{\mathbb{R}^N} \int_{\mathbb{R}^N} \frac{f(x) g(y)}{|x-y|^{N-\alpha}} d x d y \leqslant \mathcal{C}_H(N, \alpha, r)|f|_r|h|_s.
\end{align}
If $r=s=2 N /(N+\alpha)$, then
$$
\mathcal{C}_H(N, \alpha)=\mathcal{C}_H(N,\alpha,\frac{2N}{N+\alpha})=\pi^{\frac{N-\alpha}{2}} \frac{\Gamma\left(\frac{\alpha}{2}\right)}{\Gamma\left(\frac{N+\alpha}{2}\right)}\left\{\frac{\Gamma\left(\frac{N}{2}\right)}{\Gamma(N)}\right\}^{-\frac{\alpha}{N}}.
$$
In this case, the equality in \eqref{ae} holds if and only if $f \equiv \mathcal{C}_H(N,\alpha)h$ and
$$
h(x)=C\left(\gamma^2+|x-a|^2\right)^{-(N+\alpha) / 2}
$$
for some $C \in \mathbb{C}, 0 \neq \gamma \in \mathbb{R}$ and $a \in \mathbb{R}^N$.
\end{lemma}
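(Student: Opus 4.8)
The statement is the classical Hardy--Littlewood--Sobolev inequality together with Lieb's computation of the sharp constant and the extremizers, so within the paper it is enough to quote \cite{0Analysis}; if one wanted a self-contained argument, the plan would split into two parts. The first part is to produce \emph{some} finite constant $\mathcal{C}_H(N,\alpha,r)$ for every admissible triple satisfying $1/r+(N-\alpha)/N+1/s=2$. Here I would argue by real interpolation: using the layer-cake representation $f=\int_0^\infty\chi_{\{|f|>t\}}\,dt$ (and likewise for $h$), one reduces to a uniform bound for the bilinear form evaluated on pairs of indicator functions $\chi_A,\chi_B$, which is controlled by an expression of the form $\min(|A|,|B|)^{a}|A|^{b}|B|^{c}$ with exponents dictated purely by scaling. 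This yields a restricted weak-type estimate, and then the Marcinkiewicz interpolation theorem, applied between two nearby exponent pairs, upgrades it to the claimed strong-type inequality. Equivalently, I would write the left side as $\langle f,I_\alpha*h\rangle$, split the Riesz kernel $|x-y|^{\alpha-N}$ at a radius $R$ chosen pointwise, bound the near part by the Hardy--Littlewood maximal function and the far part by H\"older, optimize over $R$ to get a weak-type bound, and interpolate; the relation between $r$ and $s$ is exactly what makes this optimization dimensionally consistent.

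The second part is the sharp constant in the conformal case $r=s=2N/(N+\alpha)$. I would start with the Riesz rearrangement inequality to reduce to $f,h$ nonnegative, radial and non-increasing, which only raises the bilinear form while preserving all norms. Then I would use the conformal invariance of the functional $\iint f(x)h(y)|x-y|^{\alpha-N}$ under the constraint $r=s=2N/(N+\alpha)$: stereographic projection transports the extremal problem to the sphere $S^N$, where it becomes the maximization of $\iint F(\xi)H(\eta)|\xi-\eta|^{\alpha-N}$ over $L^{2N/(N+\alpha)}(S^N)$. On the sphere a symmetrization argument combined with a competing-symmetries iteration (alternating a spherical rearrangement with an inversion-type map and showing the iteration contracts towards its fixed point) identifies the constant functions as the maximizers; pulling these back through the stereographic projection yields the family $h(x)=C(\gamma^2+|x-a|^2)^{-(N+\alpha)/2}$, and substituting this family into the bilinear form produces the explicit value of $\mathcal{C}_H(N,\alpha)$ in terms of Gamma functions.

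The hard part is precisely this second half: the conformal symmetry group is non-compact, so the existence of a maximizer is not automatic, and establishing it requires either a concentration-compactness argument modulo the conformal group or Lieb's contraction trick, which avoids compactness altogether; the uniqueness of extremizers up to the symmetries is equally delicate. By contrast the non-sharp inequality of the first part and the final Gamma-function bookkeeping are routine, which is why in the body of the paper we simply invoke \cite{0Analysis}.
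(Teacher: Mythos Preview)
Your proposal is correct and matches the paper's approach: the paper does not prove this lemma at all but simply cites \cite{0Analysis}, exactly as you note in your first sentence. The remainder of your sketch (real interpolation for the non-sharp inequality, Riesz rearrangement plus conformal invariance and competing symmetries for the sharp constant and extremizers) is an accurate outline of Lieb's argument, but none of it appears in the paper itself.
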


From the Hardy-Littlewood-Sobolev inequality, for all $u \in D^{1,2}\left(\mathbb{R}^N\right)$ we know
$$
\int_{\mathbb{R}^N} \int_{\mathbb{R}^N} \frac{|u(x)|^{2_\alpha^*}|u(y)|^{2_\alpha^*}}{|x-y|^{N-\alpha} } d x d y\le \mathcal{C}_H(N,\alpha)|u|_{2^*}^{22_\alpha^*}\le \mathcal{C}_H(N,\alpha)\mathcal{S}^{-2_\alpha^*}\left ( \int_{\mathbb{R}^N}|\nabla u|^2 d x \right )^{2_\alpha^*},
$$
where $\mathcal{C}_H(N,\alpha)$ is defined as in Lemma \ref{A} and $\mathcal{S}$ is the best Sobolev constant. We define
\begin{align*}
    \mathcal{S}_H:=\inf _{u \in D^{1,2}\left(\mathbb{R}^N\right) \backslash\{0\}}\frac{\int_{\mathbb{R}^N}|\nabla u|^2 d x}{\left({\mathcal{A}(N,\alpha )}\int_{\mathbb{R}^N} \int_{\mathbb{R}^N} \frac{|u(x)|^{2_\alpha^*}|u(y)|^{2_\alpha^*}}{|x-y|^{N-\alpha} } d x d y\right)^{\frac{1}{2_\alpha^*}}},
\end{align*}
which is reached (see \cite{MR4027015}) and 
\begin{align*}
    \mathcal{S}_H=\frac{\mathcal{S}}{[\mathcal{A}(N,\alpha )\mathcal{C}_H(N,\alpha)]^{\frac{N-2}{N+\alpha}}}.
\end{align*}
The Gagliardo-Nirenberg inequality of Hartree type \cite{MR3056699} also is an important tool in the paper:
\begin{align}\label{ae0}
    \int_{\Omega }(I_\alpha*|u|^{p})|u|^{p}dx\le \mathcal{C}_G\left \|\nabla u\right \|_2^{2p\eta_p}\left \| u\right \|_2^{2p(1-\eta_p)}, 
\end{align}
where $\eta_p=\frac{Np-N-\alpha }{2p},\ p\in[2_\alpha,2_\alpha^*],\ N\ge 1,\ 0<\alpha <N$ and $\mathcal{C}_G=\mathcal{C}_G(N,\alpha ,p)$ is the best constant.

Next, we will introduce a set, which plays a key role in this paper. As we all know, any critical point $u$ of $\left.E\right|_{S_c^+}$ satisfies the following Pohozaev identity:
\begin{align}
    \frac{N-2}{2} \int_{\Omega}|\nabla u|^2 d x+\frac{1}{2}\int_{\partial \Omega} \left|\nabla u\right|^2\sigma\cdot n d\sigma-\frac{N}{2} \lambda\|u\|_{2}^{2}-\frac{N-2}{2}{\mathbb{D}}(u^+)-\frac{N+\alpha}{2p} a {\mathbb{B}_p}(u^+)=0.\label{ae1}
\end{align}
Furthermore, noticing $u$ is a solution of \eqref{aa}, we also have:
$$
\int_{\Omega}|\nabla u|^2 d x-\lambda |u|_2^2-{\mathbb{D}}(u^+)-a{\mathbb{B}_p}(u^+)=0. 
$$
Note that $\sigma\cdot n > 0$ since $\Omega$ is star-shaped with respect to the origin. Combining above two equations, we get any critical point $u \in \mathcal{G}$, where 
$$
\mathcal{G}:=\left\{u \in S_c^+: \int_{\Omega}\left|\nabla u\right|^2 dx>{\mathbb{D}}(u^+)+a\eta_p{\mathbb{B}_p}(u^+)\right\}\\
$$
with boundary
$$
\partial \mathcal{G}:=\left\{u \in S_c^+: \int_{\Omega}\left|\nabla u\right|^2 dx={\mathbb{D}}(u^+)+a\eta_p{\mathbb{B}_p}(u^+)\right\}.
$$
It is easy to see that $E$ is bounded from below on $\mathcal{G}$ which implies the boundedness of any sequence contained in $\mathcal{G}$ with bounded energy. Additionally, $\int_{\Omega}\left|\nabla u\right|^2 d x$ has a positive lower bound on $\partial \mathcal{G}$. Indeed, For $a= 0$, 
\begin{align}
    E(u)&>\left (\frac{1}{2}-\frac{1}{2\cdot2_\alpha^*} \right )\int_{\Omega}\left|\nabla u\right|^2 dx \ge \frac{\alpha+2}{2N+2\alpha}\int_{\Omega}\left|\nabla u\right|^2 dx,\ \forall u\in \mathcal{G}, \label{af} 
\end{align}
\begin{align}
    \int_{\Omega}\left|\nabla u\right|^2 dx={\mathbb{D}}(u^+)\le \mathcal{S}_H^{-2_\alpha^*}\left (\int_{\Omega}\left|\nabla u\right|^2 dx\right )^{2_\alpha^*},\ \forall u\in \partial\mathcal{G}. \label{ag} 
\end{align}
For $a>0$, $2_\alpha^\sharp<p<2_\alpha^*$,
\begin{align}
    E(u)=\frac{1}{2}\int_{\Omega}\left|\nabla u\right|^2 dx-\frac{1}{2p\eta_p}(\frac{p\eta_p}{2_\alpha^*}{\mathbb{D}}(u^+)+a\eta_p{\mathbb{B}_p}(u^+)) \nonumber \\ 
    \ge \left ( \frac{1}{2}-\frac{1}{2p\eta_p}\right ) \int_{\Omega}\left|\nabla u\right|^2 dx,\ \forall u\in \mathcal{G}. \label{ah} 
\end{align}
Using the Gagliardo-Nirenberg inequality of Hartree type \eqref{ae0}, we also have
\begin{align}
    \int_{\Omega}\left|\nabla u\right|^2 dx =&{\mathbb{D}}(u^+)+a\eta_p{\mathbb{B}_p}(u^+)
    \le\mathcal{S}_H^{-2_\alpha^*}\left (\int_{\Omega}\left|\nabla u\right|^2 dx\right )^{2_\alpha^*} \nonumber\\
    &+a\eta_p\mathcal{C}_G(N,\alpha ,p)c^{p(1-\eta_p)}\left (\int_{\Omega}\left|\nabla u\right|^2 dx\right )^{p\eta_p},\ \forall u\in \partial\mathcal{G}.\label{ai}
\end{align}

\begin{remk}
    When $a>0$ and $2_\alpha< p \le 2_\alpha^\sharp$, we get
    \begin{align*}
        E(u)&> \frac{\alpha+2}{2N+2\alpha}\int_{\Omega}\left|\nabla u\right|^2 dx-a(\frac{1}{2p}-\frac{\eta_p}{2\cdot 2_\alpha^*}){\mathbb{B}_p}(u^+)\\
        &\ge \frac{\alpha+2}{2N+2\alpha}\int_{\Omega}\left|\nabla u\right|^2 dx -a(\frac{1}{2p}-\frac{\eta_p}{2\cdot 2_\alpha^*})\mathcal{C}_G^pc^{p(1-\eta_p)}(\int_{\Omega}\left|\nabla u\right|^2 dx)^{p\eta_p/2}, \ \forall u\in \mathcal{G}. 
    \end{align*}
    The argument of the above case is technical, so it is not included in this paper. 
\end{remk}
Based on the observations above, we obtain a normalized ground state to \eqref{aa} in the following sense.
\begin{defn}
    Let 
    $$
    S_c:=\{u \in H_0^1(\Omega) \text { s.t. } \int_{\Omega}\left|u\right|^2 d x=c\}.
    $$
    We say $u\in S_c$ is a normalized ground state to \eqref{aa} if  
    $$
    \widetilde{E}^{\prime} |_{S_c}\left(u_c\right)=0 \text{ and }
    \widetilde{E} (u_c)=\inf \left \{ \widetilde{E} (u):u\in S_c,\ \widetilde{E}^{\prime} |_{S_c}\left(u\right)=0\right \}, 
    $$
    where
    \begin{align*}
        \widetilde{E}(u)=\frac{1}{2}\int_{\Omega}|\nabla u|^2 d x-\frac{1}{2\cdot 2_\alpha^*}{\mathbb{D}}(u)-\frac{a}{2p}{\mathbb{B}_p}(u).
    \end{align*}
\end{defn}

We have
\begin{theorem}\label{C}
    Let $\Omega$ be smooth, bounded, and star-shaped with respect to the origin, $a= 0 \text{ or } a>0,\ 2_\alpha^\sharp<p<2_\alpha^*.$ We further assume 
    \begin{align}
        c<\sup _{u \in S_1}\left(\min\left \{\Lambda_{1,u}^\frac{N-2}{\alpha+2},\frac{\alpha+2}{2N+2\alpha}\mathcal{S}_H^\frac{N+\alpha}{\alpha+2}\Lambda_{2,u}^{-1}\right \} \right), \label{aj} 
    \end{align}
    for $a=0$, and
    \begin{align}
        c<\underset{\xi\in(0,1)}{\max}&\left\{\min\left\{\left (\frac{1}{2}-\frac{1}{2\eta_pp}\right )\xi ^{2_\alpha}\mathcal{S}_H^{\frac{N+\alpha}{\alpha+2} }\Lambda_{2,u}^{-1} \right.\right.,\nonumber \\ 
        &\left (\frac{1-\xi}{2a\eta_p\mathcal{C}(N,\mu)\mathcal{C}_{ 2N/\left ( N+\alpha \right ) }^{2p}}\right )^{\frac{2}{p\eta_p-p-2}}\left ( \left (\frac{1}{2}-\frac{1}{2\eta_pp}\right )\Lambda_{2,u}^{-1} \right )^{\frac{2\left ( p\eta_p-1\right ) }{p\eta_p-p-2} }\bigg\}\bigg\},\label{ak} 
    \end{align}
    for $a>0$, where
    \begin{align*}
        &\Lambda_{1,u}:=\frac{\int_{\Omega}\left|\nabla u\right|^2dx}{  {\mathbb{D}}(u^+)},\ \ \ 
        \Lambda_{2,u}:=\frac{1}{2} \int_{\Omega}\left|\nabla u\right|^2dx,\\
        &\Lambda_{3,u}:=\frac{\int_{\Omega}\left|\nabla u\right|^2dx}{a\eta_p {\mathbb{B}_p}(u^+)},\ \ \ 
        \Lambda_{4,u}:=\frac{2p}{|a|{\mathbb{B}_p}(u^+)}.
    \end{align*}
    Then the value 
    $$
    \nu _c:=\inf _{u \in \mathcal{G}} E(u) \in
    $$
    \begin{equation}
        \left\{\begin{aligned}
            &\left ( 0, \frac{\alpha+2}{2N+2\alpha}\mathcal{S}_H^{\ \ \frac{N+\alpha}{\alpha+2}} \right ) \text{ if } a= 0,\\
            &\left(0,\left (  \frac{1}{2}-\frac{1}{2\eta_pp} \right )\underset{\xi\in\left ( 0,1 \right ) }{\max}\min \left \{\xi ^{2_\alpha}\mathcal{S}_H^{\frac{N+\alpha}{\alpha+2} },\left ( \frac{1-\xi }{2a\eta_pC(N,\mu)\mathcal{C}_{ 2N/\left ( N+\alpha \right ) }^{2p}c^{\frac{p(1-\eta_p)}{2}}} \right )^\frac{1}{p\eta_p-1} \right \}\right)\\
            &\qquad \qquad \qquad \qquad \qquad \qquad \qquad \qquad \qquad \qquad \qquad \qquad  \qquad \text{ if }a>0,\ 2_\alpha^\sharp<p<2_\alpha^*.\nonumber \\
        \end{aligned}
        \right.
\end{equation}
    and $\nu_c$ is achieved in $\mathcal{G}$ by some $u_c \in S_c^+$. Furthermore, $u_c > 0$ is a normalized
    ground state to \eqref{aa} with Lagrange multiplier $\lambda_c$ satisfying $\lambda_c > 0$ when $a = 0$ and $\lambda_c < \lambda_1(\Omega)$ when $a \ge 0$, where $\lambda_1(\Omega)$is the first eigenvalue of $-\Delta$ on $\Omega$ with Dirichlet boundary condition.
\end{theorem}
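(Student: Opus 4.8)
The plan is to obtain $u_c$ as a constrained minimizer of $E$ on the open subset $\mathcal G$ of the manifold $S_c^+$, and to prove strong convergence of a minimizing sequence by excluding concentration below the Choquard-critical level $\tfrac{\alpha+2}{2N+2\alpha}\mathcal S_H^{(N+\alpha)/(\alpha+2)}$ (resp.\ the threshold in the statement when $a>0$). First I would verify that $\mathcal G\neq\emptyset$ and locate $\nu_c$. For a suitable nonnegative $u\in S_1$ the function $\sqrt c\,u$ lies in $S_c^+$, and, exploiting that $\mathbb D$ and $\mathbb B_p$ are homogeneous of degrees $2\cdot2_\alpha^*$ and $2p$, one checks that $\sqrt c\,u\in\mathcal G$ and $E(\sqrt c\,u)<c\,\Lambda_{2,u}$ precisely when $c$ lies below the quantity in \eqref{aj} (resp.\ \eqref{ak}), so optimizing over $u$ places $\nu_c$ strictly below the upper endpoint of the interval in the statement. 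Conversely, \eqref{af} (resp.\ \eqref{ah}) together with $\|\nabla u\|_2^2\ge\lambda_1(\Omega)\|u\|_2^2\ge\lambda_1(\Omega)c$ shows $E>0$ on $\mathcal G$, so $\nu_c$ is a positive real number in the claimed interval. Finally, \eqref{ag} (resp.\ \eqref{ai}) bounds $\|\nabla u\|_2^2$ away from $0$ on $\partial\mathcal G$, and combined with \eqref{af} (resp.\ \eqref{ah}) this gives $\inf_{\partial\mathcal G}E\ge\tfrac{\alpha+2}{2N+2\alpha}\mathcal S_H^{(N+\alpha)/(\alpha+2)}$ (resp.\ the analogous threshold); hence $\nu_c<\inf_{\partial\mathcal G}E$, the key structural gap used below.

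\textbf{Step 2 (a bounded Palais--Smale sequence and its weak limit).} Since $\nu_c<\inf_{\partial\mathcal G}E$, for small $\varepsilon>0$ the sublevel set $\{u\in\overline{\mathcal G}:E(u)\le\nu_c+\varepsilon\}$ is contained in the interior $\mathcal G$, which is open in the $C^1$-manifold $S_c^+$; Ekeland's variational principle on the complete space $\overline{\mathcal G}$ then yields $u_n\in\mathcal G$ with $E(u_n)\to\nu_c$ and $\|E'|_{S_c^+}(u_n)\|_*\to0$. By \eqref{af}/\eqref{ah} the sequence is bounded in $H_0^1(\Omega)$, so up to a subsequence $u_n\rightharpoonup u_c$ in $H_0^1(\Omega)$, $u_n\to u_c$ strongly in $L^2(\Omega)$ (the embedding $H_0^1(\Omega)\hookrightarrow L^2(\Omega)$ being compact) and in $L^q(\Omega)$ for $q<2^*$, and a.e.; the Lagrange multipliers $\lambda_n$ in $E'(u_n)=\lambda_n\Psi'(u_n)$, $\Psi(u):=\|u^+\|_2^2$, are bounded (test with $u_n$), so $\lambda_n\to\lambda_c$. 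Testing $E'(u_n)-\lambda_n\Psi'(u_n)\to0$ with $u_n^-\in T_{u_n}S_c^+$ gives $\|\nabla u_n^-\|_2^2\to0$, so one may take $u_n\ge0$. Passing to the limit, $u_c\in S_c^+$ solves the Euler--Lagrange equation weakly; testing it with $u_c^-$ forces $u_c^-=0$, whence $\|u_c\|_2^2=c>0$, and elliptic regularity together with the strong maximum principle give $u_c>0$ in $\Omega$, so $u_c$ solves \eqref{aa}.

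\textbf{Step 3 (compactness below the threshold).} Because $u_c$ solves \eqref{aa} it satisfies the Pohozaev identity \eqref{ae1}; subtracting it from the Nehari identity $\|\nabla u_c\|_2^2=\lambda_c c+\mathbb D(u_c)+a\mathbb B_p(u_c)$ exactly as in the derivation of $\mathcal G$ yields $\|\nabla u_c\|_2^2=\tfrac12\int_{\partial\Omega}|\nabla u_c|^2\,\sigma\cdot n\,d\sigma+\mathbb D(u_c)+a\eta_p\mathbb B_p(u_c)$, and since $u_c>0$ and $\Omega$ is star-shaped, Hopf's lemma makes the boundary term strictly positive, so $u_c\in\mathcal G$ and $E(u_c)\ge\nu_c$. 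Setting $v_n:=u_n-u_c\rightharpoonup0$, the Brezis--Lieb lemma and its nonlocal counterpart for the Hardy--Littlewood--Sobolev functional give $\|\nabla u_n\|_2^2=\|\nabla u_c\|_2^2+\|\nabla v_n\|_2^2+o(1)$ and $\mathbb D(u_n)=\mathbb D(u_c)+\mathbb D(v_n)+o(1)$, while $\mathbb B_p(u_n)\to\mathbb B_p(u_c)$ since $p<2_\alpha^*$. Inserting these into $\|\nabla u_n\|_2^2=\lambda_n c+\mathbb D(u_n)+a\mathbb B_p(u_n)+o(1)$ (from $E'|_{S_c^+}(u_n)\to0$) and subtracting the Nehari identity for $u_c$ gives $\|\nabla v_n\|_2^2-\mathbb D(v_n)=o(1)$, so $\|\nabla v_n\|_2^2\to\ell$ and $\mathbb D(v_n)\to\ell$ for some $\ell\ge0$; by the definition of $\mathcal S_H$, either $\ell=0$ or $\ell\ge\mathcal S_H^{(N+\alpha)/(\alpha+2)}$. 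On the other hand the Brezis--Lieb splitting of the energy gives $\nu_c=E(u_c)+\bigl(\tfrac12-\tfrac1{2\cdot2_\alpha^*}\bigr)\ell\ge\nu_c+\tfrac{\alpha+2}{2N+2\alpha}\ell$, whence $\ell\le0$; thus $\ell=0$ (the alternative $\ell\ge\mathcal S_H^{(N+\alpha)/(\alpha+2)}$ would force $\nu_c\ge\tfrac{\alpha+2}{2N+2\alpha}\mathcal S_H^{(N+\alpha)/(\alpha+2)}$, contradicting Step 1). Hence $u_n\to u_c$ in $H_0^1(\Omega)$, $E(u_c)=\nu_c$, and $\nu_c$ is attained in $\mathcal G$.

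\textbf{Step 4 (ground state, sign of $\lambda_c$), and the main obstacle.} Since $u_c>0$ and $\mathcal G$ is open in $S_c^+$, near $u_c$ the constraints $S_c^+$ and $S_c$ and the functionals $E$ and $\widetilde E$ coincide, so $\widetilde E'|_{S_c}(u_c)=0$ and $\widetilde E(u_c)=\nu_c$; for any critical point $v$ of $\widetilde E|_{S_c}$ the Pohozaev--Nehari relation gives $|v|\in\overline{\mathcal G}$, hence $\widetilde E(v)=E(|v|)\ge\nu_c$, so $u_c$ is a normalized ground state. Testing \eqref{aa} with the first Dirichlet eigenfunction $\varphi_1>0$ gives $\lambda_1(\Omega)\int_\Omega u_c\varphi_1=\lambda_c\int_\Omega u_c\varphi_1+\int_\Omega\bigl[(I_\alpha*u_c^{2_\alpha^*})u_c^{2_\alpha^*-1}+a(I_\alpha*u_c^{p})u_c^{p-1}\bigr]\varphi_1>\lambda_c\int_\Omega u_c\varphi_1$, so $\lambda_c<\lambda_1(\Omega)$; and when $a=0$, the Nehari identity gives $\lambda_c c=\|\nabla u_c\|_2^2-\mathbb D(u_c)>0$ because $u_c\in\mathcal G$. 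The main obstacle is Step 3: it requires the nonlocal Brezis--Lieb lemma for $\mathbb D$ together with the use of the Pohozaev identity to trap the weak limit strictly inside $\mathcal G$, so that the energy $\tfrac{\alpha+2}{2N+2\alpha}\ell$ carried by any concentrating bubble, necessarily at least $\tfrac{\alpha+2}{2N+2\alpha}\mathcal S_H^{(N+\alpha)/(\alpha+2)}$, is incompatible with the strict upper bound on $\nu_c$ imposed by \eqref{aj}--\eqref{ak}; a further delicate point is the sharp choice of test functions leading to those explicit smallness conditions on $c$.
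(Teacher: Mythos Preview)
Your proposal is correct and follows essentially the same approach as the paper: establish the geometry of $\mathcal G$ (the paper's Proposition~\ref{B}), extract a bounded Palais--Smale minimizing sequence via Ekeland, use the nonlocal Br\'ezis--Lieb lemma together with the Pohozaev identity (to place the weak limit in $\mathcal G$) to force the concentration defect $\ell=0$, and then verify positivity, the ground-state property, and the bounds on $\lambda_c$ exactly as the paper does. The only cosmetic differences are that you prove strong convergence $u_n\to u_c$ explicitly (the paper stops at $E(u_c)=\nu_c$) and you handle nonnegativity along the sequence rather than only at the limit; neither affects the substance of the argument.
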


\begin{remk}
    To obtain the normalized ground state to \eqref{aa}, we first prove there exists $c_0>0$ such that $\mathcal{G}$ is not empty when $c<c_0$, and all minimizing sequences to $\nu_c$ is compact. Recovering the compactness is a challenge to Brézis-Nirenberg type critical problem, and we use a concentration compactness argument to overcome it. In addition, in order to guarantee that the minimizers to $\nu_c$ are critical points of $E$ restricted on $S_c$, it is also necessary to show that they do not belong to the boundary of $\mathcal{G}$.
\end{remk}

The solutions $u_c$ found in Theorem \ref{C} appear as a local minimizer of $E$ on $S_c$ and we recall that $E$ is unbounded from below $S_c$, which indicates that $E$ has a so-called mountain-pass geometry on $S_c$. More precisely, we prove that there exists an element $v\in S_c\setminus\mathcal{G}$ and define
$$
m(c):=\inf _{\gamma  \in \Gamma} \sup _{t \in[0,1]} E(\gamma (t))>E(u_c), 
$$
where
\begin{align}
    \Gamma:=\left\{\gamma \in C\left([0,1], S_c^+\right) \text { s.t. } \gamma (0) =u_c, \gamma (1) =v \right\}.\label{ak1}
\end{align}

\begin{theorem}\label{E}
    Under the assumptions of Theorem \ref{C}, for $a=0$, we further assume that one of the following holds:
    \begin{itemize}
	    \item[$\bullet$] $N\in \left \{3,4\right \}$,
	    \item[$\bullet$] $N\ge 5$ and $ \alpha \in [N-2,N)$.
    \end{itemize}
    For $a>0$, we further assume that one of the following holds:
    \begin{itemize}
	    \item[$\bullet$] $N\ge 6$, $\alpha \in(0,N-2]$, and $2_\alpha^\sharp<p<2_\alpha^*$,
	    \item[$\bullet$] $N=5$, $ \alpha \in (0,5)$, and $ \max\left \{\frac{7+2\alpha}{6},2_\alpha^\sharp\right \}<p<2_\alpha^*$,
	    \item[$\bullet$] $N=4$, for $\alpha\in (0,2)$, $\frac{3+\alpha}{2}<p<2_\alpha^*$ and for $\alpha\in [2,4)$, $2_\alpha^\sharp<p<2_\alpha^*$,
	    \item[$\bullet$] $N=3$, for $\alpha\in (0,1)$, $\frac{5+2\alpha}{2}<p<2_\alpha^*$ and for $\alpha\in [1,3)$, $2_\alpha^\sharp<p<2_\alpha^*$.
    \end{itemize}
    Then the equation \eqref{aa} has the second positive solution $\widetilde{u_c}\ne u_c$ at the level $m(c)\in (\nu_c,\nu_c+\frac{\alpha +2}{2N+2\alpha }\mathcal{S}_H^{\frac{N+\alpha }{\alpha+2}})$.
\end{theorem}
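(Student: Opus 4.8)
The plan is to run a minimax argument on the constraint $S_c^+$ with the class $\Gamma$ defined in \eqref{ak1}, show that the mountain-pass level $m(c)$ lies strictly below the threshold $\nu_c+\frac{\alpha+2}{2N+2\alpha}\mathcal{S}_H^{(N+\alpha)/(\alpha+2)}$, and then recover a Palais–Smale sequence at level $m(c)$ that is compact. First I would verify the mountain-pass geometry: by Theorem \ref{C} the ground state $u_c$ lies in the interior of $\mathcal{G}$ and is a strict local minimizer of $E|_{S_c^+}$, while $E$ is unbounded below on $S_c$; concretely, fixing a fixed profile and pushing mass to concentrate (or, since we are on a bounded domain, using the geometry of $\partial\mathcal{G}$ as in \cite{song2024}) one produces $v\in S_c^+\setminus\mathcal{G}$ with $E(v)<E(u_c)$, so $\Gamma\neq\emptyset$ and $m(c)>E(u_c)=\nu_c$. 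Every path in $\Gamma$ must cross $\partial\mathcal{G}$, and on $\partial\mathcal{G}$ the energy is bounded below by a positive constant (from \eqref{af}--\eqref{ai}), which gives $m(c)>\nu_c$ cleanly.

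The heart of the proof is the strict upper bound $m(c)<\nu_c+\frac{\alpha+2}{2N+2\alpha}\mathcal{S}_H^{(N+\alpha)/(\alpha+2)}$. Here I would use the Choquard-type Talenti instanton $U_\varepsilon$ realizing $\mathcal{S}_H$, truncated by a cutoff near the origin (which may be taken inside $\Omega$ since $\Omega$ is open and star-shaped), normalized in $L^2$, and build a test path $t\mapsto \gamma(t)$ from $u_c$ through a suitable combination $u_c$ with a concentrating bubble, ending at $v$. One then estimates $\sup_t E(\gamma(t))$ using the now-standard Brezis–Nirenberg expansions: the critical term contributes $-\frac{1}{2\cdot 2_\alpha^*}\mathbb{D}\sim -c_1\varepsilon^{?}$ at the right rate, the gradient term contributes $\mathcal{S}_H$ plus an error of order $\varepsilon^{N-2}$ (from the cutoff), and the lower-order Hartree term $\mathbb{B}_p$ and the mass-fixing correction contribute errors whose sign and order must be controlled. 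This is exactly where the dimension/range-of-$p$ dichotomy in the statement comes from: the competition is between the cutoff error $\varepsilon^{N-2}$ and the gain coming from the subcritical Choquard term, and the listed conditions ($N\in\{3,4\}$ or $N\ge5,\ \alpha\ge N-2$ when $a=0$; the four cases when $a>0$) are precisely those for which the subcritical/error terms are negligible against the bubble's energy gain, so that the strict inequality holds for $\varepsilon$ small.

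With the strict upper bound in hand, the remaining step is compactness of a Palais–Smale sequence $(u_n)\subset S_c^+$ at level $m(c)$. I would first produce a bounded PS sequence — using that along a minimizing family of paths one can assume $\|u_n\|$ bounded (either by the monotonicity trick of \cite{MR1430506} adapted to the bounded-domain setting, or by working inside the region where $E$ controls the $H^1_0$-norm, as in the $\mathcal{G}$ estimates) — then pass to a weak limit $u$, and use a concentration-compactness / Brezis–Nirenberg splitting: $u_n=u+$ (a bubble concentrating at some point) $+o(1)$, where the bubble carries energy at least $\frac{\alpha+2}{2N+2\alpha}\mathcal{S}_H^{(N+\alpha)/(\alpha+2)}$. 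The strict inequality $m(c)<\nu_c+\frac{\alpha+2}{2N+2\alpha}\mathcal{S}_H^{(N+\alpha)/(\alpha+2)}$ together with $E(u)\ge\nu_c$ for the (nontrivial, by the Lagrange-multiplier/Pohozaev analysis) weak limit forces the bubble to vanish, so $u_n\to u$ strongly, $u$ is a critical point of $E|_{S_c^+}$ at level $m(c)$, hence $u>0$ solves \eqref{aa}, and $m(c)>\nu_c$ ensures $\widetilde{u_c}:=u\neq u_c$. The main obstacle is this last compactness argument: on a bounded domain one cannot rescale bubbles away, so one must carefully track the boundary term in the Pohozaev identity \eqref{ae1} (controlled by star-shapedness, $\sigma\cdot n>0$) and combine the nonlocal Brezis–Lieb lemma for $\mathbb{D}$ and $\mathbb{B}_p$ with the energy threshold to exclude concentration — and, as in \cite{song2024}, one must also rule out that the limit lands on $\partial\mathcal{G}$, which is where the smallness of $c$ from \eqref{aj}--\eqref{ak} is used once more.
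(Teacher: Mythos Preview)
Your overall architecture matches the paper's: mountain-pass geometry from the local minimizer $u_c$ and an endpoint $v\in S_c^+\setminus\mathcal{G}$, the strict threshold $m(c)<\nu_c+\frac{\alpha+2}{2N+2\alpha}\mathcal{S}_H^{(N+\alpha)/(\alpha+2)}$ via a $u_c$-plus-bubble test path (this is the content of the paper's Proposition~\ref{D}), and then a Brezis--Lieb splitting to exclude bubbling and get strong convergence. That part is fine and essentially identical to what the paper does.

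The one genuine gap is in how you obtain a \emph{bounded} Palais--Smale sequence at level $m(c)$. Your second alternative --- ``working inside the region where $E$ controls the $H_0^1$-norm, as in the $\mathcal{G}$ estimates'' --- does not work: any minimax sequence at level $m(c)$ necessarily crosses $\partial\mathcal{G}$, and there is no reason the associated PS sequence sits in $\mathcal{G}$, so the coercivity estimates \eqref{af}, \eqref{ah} are unavailable. Your first alternative (the monotonicity trick) is the right one, but it does not by itself hand you a bounded PS sequence for $E=E_1$. What the paper actually does (Propositions~\ref{J} and~\ref{C2}) is a two-stage argument: (i) apply Jeanjean's monotonicity trick to the family $E_\theta(u)=\tfrac12\|\nabla u\|_2^2-\theta B(u)$ to get, for a.e.\ $\theta\in[1-\epsilon,1]$, a bounded PS sequence for $E_\theta$ at level $m_\theta$; (ii) for each such $\theta$, run the same Brezis--Lieb/threshold argument (this already requires the strict energy estimate, now at level $m_\theta$, and the continuity $m_\theta\to m_1$, $\nu_{c,\theta}\to\nu_c$) to produce an \emph{actual solution} $u_\theta$ of the perturbed problem; (iii) take $\theta_n\to 1^-$ and observe that, because each $u_{\theta_n}$ is a genuine solution on the star-shaped domain, the Pohozaev identity forces $u_{\theta_n}\in\mathcal{G}_{\theta_n}$, and \emph{now} the $\mathcal{G}_{\theta_n}$-coercivity plus $m_{\theta_n}\le 2m_1$ gives a uniform $H_0^1$-bound. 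The sequence $\{u_{\theta_n}\}$ is then the bounded PS sequence for $E$ you want. The Pohozaev identity thus enters not (only) in the final compactness step, as you suggest, but crucially earlier, to bound the approximating solutions; without this intermediate passage through actual solutions of the $\theta$-perturbed problem the boundedness is not clear.
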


\begin{remk}
    One of the main changes of searching for mountain-pass solution is establishing a bounded (PS) sequence. The method to obtain a bounded (PS) sequence on entire space fails due to bounded domain is not scaling invariant. We overcome this difficulty using monotonicity trick, which originates from Struwe's work \cite{MR2431434,MR926524} and is further developed by Jeanjean into a general result \cite{MR1718530}. More precisely, we consider the family of functionals $E_\theta: H_0^1(\Omega) \to \mathbb{R} $ defined by
    \begin{align*}
        E_\theta(u)=\frac{1}{2}\int_{\Omega}|\nabla u|^2 dx-\theta\left (\frac{1}{2\cdot 2_\alpha^*}  {\mathbb{D}}(u^+)+\frac{a}{2p}{\mathbb{B}_p}(u^+)\right ).
    \end{align*}
    These functionals possess a MP geometry and we deduce from \cite{MR1718530} that there exists a sequence $\theta_j \in [\frac{1}{2},1]$ such that $\theta_j \to 1$ as $n \to \infty$ and $E_{\theta_j}$ has a bounded PS sequence $\{u_n^j\}$ at level $m_\theta$. We can prove that $\{u_n^j\}$ converges weakly to a nontrivial critical point $u_j$ of $E_{\theta_j}$. Pohozaev type identity yields the boundedness of $\{u_n\}$, which is the bounded PS sequence we are looking for.   
\end{remk}

\begin{remk}
    Another difficulty is recovering the compactness, and we use a concentration compactness argument to overcome it. The estimation of the value of the level $m(c)$  played a significant role during this process.
\end{remk}

The paper is organized as follows. In Sect. 2, we prove $\inf _{u \in \mathcal{G}} E(u)$ is attainable, and obtain a normalized ground state. In Sect. 3, we prove the existence of normalized mountain pass solution.

\section{Positive normalized ground state}

\begin{proposition}\label{B}
    Let $\Omega$ be smooth, bounded, and star-shaped with respect to the origin.
    If $u$ is a critical point of $\left.E\right|_{S_c^+}$, then $u \in \mathcal{G}$. Further, we assume that $a=0$ or $a>0,\ 2_\alpha^\sharp<p<2_\alpha^*$, and have\\
	$\mathrm{(i)}$ Any sequence ${u_n}\subset \mathcal{G}$ satisfying $\limsup _{n \rightarrow \infty} E\left(u_n\right)<\infty$ is bounded in $H_0^1(\Omega),$\\
    $\mathrm{(ii)}$ If $c$ satisfies \eqref{aj}-\eqref{ak}, then $\mathcal{G} \neq \emptyset$, and it holds
\begin{align}
    0<\inf _{u \in \mathcal{G}} E(u)<\inf _{u \in \partial \mathcal{G}} E(u),\label{al}
\end{align}
    $\mathrm{(iii)}$
    If $u\in \mathcal{G}$, there exists unique $t_u > 1$ such that $u^{t_u} \in \partial \mathcal{G}$.
\end{proposition}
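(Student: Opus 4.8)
My plan is to establish the four assertions in the order listed, relying on the coercivity estimates \eqref{af}--\eqref{ai}, the inequalities of Lemma~\ref{A} and \eqref{ae0}, and an $L^2$-preserving dilation adapted to star-shaped domains. \textbf{(Critical points belong to $\mathcal{G}$.)} The Euler--Lagrange equation for a critical point $u$ of $E|_{S_c^+}$ is $-\Delta u=\lambda u^++(I_\alpha*|u^+|^{2_\alpha^*})|u^+|^{2_\alpha^*-2}u^++a(I_\alpha*|u^+|^{p})|u^+|^{p-2}u^+$; testing it with $u^-$ gives $\int_\Omega|\nabla u^-|^2\,dx=0$, so $u=u^+\in S_c$, and then elliptic regularity, the strong maximum principle and Hopf's lemma give $u>0$ in $\Omega$ and $\partial_n u<0$ on $\partial\Omega$. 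As $\Omega$ is star-shaped about the origin, $\sigma\cdot n>0$ on $\partial\Omega$, so $\int_{\partial\Omega}|\nabla u|^2\,\sigma\cdot n\,d\sigma>0$. Subtracting the Pohozaev identity \eqref{ae1} from $\tfrac N2$ times the Nehari identity $\int_\Omega|\nabla u|^2\,dx-\lambda|u|_2^2-\mathbb{D}(u^+)-a\mathbb{B}_p(u^+)=0$ makes the $\lambda$-terms cancel and turns the coefficient of $a\mathbb{B}_p(u^+)$ into $\eta_p$, leaving $\int_\Omega|\nabla u|^2\,dx-\mathbb{D}(u^+)-a\eta_p\mathbb{B}_p(u^+)=\tfrac12\int_{\partial\Omega}|\nabla u|^2\,\sigma\cdot n\,d\sigma>0$, i.e. $u\in\mathcal{G}$. \textbf{(Item (i).)} On $\mathcal{G}$, \eqref{af} (if $a=0$) and \eqref{ah} (if $a>0$, using $p\eta_p>1$, which holds since $p>2_\alpha^\sharp$) give $E(u)\ge\kappa\int_\Omega|\nabla u|^2\,dx$ with $\kappa>0$ (namely $\kappa=\frac{\alpha+2}{2N+2\alpha}$, resp. $\kappa=\frac12-\frac1{2p\eta_p}$); hence $\limsup_n E(u_n)<\infty$ bounds $\|\nabla u_n\|_2$, and Poincaré's inequality on the bounded domain $\Omega$ upgrades this to boundedness in $H_0^1(\Omega)$.

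\textbf{(Item (ii).)} First, $\mathcal{G}\ne\emptyset$ and the upper bound on $\nu_c$: take $v\in S_1$ (and, for $a>0$, a parameter $\xi\in(0,1)$) almost realizing the supremum in \eqref{aj}, resp. \eqref{ak}, and put $u_0=\sqrt c\,v\in S_c^+$. By homogeneity $\|\nabla u_0\|_2^2=c\|\nabla v\|_2^2$, $\mathbb{D}(u_0^+)=c^{2_\alpha^*}\mathbb{D}(v^+)$ and $\mathbb{B}_p(u_0^+)=c^p\mathbb{B}_p(v^+)$, so (since $2_\alpha^*-1=\frac{\alpha+2}{N-2}>0$ and $p-1>0$) the condition on $c$ forces $\|\nabla u_0\|_2^2>\mathbb{D}(u_0^+)+a\eta_p\mathbb{B}_p(u_0^+)$, i.e. $u_0\in\mathcal{G}$ (for $a=0$ this membership is exactly $c<\Lambda_{1,v}^{(N-2)/(\alpha+2)}$), while the $\Lambda_{2,v}$-part of \eqref{aj}--\eqref{ak} forces $E(u_0)\le\tfrac12\|\nabla u_0\|_2^2=c\,\Lambda_{2,v}<\Theta_c$, where $\Theta_c$ denotes the claimed upper endpoint of the interval for $\nu_c$. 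Next, $\nu_c>0$: applying Hölder and the Sobolev inequality to $u^+$ gives the uniform bound $\|\nabla u\|_2^2\ge\mathcal{S}|\Omega|^{-2/N}c$ on all of $S_c^+$, so together with $E\ge\kappa\|\nabla u\|_2^2$ on $\mathcal{G}$ we get $\nu_c\ge\kappa\,\mathcal{S}|\Omega|^{-2/N}c>0$. Finally, the lower bound on $\partial\mathcal{G}$: when $a=0$, \eqref{ag} forces $\|\nabla u\|_2^2\ge\mathcal{S}_H^{(N+\alpha)/(\alpha+2)}$, hence $E(u)=\frac{\alpha+2}{2N+2\alpha}\|\nabla u\|_2^2\ge\Theta_c$ on $\partial\mathcal{G}$; when $a>0$, writing $t=\|\nabla u\|_2^2$ and using \eqref{ai}, $t\le\mathcal{S}_H^{-2_\alpha^*}t^{2_\alpha^*}+a\eta_p\mathcal{C}_G c^{p(1-\eta_p)}t^{p\eta_p}$, so for each $\xi\in(0,1)$ at least one of the two summands on the right is $\ge\xi t$, resp. $\ge(1-\xi)t$, which (using $2_\alpha^*>1$ and $p\eta_p>1$) gives $t\ge\min\{\xi^{1/(2_\alpha^*-1)}\mathcal{S}_H^{(N+\alpha)/(\alpha+2)},\,(\tfrac{1-\xi}{a\eta_p\mathcal{C}_G c^{p(1-\eta_p)}})^{1/(p\eta_p-1)}\}$; optimizing over $\xi$ and inserting into $E(u)\ge(\frac12-\frac1{2p\eta_p})t$ from \eqref{ah} yields $\inf_{\partial\mathcal{G}}E\ge\Theta_c$. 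Combining the three facts gives $0<\nu_c<\Theta_c\le\inf_{\partial\mathcal{G}}E$, which is \eqref{al} together with the asserted location of $\nu_c$.

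\textbf{(Item (iii).)} For $t\ge1$ let $u^t(x):=t^{N/2}u(tx)$, extended by $0$ off $\tfrac1t\Omega$; since $\Omega$ is star-shaped, $\tfrac1t\Omega\subseteq\Omega$, so $u^t\in H_0^1(\Omega)$, and since the dilation preserves the $L^2$-norm and commutes with the positive part, $\|(u^t)^+\|_2^2=\|u^+\|_2^2=c$. A change of variables gives $\|\nabla u^t\|_2^2=t^2\|\nabla u\|_2^2$, $\mathbb{D}((u^t)^+)=t^{2\cdot2_\alpha^*}\mathbb{D}(u^+)$ and $\mathbb{B}_p((u^t)^+)=t^{2p\eta_p}\mathbb{B}_p(u^+)$. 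With $A=\|\nabla u\|_2^2>0$, $D=\mathbb{D}(u^+)>0$ and $B=\mathbb{B}_p(u^+)\ge0$, set
\begin{equation*}
\psi(t):=\|\nabla u^t\|_2^2-\mathbb{D}((u^t)^+)-a\eta_p\mathbb{B}_p((u^t)^+)=t^2\big(A-t^{2(2_\alpha^*-1)}D-a\eta_p t^{2(p\eta_p-1)}B\big).
\end{equation*}
The bracket is strictly decreasing on $(0,\infty)$ (here $2_\alpha^*>1$ and $p\eta_p>1$ are used, and $D>0$), equals $A-D-a\eta_p B>0$ at $t=1$ since $u\in\mathcal{G}$, and tends to $-\infty$ as $t\to\infty$; hence it has a unique zero $t_u\in(1,\infty)$, equivalently $\psi(t_u)=0$, i.e. $u^{t_u}\in\partial\mathcal{G}$.

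\textbf{(Main obstacle.)} Items~(i), (iii) and the first assertion are essentially soft once the Pohozaev/Nehari identities and the monotonicity of $\psi(t)/t^2$ are in place. The real work is item~(ii) in the case $a>0$, where the test profile $v$ and the splitting parameter $\xi$ must be chosen so that $u_0=\sqrt c\,v$ \emph{simultaneously} lies in $\mathcal{G}$ and has energy strictly below $\inf_{\partial\mathcal{G}}E$, the Sobolev-critical Choquard term and the mass-supercritical Hartree term pulling against each other. This balance is precisely what the two-parameter smallness conditions \eqref{aj}--\eqref{ak} encode, and verifying that the constants there (together with the $\xi$-splitting estimate on $\partial\mathcal{G}$) genuinely close the inequality $\nu_c<\inf_{\partial\mathcal{G}}E$ is the technical heart of the proof.
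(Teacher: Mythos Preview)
Your proposal is correct and follows essentially the same route as the paper. The only cosmetic deviations are that you bound $\nu_c$ from below via H\"older--Sobolev ($\|\nabla u\|_2^2\ge \mathcal{S}|\Omega|^{-2/N}c$) where the paper uses the first Dirichlet eigenvalue ($\|\nabla u\|_2^2\ge\lambda_1(\Omega)c$), and for item~(iii) you analyze the $\mathcal{G}$-defining quantity $\psi(t)$ directly rather than the derivative $\phi'(t)$ of $t\mapsto E(u^t)$; both are equivalent reformulations of the same computation.
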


\begin{proof}
    Since $\Omega$ is star-shaped with respect to the origin, using the Pohozaev identity \eqref{ae1}, we have $u \in \mathcal{G}$ for any $u$ with ${\left.E'\right|_{S_c^+}}(u)=0$. \\

    (i) Let $\{u_n\} \subset \mathcal{G}$ satisfy $\underset{n \rightarrow \infty}{\limsup}E\left(u_n\right)<\infty $. For $a= 0$, using \eqref{af}, we get
    $$
    \limsup _{n \rightarrow \infty} \int_{\Omega}|\nabla u|^2 dx \le \frac{2N+2\alpha}{\alpha+2} \limsup _{n \rightarrow \infty}E\left(u_n\right)<\infty
    $$
    yielding $\{u_n\}$ is bounded in $H_0^1(\Omega)$.
    Notice that $p\eta_p> 1$ when
    $2_\alpha^\sharp<p<2_\alpha^*$. For $a > 0,\ 2_\alpha^\sharp<p<2_\alpha^*$, using \eqref{ah} we get 
    $$
    \limsup _{n \rightarrow \infty} \int_{\Omega}|\nabla u|^2 dx \le (\frac{1}{2}-\frac{1}{2p\eta_p} )\limsup _{n \rightarrow \infty}E\left(u_n\right)<\infty
    $$
    yielding $\{u_n\}$ is bounded in $H_0^1(\Omega)$.

    (ii) When $a=0$, for $u \in S_1^+,\ c<\Lambda_{1,u}^\frac{N-2}{\alpha+2}$ implies that 
    \begin{align*}
        \int_{\Omega}|\nabla(\sqrt{c} u)|^2 d x  =c \int_{\Omega}|\nabla u|^2 dx >c^{2_\alpha^*}\mathbb{D}(\sqrt{c}u^+) =\int_{\Omega }(I_\alpha*|\sqrt{c} u^+|^{2_\alpha^*})|\sqrt{c} u^+|^{2_\alpha^*}dx. 
    \end{align*}
    Hence, $\sqrt{c}u \in \mathcal{G}$, yielding that $\mathcal{G}$ is not empty. Further, using \eqref{ag} one gets 
    $$
    \inf _{v \in \partial \mathcal{G}} \int_{\Omega}|\nabla v|^2 dx \geq \mathcal{S}_H^\frac{N+\alpha}{\alpha+2}.
    $$
    Then using \eqref{af} we have 
    $$
    \inf _{v \in \partial \mathcal{G}}E(v)\ge \frac{\alpha+2}{2N+2\alpha}\int_{\Omega}|\nabla v|^2 dx\ge \frac{\alpha+2}{2N+2\alpha}\mathcal{S}_H^\frac{N+\alpha}{\alpha+2}.
    $$
    For $u\in S_1^+,\ c<\frac{\alpha+2}{2N+2\alpha}\mathcal{S}_H^\frac{N+\alpha}{\alpha+2}\Lambda_{2,u}^{-1}$ implies that 
    $$
    E(\sqrt{c} u)<\frac{c}{2} \int_{\Omega}|\nabla u|^2 d x<\frac{\alpha+2}{2N+2\alpha}\mathcal{S}_H^\frac{N+\alpha}{\alpha+2} \leq \inf _{v \in \partial \mathcal{G}} E(v).
    $$
    When \eqref{aj} holds, we can take some $u\in S_1^+$ such that $\sqrt{c}u \in \mathcal{G}$ and 
    $$
    \inf _{v \in \mathcal{G}}E(v)\le E(\sqrt{c}u)<\inf _{v \in \partial \mathcal{G}}E(v).
    $$
    Moreover, using \eqref{af} again we have   
    $$
    \inf _{v \in \mathcal{G}}E(v)\ge \frac{\alpha+2}{2N+2\alpha}\int_{\Omega}|\nabla v|^2 dx \ge \frac{\alpha+2}{2N+2\alpha}\lambda_1c>0.
    $$
    Hence, \eqref{am} holds true.\\

    When $a>0$ and $2_\alpha^\sharp<p<2_\alpha^*$, for any $u\in S_1^+$ and some $\tau \in (0,1)$, combining $c<(\tau \Lambda_{1,u})^\frac{N-2}{\alpha+2}$ and $c<((1-\tau)\Lambda_{3,u})^{1/(p-1)}$ implies that 
    \begin{align*}
        \int_{\Omega}|\nabla(\sqrt{c} u)|^2 d x  &=c \int_{\Omega}|\nabla u|^2 dx=c\tau \int_{\Omega}|\nabla u|^2 dx+c(1-\tau)\int_{\Omega}|\nabla u|^2 dx\\
        &>c^{2_\alpha^*}{\mathbb{D}}(u^+)+a\eta_pc^p\mathbb{B}_p(u^+)={\mathbb{D}}(\sqrt{c}u^+)+a\eta_p\mathbb{B}_p(\sqrt{c}u^+).
    \end{align*}
    Hence, $\sqrt{c}u \in \mathcal{G}$, yielding that $\mathcal{G}$ is not empty. Further, using \eqref{ai} one gets for any $\xi \in (0,1)$,
    $$
    \inf _{v \in \partial \mathcal{G}} \int_{\Omega}|\nabla v|^2 dx\geq \xi ^{\frac{N-2}{\alpha+2}}\mathcal{S}_H^{\frac{N+\alpha}{\alpha+2} }
    $$
    or
    $$
    \inf _{v \in \partial \mathcal{G}} \int_{\Omega}|\nabla v|^2 dx \geq \left ( \frac{1-\xi }{a\eta_p\mathcal{C}_G(N,\alpha ,p)c^{p(1-\eta_p)}} \right )^\frac{1}{p\eta_p-1}. 
    $$
    Then using \eqref{ah} we have for any $\xi \in (0,1)$,
    $$
    \inf _{v \in \partial \mathcal{G}}E(v)\ge\left (  \frac{1}{2}-\frac{1}{2\eta_pp} \right )\min \left \{\xi ^{\frac{N-2}{\alpha+2}}\mathcal{S}_H^{\frac{N+\alpha}{\alpha+2} },\ \left ( \frac{1-\xi }{a\eta_p\mathcal{C}_G(N,\alpha ,p)c^{p(1-\eta_p)}} \right )^\frac{1}{p\eta_p-1}\right \} .
    $$
    For $u\in S_1^+$, from 
    \begin{align*}
        &c<\underset{\xi\in(0,1)}{\max}\left\{\min\left\{\left (\frac{1}{2}-\frac{1}{2\eta_pp}\right )\xi ^{\frac{N-2}{\alpha+2}}\mathcal{S}_H^{\frac{N+\alpha}{\alpha+2} }\Lambda_{2,u}^{-1} ,\right.\right.\\
        &\left (\frac{1-\xi}{a\eta_p\mathcal{C}_G(N,\alpha ,p)}\right )^{\frac{2}{p\eta_p-p-2}}\left ( \left (\frac{1}{2}-\frac{1}{2\eta_pp}\right )\Lambda_{2,u}^{-1} \right )^{\frac{2\left ( p\eta_p-1\right ) }{p\eta_p-p-2} }\bigg\}\bigg\},
    \end{align*}
    we know that exist some $\xi_0 \in (0,1)$ such that 
    \begin{align*}
        &E(\sqrt{c} u)<\frac{c}{2} \int_{\Omega}|\nabla u|^2 d x\\
        <&\left (\frac{1}{2}-\frac{1}{2\eta_pp}\right ) \min \left \{\xi_0 ^{\frac{N-2}{\alpha+2}}\mathcal{S}_H^{\frac{N+\alpha}{\alpha+2} },\ \left ( \frac{1-\xi_0 }{a\eta_p\mathcal{C}_G(N,\alpha ,p)c^{p(1-\eta_p)}} \right )^\frac{1}{p\eta_p-1}\right \} \\
        \le& \inf _{v \in \partial \mathcal{G}}E(v).
    \end{align*}
    When \eqref{ak} holds, we can take some $u\in S_1^+$ such that $\sqrt{c}u \in \mathcal{G}$ and 
    $$
    \inf _{v \in \mathcal{G}}E(v)\le E(\sqrt{c}u)<\inf _{v \in \partial \mathcal{G}}E(v).
    $$
    Moreover, using \eqref{ah} again we have   
    $$
    \inf _{v \in \mathcal{G}}E(v)\ge \left (\frac{1}{2}-\frac{1}{2\eta_pp}\right ) \inf _{v \in \mathcal{G}}\int_{\Omega}|\nabla v|^2 dx \ge \left (\frac{1}{2}-\frac{1}{2\eta_pp}\right )\lambda_1c>0.
    $$
    Hence, \eqref{al} holds true.

    (iii) Recall that $u^t(x):=t^{N/2}u(tx)$, with $t\ge1.$ For $u \in S_c^+$, we define 
    $$
    \phi(t):=E(u^t)=\frac{t^2}{2}\int_{\Omega}|\nabla u|^2 d x-\frac{t^{2\cdot2_\alpha^*}}{2\cdot 2_\alpha^*}  {\mathbb{D}}(u^+)-\frac{a}{p}t^{2\eta_pp}{\mathbb{B}_p}(u^+).
    $$
    the derivative of $\phi(t)$ satisfies
    \begin{align*}
        \phi'(t)&=t\int_{\Omega}|\nabla u|^2 d x-t^{2\cdot2_\alpha^*-1}  {\mathbb{D}}(u)-2a\eta_pt^{2\eta_p-1}{\mathbb{B}_p}(u)\\
        &=t^{-1}\left (\int_{\Omega}|\nabla u^t|^2 dx-
        {\mathbb{D}}(u^t)-
        a\eta_p {\mathbb{B}_p}(u^t)\right ). 
    \end{align*}
    Hence, $u^t\in \mathcal{G}$ is equivalent to $\phi'(t)>0$ and $u^t\in \partial \mathcal{G}$ is equivalent to $\phi'(t)=0$. Since $2_\alpha^*>\max\{1,p\eta_p\},$ direct calculations show that for a unique $t_u>0,\ \phi'(t_u)=0, \phi'(t)>0$ in $(0,t_u)$ and $\phi'(t)<0$ in $(t_u,\infty),\ u\in \mathcal{G}$ implies that $\phi'(1)>0$, and so $t_u>0$, and so $t_u>1$. Further, $\phi'(t_u)=0$ shows $u^{t_u}\in \partial \mathcal{G}$.      
\end{proof}

Before proving Theorem \ref{C}, we introduce the Brezis-Lieb type lemma for the nonlocal term, see \cite[Lemma 2.2]{MR3817173} for the critical case and \cite[Lemma 3.4]{MR2088936} for the subcritical case.

\begin{lemma}\label{L}
    Let $N\ge3,\ 0<\alpha<N$ and $2_\alpha \le p \le2_\alpha^*$, if $\{u_n\}$ is a bounded sequence in $L^{2^*}(\mathbb{R}^N)$ such that $u_n\to u$ a.e. in $\mathbb{R}^N$ as $n\to \infty$, then we have 
    $$
    \int_{\mathbb{R}^N}(I_\alpha*|u_n|^{p})|u_n|^{p}dx=\int_{\mathbb{R}^N}(I_\alpha*|u_n-u|^{p})|u_n-u|^{p}dx+\int_{\mathbb{R}^N}(I_\alpha*|u|^{p})|u|^{p}dx+o_n(1).
    $$  
\end{lemma}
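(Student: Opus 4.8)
The plan is to reduce this to the bilinearity of the Riesz-potential pairing combined with the classical Brézis–Lieb lemma applied to the density $t\mapsto|t|^{p}$. I would set $w_n:=u_n-u$ and, for $f,g\in L^{2N/(N+\alpha)}(\mathbb{R}^N)$, introduce the symmetric bilinear form
$$
\mathcal{B}(f,g):=\int_{\mathbb{R}^N}(I_\alpha*f)\,g\,dx=\mathcal{A}(N,\alpha)\int_{\mathbb{R}^N}\int_{\mathbb{R}^N}\frac{f(x)g(y)}{|x-y|^{N-\alpha}}\,dx\,dy,
$$
so that $\int_{\mathbb{R}^N}(I_\alpha*|v|^{p})|v|^{p}\,dx=\mathcal{B}(|v|^{p},|v|^{p})$. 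By the Hardy–Littlewood–Sobolev inequality (Lemma~\ref{A} with $r=s=\tfrac{2N}{N+\alpha}$), $\mathcal{B}$ is continuous on $L^{2N/(N+\alpha)}(\mathbb{R}^N)\times L^{2N/(N+\alpha)}(\mathbb{R}^N)$, and $I_\alpha*f\in L^{2N/(N-\alpha)}(\mathbb{R}^N)=\big(L^{2N/(N+\alpha)}(\mathbb{R}^N)\big)'$ whenever $f\in L^{2N/(N+\alpha)}(\mathbb{R}^N)$.

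Next I would note that $2_\alpha\le p\le 2_\alpha^{*}$ is equivalent to $2\le\tfrac{2Np}{N+\alpha}\le 2^{*}$, so the boundedness hypothesis on $\{u_n\}$ (which in the applications of the lemma is an $H_0^1(\Omega)$-bound) yields that $\{|u_n|^{p}\}$ and $\{|w_n|^{p}\}$ are bounded in $L^{2N/(N+\alpha)}(\mathbb{R}^N)$. Applying the classical Brézis–Lieb lemma to $t\mapsto|t|^{p}$ — legitimate since $u_n\to u$ a.e.\ and $\{u_n\}$ is bounded in $L^{2Np/(N+\alpha)}$ — gives
$$
r_n:=|u_n|^{p}-|w_n|^{p}\longrightarrow|u|^{p}\qquad\text{strongly in }L^{2N/(N+\alpha)}(\mathbb{R}^N),
$$
while $w_n\to 0$ a.e.\ together with boundedness in the reflexive space $L^{2N/(N+\alpha)}$ forces $|w_n|^{p}\rightharpoonup 0$ weakly there.

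The identity would then follow by a continuity argument for $\mathcal{B}$. Writing $|u_n|^{p}=|w_n|^{p}+r_n$ and expanding by symmetry and bilinearity,
$$
\int_{\mathbb{R}^N}(I_\alpha*|u_n|^{p})|u_n|^{p}\,dx-\int_{\mathbb{R}^N}(I_\alpha*|w_n|^{p})|w_n|^{p}\,dx=2\,\mathcal{B}(|w_n|^{p},r_n)+\mathcal{B}(r_n,r_n).
$$
Here $\mathcal{B}(r_n,r_n)\to\mathcal{B}(|u|^{p},|u|^{p})=\int_{\mathbb{R}^N}(I_\alpha*|u|^{p})|u|^{p}\,dx$ by continuity of $\mathcal{B}$ and the strong convergence of $r_n$; and, splitting $\mathcal{B}(|w_n|^{p},r_n)=\mathcal{B}(|w_n|^{p},r_n-|u|^{p})+\int_{\mathbb{R}^N}(I_\alpha*|u|^{p})|w_n|^{p}\,dx$, the first summand vanishes in the limit because $\{|w_n|^{p}\}$ is bounded and $r_n-|u|^{p}\to 0$ strongly, and the second because $I_\alpha*|u|^{p}\in L^{2N/(N-\alpha)}$ and $|w_n|^{p}\rightharpoonup 0$. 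Recalling $w_n=u_n-u$, this is exactly the asserted decomposition.

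I expect the genuine obstacle to be extracting the strong $L^{2N/(N+\alpha)}$-convergence $r_n\to|u|^{p}$ from mere a.e.\ convergence — this is where the exponent range $2_\alpha\le p\le 2_\alpha^{*}$ is used, to land $|u_n|^{p}$ in the HLS-dual exponent — together with justifying $|w_n|^{p}\rightharpoonup 0$ from boundedness plus a.e.\ convergence. Both are the standard Brézis–Lieb-type estimates (e.g.\ through $\big||a+b|^{p}-|a|^{p}\big|\le\varepsilon|a|^{p}+C_\varepsilon|b|^{p}$ and a Fatou argument), carried out as in \cite[Lemma~2.2]{MR3817173} and \cite[Lemma~3.4]{MR2088936}; with those in hand, the rest is the elementary continuity argument above.
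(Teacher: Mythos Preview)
The paper does not prove this lemma at all; it merely cites \cite[Lemma~2.2]{MR3817173} for the critical exponent and \cite[Lemma~3.4]{MR2088936} for the subcritical one. Your argument is correct and is exactly the standard route taken in those references: write the nonlocal term via the symmetric HLS-continuous bilinear form $\mathcal{B}$, use the refined Br\'ezis--Lieb lemma to obtain $|u_n|^{p}-|u_n-u|^{p}\to|u|^{p}$ strongly in $L^{2N/(N+\alpha)}$, pair this with $|u_n-u|^{p}\rightharpoonup 0$ (bounded in a reflexive Lebesgue space, a.e.\ convergent to $0$), and expand bilinearly.

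One comment, which you already anticipate: the hypothesis ``bounded in $L^{2^{*}}(\mathbb{R}^N)$'' suffices only at the upper endpoint $p=2_\alpha^{*}$, where $\tfrac{2Np}{N+\alpha}=2^{*}$. For $2_\alpha\le p<2_\alpha^{*}$ one needs boundedness in $L^{2Np/(N+\alpha)}$, which does not follow from $L^{2^{*}}$-boundedness on $\mathbb{R}^N$ alone. In every application in the paper the sequence is actually $H_0^1(\Omega)$-bounded with $\Omega$ bounded, so the required bound holds by Sobolev embedding and interpolation; your parenthetical remark about this is precisely the right caveat, and the statement as written in the paper is slightly underspecified for the full range of $p$.
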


\begin{proof}[proof of Theorem \ref{C}]
    The range of $\nu_c$ is given in Proposition \ref{B}. We prove $\nu_c$ can be achieved in $\mathcal{G}$ below. Let $\{u_n\} \subset \mathcal{G}$ be a minimizing sequence of $\nu_c$. Proposition \ref{B} $(i)$ yields that $\left(u_n\right)$ is bounded in $H_0^1(\Omega)$.

    \textbf{Claim:} $\{u_n\}$ is bounded away from $\partial \mathcal{G}$.

    Suppose on the contrary that there exists $\{v_n\}\subset \partial \mathcal{G}$ such that, up to subsequences, $u_n-v_n \rightarrow 0$ in $H_0^1(\Omega)$. $\{v_n\}$ is bounded in $H_0^1(\Omega)$ due to the boundedness of $\{u_n\}$.     
    $$
    \inf _{u \in \mathcal{G}} E(u)=\nu_c=\lim _{n \rightarrow \infty} E\left(u_n\right)=\lim _{n \rightarrow \infty} E\left(v_n\right) \geq \inf _{v \in \partial \mathcal{G}} E(v),
    $$
    contradicting \eqref{ah}. This completes the claim.

    By Ekeland variational principle, we may assume that $\left.E^{\prime}\right|_{S_c^+}\left(u_n\right)=\left.E^{\prime}\right|_{\mathcal{G}}\left(u_n\right) \rightarrow 0$ as $n \rightarrow \infty$. From the boundedness of $\{u_n\}$, we pass to a subsequence satisfying
    \begin{align*}
        & u_n \rightharpoonup \tilde{u}_c \quad \text { weakly in } H_0^1(\Omega), \\
        & u_n \rightharpoonup \tilde{u}_c \quad \text { weakly in } L^{2^*}(\Omega), \\
        & u_n \rightarrow \tilde{u}_c \quad \text { strongly in } L^r(\Omega) \text { for } 2<r<2^*, \\
        & u_n \rightarrow \tilde{u}_c \quad \text { a.e. on } \Omega.
    \end{align*}
    Since $\left.E'\right|_{S_c^+}\left(u_n\right)\to 0 \text{ as } n\to \infty$, there exists $\lambda_n$ such that $E'(u_n)-\lambda_nu_n^+ \to 0 \ as \ n \to \infty$. Let $\lambda_c$ be the Lagrange multiplier corresponding to $u_c$, and $w_n=u_n-u_c$. For some $\psi \in H_0^1(\Omega)$ with $\int_{\Omega}u_c^+\psi dx \ne 0$, we obtain 
    $$
    \lambda_n=\frac{1}{\int_{\Omega}|u_n^+\psi| dx} \left (  \left \langle E'(u_n),\psi \right \rangle+o_n(1) \right )\overset{n\to\infty}{\longrightarrow}\frac{1}{\int_{\Omega}|u_c^+\psi| dx} \left (  \left \langle E'(u_c),\psi \right \rangle+o_n(1) \right )=\lambda _c.
    $$
    Using the Brézis-Lieb Lemma (see [Lemma \ref{L}]) and the facts that
    $$
    E'(u_n)-\lambda_nu_n^+ \to 0,\ \ \ \  E'(u_c)-\lambda_cu_c^+ = 0,
    $$
    we get
    $$
    \int_{\Omega}|\nabla w_n|^2 dx={\mathbb{D}}(w_n)+o_n(1).
    $$
    Hence we assume that $\int_{\Omega}|\nabla w_n|^2 dx\to l\ge 0,\ {\mathbb{D}}(w_n)\to l\ge 0$. Using Brézis-Lieb Lemma again, we deduce that
    $$
    E(u_n)=E(u_c)+E(w_n)+o_n(1).
    $$
    Note that 
    $$
    E(w_n)=\frac{1}{2}\int_{\Omega}|\nabla w_n|^2 dx-{\mathbb{D}}(w_n)+o_n(1)=\frac{\alpha+2}{2N+2\alpha}l+o_n(1),    
    $$
    that implies $E(w_n)\ge o_n(1)$, and so $\nu_c=\lim_{n \to \infty} E(u_n)\ge E(u_c).$ On the other hand, it can be verified that $u_c\in S_c^+$ is a critical point of $E$ constrained on $S_c^+$. Since 
    $u_c \in \mathcal{G}$,we have $E(u_c)\geq \nu_c$. Hence, $E(u_c)=\nu_c$.    

    Next, we prove $u_c$ is a positive normalized ground state to \eqref{aa}. By Lagrange multiplier principle, $u_c$ satisfies 
    $$
    -\Delta u_c=\lambda_c u_c^++\mathbb{D}(u_c^+)+a\mathbb{B}_p(u_c^+)
    $$
    for some $\lambda_c$. Multiplying $u_c^-$ and integrating on $\Omega$, we obtain $\int_{\Omega}|\nabla u_c^+|^2 dx=0$, implying that $u_c^-= 0$ and hence $u_c\ge 0$. Since $u_c\ne 0$, we conclude $u_c>0$ for all $x$ by the strong maximum principle, and $\int_{\Omega}|u_c|^2 dx=\int_{\Omega}|u_c^+|^2 dx=c$. Then, we show $u_c$ is a normalized ground state. Since $u_c$ is positive, we have $E(u_c)=\widetilde{E}(u_c) $. If $v\in S_c$ satisfy $\left.E\right|_{S_c^+}'\left(v\right)= 0$, we have $|v|\in S_c^+$. Moreover, $v$ satisfies the following identity:
    \begin{align*}
        \int_{\Omega}\left|\nabla v\right|^2 dx-\frac{1}{2}\int_{\partial \Omega} \left|\nabla v\right|^2\sigma\cdot n d\sigma ={\mathbb{D}}(v)+a\eta_p{\mathbb{B}_p}(v).
    \end{align*}
    and $\int_{\Omega}|\nabla v|^2 dx=\int_{\Omega}|\nabla |v||^2 dx$. Hence, we have $|v|\in \mathcal{G}$, and then 
    $$
    \widetilde{E}(|v|)=E(|v|)\ge \nu_c=E(u_c)= \widetilde{E}(u_c).
    $$  
    Using $\int_{\Omega}|\nabla v|^2 dx=\int_{\Omega}|\nabla |v||^2 dx$ again we have $\widetilde{E}(u_c)\le \widetilde{E}(|v|)= \widetilde{E}(v).$ The arbitrariness of $v$ yields that $u_c$ is a normalized ground state solution to \eqref{aa}. 

    Finally, we estimate the range of $\lambda_c$. For $a= 0$, multiplying $u_c$ and integrating on $\Omega$ for equation \eqref{aa} of $u_c$, we obtain 
    \begin{align*}
        \lambda_cc+{\mathbb{D}}(u_c)=\int_{\Omega}|\nabla u_c|^2 dx>{\mathbb{D}}(u_c).
    \end{align*}
    Hence, $\lambda_c>0$ when $a= 0$. Let $e_1$ be the corresponding positive unit eigenfunction of $\lambda_1(\Omega)$.  Multiplying $e_1$ and integrating on $\Omega$ for equation \eqref{aa} of $u_c$, we obtain 
    \begin{align*}
        &\lambda_c\int_{\Omega}u_ce_1 dx+\int_{\Omega}(I_\alpha*|u_c|^{2_\alpha^*})|u_c|^{2_\alpha^*-2}u_ce_1dx+a\int_{\Omega}(I_\alpha*|u_c|^p)|u_c|^{p-2}u_ce_1dx\\
        =&\int_{\Omega}\nabla u_c\nabla e_1 dx=\lambda_1(\Omega)\int_{\Omega}u_ce_1 dx
    \end{align*} 
    implying that $\lambda_c<\lambda_1(\Omega)$ when $a\ge 0$. The proof is complete.
\end{proof}

\section{The second positive normalized solution of M-P type}

For $\theta \in [1/2,1]$, we consider the family of functionals $E_\theta: H_0^1(\Omega) \to \mathbb{R} $ defined by
\begin{align*}
    E_\theta(u)=\frac{1}{2}\int_{\Omega}|\nabla u|^2 dx-\theta\left (\frac{1}{2\cdot 2_\alpha^*}  {\mathbb{D}}(u^+)+\frac{a}{2p}{\mathbb{B}_p}(u^+)\right ).
\end{align*}
Similar to the case that $\theta=1$, we introduce
$$
\mathcal{G}_\theta:=\left\{u \in S_c^+: \int_{\Omega}\left|\nabla u\right|^2 dx>\theta{\mathbb{D}}(u^+)+\theta a\eta_p{\mathbb{B}_p}(u^+)\right\},
$$
for $a\ge0$. It is easy to see $\mathcal{G} \subset \mathcal{G}_\theta$ for $\theta<1$, implying $\mathcal{G}_\theta$ is not  empty when $\mathcal{G}\ne \emptyset$. We further set 
$$
\nu_{c,\theta}:=\underset{\mathcal{G}_\theta}{\inf}E_\theta.
$$ 

In order to show $E_\theta$ possesses a bounded PS sequence, we use the abstract results of \cite{MR1718530}.

\begin{theorem}\label{I}
    Let $X$ be a Banach space equipped with a norm $\left \| \cdot  \right \|_X$ and let $J \subset \mathbb{R}^{+}$ be an interval. We consider a family $\left \{E_\theta\right \}_{\theta \in I}$ of $C^1$-functionals on $X$ of the form
    $$
    E_\theta(u)=A(u)-\theta B(u), \quad \theta \in J,
    $$
    where $B(u) \geq 0, \forall u \in X$ and such that either $A(u) \rightarrow \infty$ or $B(u) \rightarrow \infty$ as $\|u\| \rightarrow$ $\infty$. We assume there are two points $\left(v_1, v_2\right)$ in $X$(independent of $\theta$ ) such that setting
    $$
    \Gamma=\left\{\gamma \in C\left([0,1],X\right)| \gamma(0)=v_1, \gamma(1)=v_2\right\}
    $$
    there holds, $\forall \tau \in I$,
    $$
    m_\theta:=\inf _{\gamma \in \Gamma} \sup _{t \in[0,1]} E_\theta(\gamma(t))>\max \left\{E_\theta\left(v_1\right), E_\theta\left(v_2\right)\right\}.
    $$
    Then, for almost every $\theta \in I$, there is a sequence $\left\{v_n\right\} \subset X$such that\\
    (i) $\left\{v_n\right\}$ is bounded in $X$, (ii) $E_\theta\left(v_n\right) \rightarrow m_\theta$, (iii) $\left.E_\theta^{\prime}\right|_{S_c^{+}}\left(v_n\right) \rightarrow 0$ in the dual $X^{\prime}$ of $X$.
\end{theorem}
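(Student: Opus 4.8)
This statement is the abstract monotonicity trick of Jeanjean \cite{MR1718530}, phrased for the constraint manifold $S_c^+$, so the most economical route is simply to invoke \cite[Theorem 1.1]{MR1718530}. For completeness I outline the argument one would reproduce.

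First I would record that $\theta \mapsto m_\theta$ is non-increasing on $J$: if $\theta_1 \le \theta_2$ then, since $B(u)\ge 0$, one has $E_{\theta_2}(u)=A(u)-\theta_2 B(u)\le A(u)-\theta_1 B(u)=E_{\theta_1}(u)$ for every $u$, hence $\sup_{t}E_{\theta_2}(\gamma(t))\le \sup_{t}E_{\theta_1}(\gamma(t))$ for each fixed $\gamma\in\Gamma$, and taking $\inf_\Gamma$ gives $m_{\theta_2}\le m_{\theta_1}$. The hypothesis $m_\theta>\max\{E_\theta(v_1),E_\theta(v_2)\}$ guarantees each $m_\theta$ is a genuine mountain-pass level. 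A non-increasing real function is differentiable at almost every point of $J$; fix such a point $\theta$ and write $m_\theta'$ for the (finite) derivative. This is the only place where "almost every" enters, and it is what permits everything that follows.

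The heart of the matter is to produce, at such a $\theta$, a bounded Palais--Smale sequence for $E_\theta|_{S_c^+}$ at level $m_\theta$. Pick $\theta_n\nearrow\theta$ and, for each $n$, a path $\gamma_n\in\Gamma$ with $\sup_{t\in[0,1]}E_{\theta_n}(\gamma_n(t))\le m_{\theta_n}+(\theta-\theta_n)$. Using the identity $E_{\theta_n}(u)-E_\theta(u)=(\theta-\theta_n)B(u)$ together with differentiability of $\theta\mapsto m_\theta$ at $\theta$, one shows there is a constant $K=K(m_\theta')$, independent of $n$, such that every $u$ on the portion of $\gamma_n$ where $E_{\theta_n}(\gamma_n(t))$ is close to $m_{\theta_n}$ satisfies $B(u)\le K$ and $E_{\theta_n}(u)\le m_\theta+1$; by the coercivity alternative ($A(u)\to\infty$ or $B(u)\to\infty$ as $\|u\|_X\to\infty$) this forces $\|u\|_X\le K'$ there, with $K'$ independent of $n$. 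One then runs the standard quantitative deformation/minimax argument of the mountain-pass theorem, \emph{localized} inside the ball $\{\|u\|_X\le K'+1\}$ of the manifold $S_c^+$: if no bounded Palais--Smale sequence for $E_\theta$ at level $m_\theta$ existed in this ball, a pseudo-gradient flow supported there would deform $\gamma_n$ below level $m_\theta-\varepsilon$ while keeping the curve in the bounded region, contradicting — for $n$ large, since $\theta_n\to\theta$ and $m_{\theta_n}\to m_\theta$ — the definition of $m_\theta$. This yields $\{v_n\}\subset S_c^+$ with $\|v_n\|_X$ bounded, $E_\theta(v_n)\to m_\theta$, and $E_\theta'|_{S_c^+}(v_n)\to 0$ in $X'$.

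The main obstacle is exactly the third paragraph: one must match almost-optimal paths for the nearby parameters $\theta_n$ with a deformation for the limiting functional $E_\theta$, and carry this out while confining all curves to a single fixed bounded set; this is where both the differentiability of $\theta\mapsto m_\theta$ and the coercivity alternative are genuinely used. The surrounding bookkeeping — that $S_c^+$ is a $C^1$ Hilbert manifold on which the deformation lemma applies, and that the terms $\mathbb{D}(u^+)$, $\mathbb{B}_p(u^+)$ entering $B$ are weakly continuous on bounded sets so the limits behave — is routine.
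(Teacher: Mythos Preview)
Your proposal is correct and matches the paper's own treatment: the paper does not prove Theorem~\ref{I} but simply quotes it as the abstract monotonicity trick from \cite{MR1718530}, exactly as you do in your first sentence. Your additional outline of Jeanjean's argument is accurate and goes beyond what the paper provides.
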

 
We shall use Theorem \ref{I} with $X=H_0^1(\Omega),\ \left \| \cdot  \right \|_X=\left \| \cdot  \right \|_{H_0^1(\Omega)},\ J=\left [ \frac{1}{2},1\right ] $. Set the range of $\gamma$ is on $S_c^+$. In order to prove the existence of bounded PS sequence to $E_\theta$, we need to prove the MP geometry of $E_\theta$ for each $\theta \in [1/2,1]$. Before that, we remark
\begin{lemma}\label{F}
    Let $\Omega$ be smooth, bounded, and star-shaped with respect to the origin, $a=0$ or $a>0,\ 2_\alpha^\sharp<p<2_\alpha^*$, and let $c$ satisfy \eqref{aj}-\eqref{ak}. Then we have 
    \begin{align}
        &\underset{\theta\to 1^-}{\lim}E_\theta=\underset{\partial \mathcal{G}}{\inf}E, \label{am}\\ 
        &\underset{\theta\to 1^-}{\lim\inf}E>\underset{\mathcal{G}}{\inf}E,\label{an}\\ 
        &\underset{\theta\to 1^-}{\lim}\nu_{c,\theta}=\nu_c.\label{ao}
    \end{align}
\end{lemma}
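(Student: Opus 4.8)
We read \eqref{am}--\eqref{ao} as equalities of real numbers: $\lim_{\theta\to1^-}\inf_{\partial\mathcal{G}_\theta}E_\theta=\inf_{\partial\mathcal{G}}E$, $\liminf_{\theta\to1^-}\inf_{\partial\mathcal{G}_\theta}E_\theta>\inf_{\mathcal{G}}E=\nu_c$, and $\lim_{\theta\to1^-}\nu_{c,\theta}=\nu_c$. The plan is to combine (a) $\theta$-uniform versions of the a priori bounds \eqref{af}--\eqref{ai}, (b) the ``upper'' one-sided estimates, via the dilation $t\mapsto u^t$ (which on $H_0^1(\Omega)$ is legitimate only for $t\ge1$, $\Omega$ being star-shaped), and (c) the ``lower'' one-sided estimates, via the concentration--compactness analysis of the proof of Theorem \ref{C} carried out uniformly in $\theta$. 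For (a): since $\theta^{-1}\le2$ on $[\tfrac12,1]$, the inequality chains in \eqref{af}--\eqref{ai} hold with $\theta$ inserted and give $E_\theta(u)\ge c_0\|\nabla u\|_2^2$ for all $u\in\mathcal{G}_\theta$ and $\theta\in[\tfrac12,1]$, with $c_0=\tfrac{\alpha+2}{2N+2\alpha}$ when $a=0$ and $c_0=\tfrac12-\tfrac1{2p\eta_p}$ when $a>0$ (so $\mathcal{G}_\theta$-sequences with bounded $E_\theta$-energy are $H_0^1(\Omega)$-bounded, uniformly in $\theta$), together with a bound $\inf_{\partial\mathcal{G}_\theta}\|\nabla u\|_2^2\ge\kappa(\theta)>0$ depending continuously on $\theta$. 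By \eqref{ae0} and Hardy--Littlewood--Sobolev, $\mathbb{D}(u^+),\mathbb{B}_p(u^+)$ are bounded on $H_0^1(\Omega)$-bounded sets, hence $E_\theta(u)-E(u)=(1-\theta)\bigl(\tfrac1{2\cdot 2_\alpha^*}\mathbb{D}(u^+)+\tfrac a{2p}\mathbb{B}_p(u^+)\bigr)\to0$ as $\theta\to1^-$, uniformly on such sets; I will also use $\|\nabla u\|_2^2\ge\lambda_1(\Omega)c>0$ on $S_c^+$ and the compactness of $H_0^1(\Omega)\hookrightarrow L^2(\Omega)$.

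For the upper estimates: fixing $u^\ast\in\mathcal{G}$ with $E(u^\ast)<\nu_c+\varepsilon$, we have $u^\ast\in\mathcal{G}_\theta$ for all $\theta\le1$, so $\nu_{c,\theta}\le E_\theta(u^\ast)\to E(u^\ast)$ and thus $\limsup_{\theta\to1^-}\nu_{c,\theta}\le\nu_c$. For \eqref{am}, fix $w\in\partial\mathcal{G}$ with $E(w)<\inf_{\partial\mathcal{G}}E+\varepsilon$; since $w\in\partial\mathcal{G}$ and $\theta<1$ we have $w\in\mathcal{G}_\theta$, and the analogue of Proposition \ref{B}(iii) for $E_\theta$ (valid because $2_\alpha^*>\max\{1,p\eta_p\}$) provides a unique $s_\theta>1$ with $w^{s_\theta}\in\partial\mathcal{G}_\theta$, and $w^{s_\theta}\in H_0^1(\Omega)$ since $s_\theta>1$. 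The defining relation for $s_\theta$ forces $s_\theta\to1^+$ as $\theta\to1^-$ (explicitly $s_\theta=\theta^{-1/(2(2_\alpha^*-1))}$ when $a=0$; for $a>0$ one passes to the limit using $2_\alpha^*-1,\,p\eta_p-1>0$ and the bounds from (a)), whence $\inf_{\partial\mathcal{G}_\theta}E_\theta\le E_\theta(w^{s_\theta})\to E(w)<\inf_{\partial\mathcal{G}}E+\varepsilon$, i.e. $\limsup_{\theta\to1^-}\inf_{\partial\mathcal{G}_\theta}E_\theta\le\inf_{\partial\mathcal{G}}E$.

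For the lower estimates the dilation cannot be reversed: moving a function of $\partial\mathcal{G}_\theta$ (where $\|\nabla u\|_2^2<\mathbb{D}(u^+)+a\eta_p\mathbb{B}_p(u^+)$) onto $\partial\mathcal{G}$ needs $t<1$, for which $u^t\notin H_0^1(\Omega)$ in general, so compactness must be recovered directly. For \eqref{ao}: take $\theta_k\to1^-$ and, by Ekeland's principle, $u_k\in\mathcal{G}_{\theta_k}$ with $E_{\theta_k}(u_k)\le\nu_{c,\theta_k}+o(1)$ and $\left.E_{\theta_k}'\right|_{S_c^+}(u_k)\to0$; by (a) these are $H_0^1(\Omega)$-bounded and $E(u_k)=E_{\theta_k}(u_k)+o(1)$. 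Up to a subsequence $u_k\rightharpoonup u$ in $H_0^1(\Omega)$; compactness of $H_0^1(\Omega)\hookrightarrow L^2(\Omega)$ forces $u\in S_c^+$ (in particular $u\ne0$, so no total loss of mass), and $\mathbb{B}_p$ is compact on bounded sets since $\tfrac{2Np}{N+\alpha}<2^*$ for $p<2_\alpha^*$. Then Lemma \ref{L}, the compactness of $\mathbb{B}_p$, and $\left.E_{\theta_k}'\right|_{S_c^+}(u_k)\to0$ reproduce the splitting from the proof of Theorem \ref{C}: $E(u_k)=E(u)+\widehat{E}(w_k)+o(1)$ with $w_k=u_k-u\rightharpoonup0$ and $\widehat{E}(w)=\tfrac12\|\nabla w\|_2^2-\tfrac1{2\cdot 2_\alpha^*}\mathbb{D}(w^+)$; passing to the limit shows that $u$ is a critical point of $\left.E\right|_{S_c^+}$, hence $u\in\mathcal{G}$ by Proposition \ref{B} and $E(u)\ge\nu_c$, while $\widehat{E}(w_k)\ge o(1)$ exactly as there. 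Therefore $\liminf_k\nu_{c,\theta_k}=\liminf_k E(u_k)\ge\nu_c$, proving \eqref{ao}. Running the same scheme on a sequence $u_k\in\partial\mathcal{G}_{\theta_k}$ with $E_{\theta_k}(u_k)\to\liminf_{\theta\to1^-}\inf_{\partial\mathcal{G}_\theta}E_\theta$ produces a weak limit on $\partial\mathcal{G}$ (here the level estimate from (b) shows a splitting bubble would add at least $\tfrac{\alpha+2}{2N+2\alpha}\mathcal{S}_H^{(N+\alpha)/(\alpha+2)}$, contradicting minimality), yielding $\liminf_{\theta\to1^-}\inf_{\partial\mathcal{G}_\theta}E_\theta\ge\inf_{\partial\mathcal{G}}E$ and hence \eqref{am}; then \eqref{an} is immediate, since by \eqref{am} and \eqref{al}, $\lim_{\theta\to1^-}\inf_{\partial\mathcal{G}_\theta}E_\theta=\inf_{\partial\mathcal{G}}E>\inf_{\mathcal{G}}E=\nu_c$.

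The main obstacle is this last step. Because $u\mapsto u^t$ is available only for $t\ge1$ on $H_0^1(\Omega)$, one cannot pull boundary or near-minimizing families of $E_\theta$ back into $\overline{\mathcal{G}}$, and so compactness has to be recovered by hand; the non-compactness of the critical Choquard term $\mathbb{D}$ then forces the concentration--compactness dichotomy, and it is the smallness assumption \eqref{aj}--\eqref{ak} (entering through $\mathcal{S}_H$ and through $\|\nabla u\|_2^2\ge\lambda_1(\Omega)c$, together with the level estimates of (b)) that excludes the escape of mass into a bubble and thereby keeps the weak limit inside $\mathcal{G}$, respectively on $\partial\mathcal{G}$. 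Everything else is a bookkeeping of \eqref{af}--\eqref{ai} with the parameter $\theta\in[\tfrac12,1]$ carried along.
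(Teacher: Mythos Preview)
Your reading of \eqref{am}--\eqref{an} as statements about $\inf_{\partial\mathcal{G}_\theta}E_\theta$ is more ambitious than what the paper actually needs (and, judging from its use, intends). The paper itself gives no proof and defers to \cite{song2024}, but the way \eqref{am} is invoked in Lemma~\ref{G}---where the mountain-pass lower bound comes from paths crossing the \emph{fixed} set $\partial\mathcal{G}$, and \eqref{ap} is literally $E_\theta(u_c)+\delta<\inf_{\partial\mathcal{G}}E_\theta$---shows that the relevant quantities are $\inf_{\partial\mathcal{G}}E_\theta$ with $\partial\mathcal{G}$ independent of $\theta$. With that reading the argument is elementary: since $B(u)\ge0$ and $\theta\le1$ one has $E_\theta\ge E$, hence $\inf_{\partial\mathcal{G}}E_\theta\ge\inf_{\partial\mathcal{G}}E$; and for any fixed $w\in\partial\mathcal{G}$, $\inf_{\partial\mathcal{G}}E_\theta\le E_\theta(w)\to E(w)$ gives the matching upper bound. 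This is \eqref{am}, and \eqref{an} follows at once from \eqref{al}. For \eqref{ao} your upper estimate is fine; for the lower bound no Ekeland or concentration--compactness is needed. If $u\in\mathcal{G}$ then $E_\theta(u)\ge E(u)\ge\nu_c$. If $u\in\mathcal{G}_\theta\setminus\mathcal{G}$ then $\|\nabla u\|_2^2\le\mathbb{D}(u^+)+a\eta_p\mathbb{B}_p(u^+)$, so \eqref{ag} (resp.\ \eqref{ai}) gives exactly the same lower bound on $\|\nabla u\|_2^2$ as on $\partial\mathcal{G}$, and the $\theta$-version of \eqref{af} (resp.\ \eqref{ah}) yields $E_\theta(u)\ge c_0\|\nabla u\|_2^2$ at least equal to the threshold appearing as the upper endpoint of the range of $\nu_c$ in Theorem~\ref{C}, hence $E_\theta(u)>\nu_c$. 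Thus $\nu_{c,\theta}\ge\nu_c$ for every $\theta\in[\tfrac12,1]$.

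Your heavier route still works for \eqref{ao}, but you skipped a step: Ekeland on $\mathcal{G}_{\theta_k}$ only gives $E_{\theta_k}'|_{\overline{\mathcal{G}_{\theta_k}}}(u_k)\to0$; to upgrade to $E_{\theta_k}'|_{S_c^+}(u_k)\to0$ you must first show (uniformly in $k$) that the near-minimizers stay away from $\partial\mathcal{G}_{\theta_k}$, as in the ``Claim'' inside the proof of Theorem~\ref{C}. More seriously, your argument for the lower bound in \eqref{am} under your stronger reading has a genuine gap. A minimizing sequence $u_k\in\partial\mathcal{G}_{\theta_k}$ is \emph{not} a Palais--Smale sequence, so the splitting $\|\nabla w_k\|_2^2=\mathbb{D}(w_k^+)+o(1)$ from the proof of Theorem~\ref{C} is unavailable; and the claim that the weak limit lies on $\partial\mathcal{G}$ is circular: passing the constraint $\|\nabla u\|_2^2=\mathbb{D}(u^+)+a\eta_p\mathbb{B}_p(u^+)$ to the limit requires strong $H_0^1$-convergence, but you use membership in $\partial\mathcal{G}$ to exclude the bubble and obtain that strong convergence. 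The elementary argument above avoids this entirely and is what the reference to \cite{song2024} is pointing to.
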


\begin{proof}
    The proof is similar to the one of \cite[Lemma 4.1]{song2024}, and we ignore the details. 
\end{proof}

The following lemma ensures that $E_\theta$ has MP geometry.
\begin{lemma}[Uniform M-P geometry.]\label{G}
    Under the assumptions of Lemma \ref{F}, there exist $\epsilon \in (0, 1/2)$ and $\delta > 0$ independent of $\theta$ such that 
    \begin{align}
        E_\theta\left(u_c\right)+\delta<\inf _{\partial \mathcal{G}} E_\theta,\  \forall \theta \in(1-\epsilon, 1],\label{ap}
    \end{align}
    and there exists $v \in S_c^{+} \backslash \mathcal{G}$ such that
    $$
    m_\theta:=\inf _{\gamma \in \Gamma} \sup _{t \in[0,1]} E_\theta(\gamma(t))>E_\theta\left(u_c\right)+\delta=\max \left\{E_\theta\left(u_c\right), E_\theta(v)\right\}+\delta,
    $$
    where
    $$
    \Gamma:=\left\{\gamma \in C\left([0,1], S_c^{+}\right): \gamma(0)=u_c, \gamma(1)=v\right\}
    $$
    is independent of $\theta$.
\end{lemma}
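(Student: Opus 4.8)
The plan is to establish the uniform mountain-pass geometry in two stages: first the separation estimate \eqref{ap}, then the construction of the far endpoint $v$ together with the strict inequality $m_\theta > E_\theta(u_c)+\delta$.

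For \eqref{ap}, I would argue by contradiction using the limits collected in Lemma \ref{F}. Suppose no such $\epsilon,\delta$ exist; then there is a sequence $\theta_j \to 1^-$ with $\inf_{\partial\mathcal G}E_{\theta_j} - E_{\theta_j}(u_c) \to 0$. Since $E_{\theta_j}(u_c)\to E_1(u_c)=E(u_c)=\nu_c$ (the functionals converge uniformly on the bounded set containing $u_c$, as $\theta_j\to 1$ and $\mathbb D,\mathbb B_p$ are finite there), and since \eqref{am} gives $\inf_{\partial\mathcal G}E_{\theta_j}\to\inf_{\partial\mathcal G}E$, we would deduce $\inf_{\partial\mathcal G}E = \nu_c$, contradicting \eqref{al} in Proposition \ref{B}(ii), i.e. $\nu_c<\inf_{\partial\mathcal G}E$. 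Taking $\delta := \tfrac12(\inf_{\partial\mathcal G}E - \nu_c)>0$ and $\epsilon$ small enough that $|E_\theta(u_c)-\nu_c|<\delta/2$ and $|\inf_{\partial\mathcal G}E_\theta - \inf_{\partial\mathcal G}E|<\delta/2$ for $\theta\in(1-\epsilon,1]$ makes \eqref{ap} precise; one shrinks $\epsilon$ once more so $\epsilon<1/2$.

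For the endpoint $v$, I would take $v = u_c^{\,t_0}$ for $t_0$ large, where $u^t(x)=t^{N/2}u(tx)$ is the dilation from Proposition \ref{B}(iii). By the analysis of $\phi(t)=E(u_c^{\,t})$ there, $\phi'(t)<0$ for $t>t_{u_c}$ and, since $2_\alpha^*>\max\{1,p\eta_p\}$, the top-order term $-\tfrac{t^{2\cdot 2_\alpha^*}}{2\cdot 2_\alpha^*}\mathbb D(u_c^+)$ dominates, so $E(u_c^{\,t})\to-\infty$ as $t\to\infty$; the same holds for $E_\theta$ uniformly in $\theta\in[1/2,1]$ since $\theta\ge 1/2$ keeps the nonlocal terms bounded below away from zero in coefficient. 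Moreover for $t>t_{u_c}$ we have $u_c^{\,t}\notin\mathcal G$ (indeed $\phi'(t)<0$ means $\int|\nabla u_c^{\,t}|^2 < \mathbb D(u_c^{\,t})+a\eta_p\mathbb B_p(u_c^{\,t})$, and the same with $\theta$ for $\theta$ close to $1$), so $v=u_c^{\,t_0}\in S_c^+\setminus\mathcal G$; choosing $t_0$ large enough we also arrange $E_\theta(v)<E_\theta(u_c)$ for all $\theta\in(1-\epsilon,1]$, so $\max\{E_\theta(u_c),E_\theta(v)\}=E_\theta(u_c)$. Note $v$ is fixed once $t_0$ is fixed, hence independent of $\theta$, and therefore so is $\Gamma$.

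It remains to show $m_\theta>E_\theta(u_c)+\delta$. The key point is that any path $\gamma\in\Gamma$ joins $u_c\in\mathcal G$ to $v\notin\mathcal G$ inside $S_c^+$, so by continuity it must cross $\partial\mathcal G$: there is $t^*\in(0,1)$ with $\gamma(t^*)\in\partial\mathcal G$. Then $\sup_{t}E_\theta(\gamma(t))\ge E_\theta(\gamma(t^*))\ge \inf_{\partial\mathcal G}E_\theta > E_\theta(u_c)+\delta$ by \eqref{ap}, and taking the infimum over $\gamma$ gives $m_\theta\ge \inf_{\partial\mathcal G}E_\theta>E_\theta(u_c)+\delta$. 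Combined with $\max\{E_\theta(u_c),E_\theta(v)\}=E_\theta(u_c)$ this yields the displayed chain of inequalities. The main obstacle I anticipate is purely bookkeeping: making the constants $\epsilon,\delta$ genuinely uniform in $\theta$ — this rests on the uniform-in-$\theta$ convergence $E_\theta\to E$ on bounded sets and on \eqref{am}, which is where Lemma \ref{F} does the real work — together with checking that the crossing argument survives the passage to $E_\theta$ (it does, since $\partial\mathcal G$ is defined with $\theta=1$ and is crossed by every path from $\mathcal G$ to its complement regardless of which functional we evaluate).
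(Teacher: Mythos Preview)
Your proposal is correct and follows essentially the same route as the paper: you derive \eqref{ap} from the convergences in Lemma \ref{F} together with the strict gap $\nu_c<\inf_{\partial\mathcal G}E$ of Proposition \ref{B}(ii), take $\delta=\tfrac12(\inf_{\partial\mathcal G}E-\nu_c)$, construct the far endpoint $v$ via the dilation $u_c^{t_0}$ with $t_0$ large (using $E_\theta(u_c^{t})\to-\infty$ uniformly in $\theta\in[1/2,1]$), and conclude with the crossing argument through $\partial\mathcal G$. The only cosmetic difference is that the paper states the argument directly (without the preliminary contradiction phrasing) and writes $v=w^t$ for a generic $w\in S_c^+$ rather than specifically $u_c$, but the substance is identical.
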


\begin{proof}
    Note that $\lim _{\theta \rightarrow 1^{-}} E_\theta\left(u_c\right)=E\left(u_c\right)=\nu_c$. Then using \eqref{am} and Proposition \ref{B} (ii) we have
    $$
    \lim _{\theta \rightarrow 1^{-}} E_\theta\left(u_c\right)=\nu_c<\inf _{\mathcal{G}} E=\lim _{\theta \rightarrow 1^{-}} \inf _{\mathcal{G}} E_\theta .
    $$
    We may choose $2 \delta=\inf _{\mathcal{G}} E-\nu_c$ and $\epsilon$ small enough, and \eqref{ap} holds.
    For $w \in S_c^{+}$, we recall that
    $$
    w^t=t^{\frac{N}{2}} w(t x) \in S_c^{+}, t \geq 1.
    $$
    For $a\ge 0$ and $2_\alpha<p<2_\alpha^*$, we have as $t \rightarrow \infty$,
    \begin{align*}
        E_\theta\left(w_t\right) =&\frac{1}{2} \int_{\Omega}\left|\nabla w_t\right|^2 d x-\theta( \frac{1}{2\cdot 2_\alpha^*}{\mathbb{D}}(w_t^+)+\frac{a}{2p}{\mathbb{B}_p}(w_t^+))\\
        \le& \frac{t^2}{2}\int_{\Omega}|\nabla w|^2 dx-\frac{1}{2}(\frac{t^{2\cdot 2_\alpha^*}}{2\cdot 2_\alpha^*}{\mathbb{D}}(w^+)+\frac{a}{p}t^{2p\gamma _p}{\mathbb{B}_p}(w^+))\\
        \rightarrow&-\infty \text { uniformly w.r.t. } \theta \in\left[\frac{1}{2}, 1\right].
    \end{align*}

    Take $v=w^t$ with $t$ large enough such that $E_\theta(v)<E_\theta\left(u_c\right)$. Also, using similar arguments to the proof to Proposition \ref{B} (iii), we can assume $v \notin \mathcal{G}$. Then for any $\gamma \in \Gamma$, there exists $t^* \in(0,1)$ such that $\gamma\left(t^*\right) \in \partial \mathcal{G}$. Hence,
    $$
    \inf _{\gamma \in \Gamma} \sup _{t \in[0,1]} E_\theta(\gamma(t)) \geq \inf _{u \in \partial \mathcal{G}} E_\theta(u) .
    $$
    By \eqref{ap} we complete the proof.

\end{proof}

\begin{lemma}\label{H}
    Under the assumptions of Lemma \ref{G}, then $\underset{\theta\to 1^-}{\lim}m_\theta=m_1$.
\end{lemma}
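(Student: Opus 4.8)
The plan is to show that $\theta \mapsto m_\theta$ is continuous from the left at $\theta = 1$ by combining monotonicity with two-sided estimates. First I would observe that for fixed $\gamma \in \Gamma$ and fixed $u$, the map $\theta \mapsto E_\theta(u) = \frac{1}{2}\int_\Omega |\nabla u|^2\,dx - \theta\bigl(\frac{1}{2\cdot 2_\alpha^*}\mathbb{D}(u^+) + \frac{a}{2p}\mathbb{B}_p(u^+)\bigr)$ is affine and non-increasing in $\theta$, since $\frac{1}{2\cdot 2_\alpha^*}\mathbb{D}(u^+) + \frac{a}{2p}\mathbb{B}_p(u^+) \ge 0$. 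Taking sup over $t\in[0,1]$ and inf over $\gamma\in\Gamma$ preserves monotonicity, so $\theta \mapsto m_\theta$ is non-increasing on $(1-\epsilon,1]$. Hence $\lim_{\theta\to 1^-} m_\theta$ exists and satisfies $\lim_{\theta\to 1^-} m_\theta \ge m_1$.

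For the reverse inequality, I would bound the difference uniformly along paths. Fix $\eta > 0$ and choose $\gamma \in \Gamma$ with $\sup_{t\in[0,1]} E_1(\gamma(t)) < m_1 + \eta$. The image $\gamma([0,1])$ is compact in $H_0^1(\Omega)$, hence the quantity $M := \sup_{t\in[0,1]}\bigl(\frac{1}{2\cdot 2_\alpha^*}\mathbb{D}(\gamma(t)^+) + \frac{a}{2p}\mathbb{B}_p(\gamma(t)^+)\bigr)$ is finite (this uses the Hardy–Littlewood–Sobolev inequality, Lemma \ref{A}, and the Gagliardo–Nirenberg–Hartree inequality \eqref{ae0} together with the Sobolev embedding to control $\mathbb{D}$ and $\mathbb{B}_p$ by powers of $\|\nabla\gamma(t)\|_2$, which is bounded on the compact path). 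Then for $\theta \in (1-\epsilon,1]$ and every $t$,
$$
E_\theta(\gamma(t)) = E_1(\gamma(t)) + (1-\theta)\left(\frac{1}{2\cdot 2_\alpha^*}\mathbb{D}(\gamma(t)^+) + \frac{a}{2p}\mathbb{B}_p(\gamma(t)^+)\right) \le E_1(\gamma(t)) + (1-\theta)M.
$$
Since this same $\gamma$ lies in $\Gamma$ (which is independent of $\theta$ by Lemma \ref{G}), taking the sup over $t$ gives $m_\theta \le \sup_t E_\theta(\gamma(t)) \le m_1 + \eta + (1-\theta)M$. Letting $\theta \to 1^-$ yields $\limsup_{\theta\to 1^-} m_\theta \le m_1 + \eta$, and since $\eta > 0$ is arbitrary, $\limsup_{\theta\to 1^-} m_\theta \le m_1$.

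Combining the two bounds gives $\lim_{\theta\to 1^-} m_\theta = m_1$. The main subtlety — not really an obstacle but the point requiring care — is verifying that $m_\theta$ is well-defined for $\theta$ near $1$, i.e.\ that $m_\theta > \max\{E_\theta(u_c), E_\theta(v)\}$ so that the mountain-pass value is genuinely a minimax over a nonempty family of admissible paths; this is exactly the content of Lemma \ref{G}, whose uniform-in-$\theta$ geometry (the constants $\epsilon, \delta$ and the endpoint $v$ being independent of $\theta$) is what makes $\Gamma$ a single fixed set usable for all $\theta$ simultaneously. One should also note that the endpoints $u_c$ and $v$ are fixed, so $E_\theta(u_c)$ and $E_\theta(v)$ are themselves continuous (affine) in $\theta$, which keeps the strict inequality $m_\theta > \max\{E_\theta(u_c),E_\theta(v)\}$ stable as $\theta \to 1^-$ and confirms there is no degeneracy in the limit.
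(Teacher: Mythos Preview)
Your proof is correct and follows the standard monotonicity argument that the paper also invokes (the paper simply cites \cite[Lemma 4.3]{song2024} and omits details). The two ingredients you use---that $\theta\mapsto m_\theta$ is non-increasing because $B(u)\ge 0$, and that for any near-optimal path $\gamma$ the quantity $\sup_{t}B(\gamma(t))$ is finite by compactness of $\gamma([0,1])$ in $H_0^1(\Omega)$ together with the HLS and Gagliardo--Nirenberg inequalities---are exactly the ones underlying the referenced proof, so your approach coincides with the paper's.
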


\begin{proof}
    The proof of Lemma \ref{H} is quiet similar to the one of \cite[Lemma 4.3]{song2024} and we ignore details.
\end{proof}

\begin{proposition}\label{J}
    (Positive solutions for almost every $\theta$). Under the assumptions of Lemma \ref{G}, when $a=0$, we further assume that one of the following holds:
    \begin{itemize}
	    \item[$\bullet$] $N\in \left \{3,4\right \}$,
	    \item[$\bullet$] $N\ge 5$ and $ \alpha \in [N-2,N)$.
    \end{itemize}
    For $a>0$, we further assume that one of the following holds:
    \begin{itemize}
	    \item[$\bullet$] $N\ge 6$, $\alpha \in(0,N-2]$, and $2_\alpha^\sharp<p<2_\alpha^*$,
	    \item[$\bullet$] $N=5$, $ \alpha \in (0,5)$, and $ \max\left \{\frac{7+2\alpha}{6},2_\alpha^\sharp\right \}<p<2_\alpha^*$,
	    \item[$\bullet$] $N=4$, for $\alpha\in (0,2)$, $\frac{3+\alpha}{2}<p<2_\alpha^*$ and for $\alpha\in [2,4)$, $2_\alpha^\sharp<p<2_\alpha^*$,
	    \item[$\bullet$] $N=3$, for $\alpha\in (0,1)$, $\frac{5+2\alpha}{2}<p<2_\alpha^*$ and for $\alpha\in [1,3)$, $2_\alpha^\sharp<p<2_\alpha^*$.
    \end{itemize}
    Then for almost every $\theta \in[1-\epsilon, 1]$ where $\epsilon$ is given by Lemma \ref{G} , there exists $a$ critical point $u_\theta$ of $E_\theta$ constrained on $S_c^{+}$at the level $m_\theta$, which solves
    \begin{equation}
        \left\{\begin{aligned}
            &-\Delta u =\lambda u+\theta (I_\alpha*|u|^{2_\alpha^*})|u|^{2_\alpha^*-2}u+a\theta(I_\alpha*|u|^p)|u|^{p-2}u,\ x\in\Omega,,\\
            &u_\theta=0 \text { on } \partial \Omega,\ u_\theta>0,\\
        \end{aligned}
        \right.\label{aq}
    \end{equation}
    when $a\ge 0$ for some $\lambda_\theta$. Furthermore, $u_\theta \in \mathcal{G}_\theta$.
\end{proposition}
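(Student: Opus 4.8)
The plan is to apply Theorem \ref{I} (Jeanjean's monotonicity trick) with $X = H_0^1(\Omega)$, $A(u) = \frac{1}{2}\int_\Omega |\nabla u|^2\,dx$, $B(u) = \frac{1}{2\cdot 2_\alpha^*}\mathbb{D}(u^+) + \frac{a}{2p}\mathbb{B}_p(u^+) \ge 0$, and $J = [1-\epsilon, 1]$, where $\epsilon$ comes from Lemma \ref{G}. The hypotheses of Theorem \ref{I} are exactly what Lemma \ref{G} supplies: the uniform mountain-pass geometry with endpoints $u_c$ and $v$ independent of $\theta$, and $B(u) \ge 0$; the coercivity alternative $A(u) \to \infty$ as $\|u\| \to \infty$ is immediate. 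Theorem \ref{I} then yields, for a.e. $\theta \in [1-\epsilon,1]$, a bounded Palais-Smale sequence $\{v_n\} \subset S_c^+$ for $E_\theta|_{S_c^+}$ at level $m_\theta$.

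Next I would extract a weak limit. By boundedness pass to a subsequence $v_n \rightharpoonup u_\theta$ weakly in $H_0^1(\Omega)$ and in $L^{2^*}(\Omega)$, $v_n \to u_\theta$ strongly in $L^r(\Omega)$ for $2 < r < 2^*$, and a.e. From $E_\theta'|_{S_c^+}(v_n) \to 0$ there are Lagrange multipliers $\lambda_n$ with $E_\theta'(v_n) - \lambda_n v_n^+ \to 0$ in $H^{-1}$; testing against a fixed $\psi$ with $\int_\Omega u_\theta^+ \psi \ne 0$ (one must first check $u_\theta \not\equiv 0$) shows $\lambda_n$ is bounded and converges to some $\lambda_\theta$, so $u_\theta$ is a weak solution of \eqref{aq} with parameter $\lambda_\theta$. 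The crucial point that $u_\theta \ne 0$ is where the dimension/range restrictions on $(N,\alpha,p)$ enter: if $u_\theta = 0$, then $v_n \rightharpoonup 0$, the subcritical term $\mathbb{B}_p(v_n^+)$ vanishes by compact embedding, and a Brézis-Lieb / concentration-compactness analysis of the critical term forces either $\int_\Omega |\nabla v_n|^2 \to 0$ (contradicting $m_\theta > 0$) or concentration giving $m_\theta \ge \frac{\alpha+2}{2N+2\alpha}\theta^{-\frac{N-2}{\alpha+2}}\mathcal{S}_H^{\frac{N+\alpha}{\alpha+2}}$; the structural hypotheses are designed precisely so that the estimate from Lemma \ref{H} and the level bound $m_\theta < \nu_{c,\theta} + \frac{\alpha+2}{2N+2\alpha}\mathcal{S}_H^{\frac{N+\alpha}{\alpha+2}}$ (to be proven via a test-function computation of the form $u_c + $ Talenti bubble) rule this out. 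This level estimate — and in particular showing the cross terms in $\mathbb{D}((u_c + R\text{-bubble})^+)$ and $\mathbb{B}_p$ have the right sign and size across the listed cases — is the main obstacle.

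Having secured $u_\theta \ne 0$, I would upgrade its properties. Testing the equation with $u_\theta^-$ gives $\int_\Omega |\nabla u_\theta^-|^2 = 0$, so $u_\theta \ge 0$, and the strong maximum principle gives $u_\theta > 0$ in $\Omega$; elliptic regularity (bootstrap using the Hardy-Littlewood-Sobolev bound on the nonlocal terms) gives enough smoothness for the Pohozaev identity. Since $u_\theta$ solves \eqref{aq} and is positive on the star-shaped domain $\Omega$, the Pohozaev identity \eqref{ae1} adapted to $E_\theta$ combined with the equation $\int_\Omega |\nabla u_\theta|^2 = \lambda_\theta |u_\theta|_2^2 + \theta\mathbb{D}(u_\theta^+) + a\theta\mathbb{B}_p(u_\theta^+)$ and the strict positivity of $\sigma \cdot n$ on $\partial\Omega$ yields $\int_\Omega |\nabla u_\theta|^2 > \theta\mathbb{D}(u_\theta^+) + a\theta\eta_p\mathbb{B}_p(u_\theta^+)$, i.e. $u_\theta \in \mathcal{G}_\theta$, exactly as in Proposition \ref{B}. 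Finally, one checks $\|u_\theta^+\|_2^2 = c$: the constraint $\|v_n^+\|_2^2 = c$ passes to the limit because $v_n^+ \to u_\theta^+$ strongly in $L^2(\Omega)$ by the compact embedding $H_0^1(\Omega) \hookrightarrow L^2(\Omega)$, so $u_\theta \in S_c^+$. That $E_\theta(u_\theta) = m_\theta$ follows from weak lower semicontinuity together with the Brézis-Lieb splitting applied along $\{v_n\}$, using again that the remainder term contributes nonnegatively; I would note this as a routine consequence of the computations already performed, deferring the full compactness discussion at the actual level $m_\theta$ to the subsequent proof of Theorem \ref{E}.
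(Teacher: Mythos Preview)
Your overall architecture --- apply Theorem~\ref{I}, extract a weak limit, verify it is a positive solution in $\mathcal{G}_\theta$ --- matches the paper's. The gap is in where you locate the difficulty. You write that ``the crucial point that $u_\theta \ne 0$ is where the dimension/range restrictions enter'', but on a bounded domain the compact embedding $H_0^1(\Omega)\hookrightarrow L^2(\Omega)$ (which you yourself invoke at the end) already gives $v_n^+\to u_\theta^+$ in $L^2$, so $\|u_\theta^+\|_2^2=c$ and $u_\theta\ne 0$ is free; your earlier worry ``one must first check $u_\theta\not\equiv 0$'' is unnecessary. The dimension restrictions, via Proposition~\ref{D}, are needed not for nontriviality but for \emph{strong} convergence $v_n\to u_\theta$ in $H_0^1$, and this is precisely the step you ``defer'' as routine. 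Without it you cannot conclude $E_\theta(u_\theta)=m_\theta$: the Br\'ezis--Lieb splitting with a nonnegative remainder yields only $E_\theta(u_\theta)\le m_\theta$, and there is no variational characterisation that forces the reverse inequality. Since the statement of Proposition~\ref{J} asserts the level is exactly $m_\theta$ (and this is what Proposition~\ref{C2} uses), the gap is genuine.

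The paper carries out the full compactness argument here, not in Theorem~\ref{E}. With $w_n=v_n-u_\theta$ one gets $\int_\Omega|\nabla w_n|^2=\theta\,\mathbb{D}(w_n)+o_n(1)$; if the common limit is $\theta l>0$ then $l\ge \mathcal{S}_H^{\frac{N+\alpha}{\alpha+2}}\theta^{-\frac{N+\alpha}{\alpha+2}}$ and
\[
m_\theta=\lim_n E_\theta(v_n)=E_\theta(u_\theta)+E_\theta(w_n)+o_n(1)\ \ge\ \nu_{c,\theta}+\tfrac{\alpha+2}{2N+2\alpha}\mathcal{S}_H^{\frac{N+\alpha}{\alpha+2}}\theta^{\frac{2-N}{\alpha+2}},
\]
where the bound $E_\theta(u_\theta)\ge\nu_{c,\theta}$ uses that $u_\theta$ is already a critical point and hence (Pohozaev on the star-shaped domain) lies in $\mathcal{G}_\theta$. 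Proposition~\ref{D} together with Lemma~\ref{H} and \eqref{ao} gives $m_\theta<\nu_{c,\theta}+\tfrac{\alpha+2}{2N+2\alpha}\mathcal{S}_H^{\frac{N+\alpha}{\alpha+2}}\theta^{\frac{2-N}{\alpha+2}}$ for $\theta$ close to $1^-$, a contradiction. Note that the $\nu_{c,\theta}$ term must appear on \emph{both} sides: in your sketch for the hypothetical case $u_\theta=0$ it is absent from the lower bound, and then the two inequalities $m_\theta\ge \tfrac{\alpha+2}{2N+2\alpha}\theta^{-\frac{N-2}{\alpha+2}}\mathcal{S}_H^{\frac{N+\alpha}{\alpha+2}}$ and $m_\theta<\nu_{c,\theta}+\tfrac{\alpha+2}{2N+2\alpha}\mathcal{S}_H^{\frac{N+\alpha}{\alpha+2}}$ do not actually contradict each other, since $\nu_{c,\theta}>0$ and $\theta^{-\frac{N-2}{\alpha+2}}\to 1$.
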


\begin{proof}
    We set 
    \begin{align*}
    A(u)=\frac{1}{2} \int_{\Omega}|\nabla u|^2 d x, \ \ B(u)=\frac{1}{2\cdot 2_\alpha^*}  {\mathbb{D}}(u^+)+\frac{a}{2p}{\mathbb{B}_p}(u^+).
    \end{align*}
    Together with Lemma \ref{G}, we have for almost every $\theta \in[1-\epsilon, 1]$, there exists a bounded (PS) sequence $\left\{u_n\right\} \subset S_c^{+}$ satisfying $E_\theta\left(u_n\right) \rightarrow m_\theta$ and $\left.E_\theta'\right|_{S_c^{+}}\left(u_n\right) \rightarrow 0$ as $n \rightarrow \infty$. From the boundedness of $\{u_n\}$, we may assume, going if necessary to a subsequence
    \begin{align*}
        & u_n \rightharpoonup u_\theta \quad \text { weakly in } H_0^1(\Omega), \\
        & u_n \rightharpoonup u_\theta \quad \text { weakly in } L^{2^*}(\Omega), \\
        & u_n \rightarrow u_\theta \quad \text { strongly in } L^r(\Omega) \text { for } 2<r<2^*, \\
        & u_n \rightarrow u_\theta \quad \text { a.e. on } \Omega.
    \end{align*}
    Next, we prove that $u_\theta \in S_c^{+}$ is a critical point of $E_\theta$ constrained on $S_c^{+}$. 
    
    Let $w_n=$ $u_n-u_\theta$. Since $\left(\left.E_\theta\right|_{S_c^{+}}\right)^{\prime}\left(u_n\right) \rightarrow 0$, there exists $\lambda_n$ such that $E_\theta^{\prime}\left(u_n\right)-\lambda_n u_n^{+} \rightarrow 0$. Let $\lambda_\theta$ be the Lagrange multiplier corresponding to $u_\theta$. Similar to the proof of Theorem \ref{C}, we have 
    $$
    \int_{\Omega}|\nabla w_n|^2 dx=\theta{\mathbb{D}}(w_n)+o_n(1).
    $$
    Hence, we assume that $\int_{\Omega}\left|\nabla w_n\right|^2 d x \rightarrow \theta l \geq 0,\ {\mathbb{D}}(w_n)\rightarrow l \geq 0$. From the definition of $\mathcal{S}_H$ we deduce that
    $$
    \int_{\Omega}\left|\nabla w_n\right|^2 d x \geq \mathcal{S}_H{\mathbb{D}}(w_n)^{\frac{1}{2_\alpha^*}},
    $$
    implying $\theta l \geq \mathcal{S}_H l^\frac{1}{2_\alpha^*}$. 

    \textbf{Claim:} $l=0$.
    
    Suppose on the contrary that $l>0$, then $ l \geq \mathcal{S}_H^{\ \ \frac{N+\alpha}{\alpha+2} }\theta^{-\frac{N+\alpha}{\alpha+2} }$, implying that
    $$
    E_\theta\left(w_n\right) \geq \frac{\alpha +2}{2N+2\alpha }\mathcal{S}_H^{\frac{N+\alpha }{\alpha+2}}\theta^{\frac{2-N}{\alpha+2}}+o_n(1).
    $$
    Further, using the Brézis-Lieb Lemma (see [Lemma \ref{L}]), one gets
    \begin{align}\label{aq1}
        E_\theta\left(u_n\right)=E_\theta\left(u_\theta\right)+E_\theta\left(w_n\right)+o_n(1) \geq \nu_{c, \theta}+\frac{\alpha +2}{2N+2\alpha }\mathcal{S}_H^{\frac{N+\alpha }{\alpha+2}}\theta^{\frac{2-N}{\alpha+2}}+o_n(1) .
    \end{align}
    On the other hand, using Proposition \ref{D} which will be proved in the next section, \eqref{ao} and Lemma \ref{H}, we have
    $$
    m_\theta=m_1+o_\theta(1)<E\left(u_c\right)+\frac{\alpha +2}{2N+2\alpha }\mathcal{S}_H^{\frac{N+\alpha }{\alpha+2}}+o_\theta(1)=\nu_{c, \theta}+\frac{\alpha +2}{2N+2\alpha }\mathcal{S}_H^{\frac{N+\alpha }{\alpha+2}}\theta^{\frac{2-N}{\alpha+2}}+o_\theta(1) .
    $$
    Taking $\epsilon$ small enough such that $\theta$ close to $1^{-}$ for all $\theta \in(1-\epsilon, 1)$, we obtain
    \begin{align}\label{aq2}
        m_\theta+\xi<\nu_{c, \theta}+\frac{\alpha +2}{2N+2\alpha }\mathcal{S}_H^{\frac{N+\alpha }{\alpha+2}}\theta^{\frac{2-N}{\alpha+2}},
    \end{align}
    for some $\xi>0$ independent of $\theta \in(1-\epsilon, 1)$. Combining \eqref{aq1}, \eqref{aq2} and that $\lim _{n \rightarrow \infty} E_\theta\left(u_n\right)=m_\theta$, we get a self-contradictory inequality
    $$
    \nu_{c, \theta}+\frac{\alpha +2}{2N+2\alpha }\mathcal{S}_H^{\frac{N+\alpha }{\alpha+2}}\theta^{\frac{2-N}{\alpha+2}} \leq m_\theta<m_\theta+\xi<\nu_{c, \theta}+\frac{\alpha +2}{2N+2\alpha }\mathcal{S}_H^{\frac{N+\alpha }{\alpha+2}}\theta^{\frac{2-N}{\alpha+2}}.
    $$
    This completes the claim. 
    
    Hence, $u_n \rightarrow u_\theta$ strongly in $H_0^1(\Omega)$ and $u_\theta$ is a critical point of $E_\theta$ constrained on $S_c^{+}$ at the level $m_\theta$. By Lagrange multiplier principle, $u_\theta$ satisfies
    $$
    -\Delta u_\theta  =\lambda u_\theta+\theta (I_\alpha*|u_\theta|^{2_\alpha^*})|u_\theta|^{2_\alpha^*-2}u_\theta+a\theta(I_\alpha*|u_\theta|^p)|u_\theta|^{p-2}u_\theta,\text { in }\Omega.
    $$
    Multiplying $u_\theta^{-}$ for the equation of $u_\theta$ and integrating on $\Omega$, we have
    $$
    \int_{\Omega}\left|\nabla u_\theta^{-}\right|^2 d x=0
    $$
    implying that $u_\theta \geq 0$. By strong maximum principle, $u_\theta>0$ and thus solves \eqref{aq} when $a\ge0$. 
    
    Using the Pohozaev identity and noticing that $\Omega$ is star-shaped with respect to $0$, we deduce $u_\theta \in \mathcal{G}_\theta$ and complete the proof.
\end{proof}

Now we are prepared to prove the existence of bounded PS sequence.

\begin{proposition}\label{C2}
    Under the assumptions of Theorem \ref{B}, there exists a $H_0^1(\Omega)$-bounded sequence ${u_n} \subset S_c^+$ such that and that $\underset{n\to \infty}{lim} E(u_n) = m(c)$ as $
    n\to \infty$.
\end{proposition}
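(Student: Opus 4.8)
The plan is to obtain the bounded Palais--Smale sequence for $E$ at the mountain pass level by the monotonicity trick, letting $\theta \to 1^-$ in the family $\{E_\theta\}$. By Proposition \ref{J} there is a set $\mathcal{N} \subset [1-\epsilon,1]$ of full Lebesgue measure such that for every $\theta \in \mathcal{N}$ there exists $u_\theta \in S_c^+\cap\mathcal{G}_\theta$ which is a critical point of $E_\theta$ constrained on $S_c^+$ with $E_\theta(u_\theta)=m_\theta$; in particular $u_\theta>0$ solves \eqref{aq} and $E_\theta'(u_\theta)=\lambda_\theta u_\theta^+$ for some $\lambda_\theta\in\mathbb{R}$. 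Since $\mathcal{N}$ is dense in $[1-\epsilon,1]$, pick $\theta_j\in\mathcal{N}$ with $\theta_j\nearrow 1$ and set $u_j:=u_{\theta_j}$, $\lambda_j:=\lambda_{\theta_j}$. I claim $\{u_j\}$ is the required sequence and that $m(c)=m_1$.

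\emph{Boundedness in $H_0^1(\Omega)$.} Because $\theta_j\in[\tfrac12,1]$, the coercivity estimates behind Proposition \ref{B}(i) hold on $\mathcal{G}_{\theta_j}$ with constants independent of $j$: when $a=0$, $u_j\in\mathcal{G}_{\theta_j}$ gives $E_{\theta_j}(u_j)>\big(\tfrac12-\tfrac{1}{2\cdot 2_\alpha^*}\big)\|\nabla u_j\|_2^2$, and when $a>0$, $2_\alpha^\sharp<p<2_\alpha^*$, rewriting as in \eqref{ah} gives $E_{\theta_j}(u_j)\ge\big(\tfrac12-\tfrac{1}{2p\eta_p}\big)\|\nabla u_j\|_2^2$ with $\tfrac12-\tfrac1{2p\eta_p}>0$ since $p\eta_p>1$. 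As $E_{\theta_j}(u_j)=m_{\theta_j}\to m_1$ by Lemma \ref{H}, $\{u_j\}$ is bounded in $H_0^1(\Omega)$; equivalently one may combine \eqref{aq} tested against $u_j$ with the Pohozaev identity (using $\sigma\cdot n>0$), the route indicated after Theorem \ref{I}. Consequently $\mathbb{D}(u_j^+)$ and $\mathbb{B}_p(u_j^+)$ are bounded, and testing \eqref{aq} against $u_j$ shows $\{\lambda_j\}$ is bounded as well.

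\emph{$\{u_j\}$ is a $(PS)$ sequence for $E$ at level $m_1$.} For every $w\in H_0^1(\Omega)$,
$$
E(w)-E_{\theta_j}(w)=(1-\theta_j)\Big(\tfrac{1}{2\cdot 2_\alpha^*}\mathbb{D}(w^+)+\tfrac{a}{2p}\mathbb{B}_p(w^+)\Big),
$$
and the analogous identity holds for the Fréchet derivatives. Evaluating at $w=u_j$ and using the boundedness of the nonlocal terms together with $\theta_j\to1$, we obtain $E(u_j)=E_{\theta_j}(u_j)+o_j(1)=m_{\theta_j}+o_j(1)\to m_1$. For the constrained gradient, by definition $\|E'|_{S_c^+}(u_j)\|_{H^{-1}}\le\|E'(u_j)-\lambda_j u_j^+\|_{H^{-1}}$, and
$$
E'(u_j)-\lambda_j u_j^+=\big(E'(u_j)-E_{\theta_j}'(u_j)\big)+\big(E_{\theta_j}'(u_j)-\lambda_j u_j^+\big)=(1-\theta_j)\,R_j+0,
$$
where $R_j$ is the derivative of the bracketed nonlocal functional at $u_j$, bounded in $H^{-1}$ by the previous step; hence $\|E'|_{S_c^+}(u_j)\|_{H^{-1}}\le C(1-\theta_j)\to0$. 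Thus $\{u_j\}$ is bounded in $H_0^1(\Omega)$, $E(u_j)\to m(c)=m_1$, and $E'|_{S_c^+}(u_j)\to0$, which is exactly the assertion.

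\emph{Where the difficulty lies.} The substantive work is all upstream. Producing the critical points $u_\theta$ for a.e.\ $\theta$ (Proposition \ref{J}) hinges on the strict energy gap $m_\theta<\nu_{c,\theta}+\frac{\alpha+2}{2N+2\alpha}\mathcal{S}_H^{(N+\alpha)/(\alpha+2)}\theta^{(2-N)/(\alpha+2)}$, which is what rules out loss of compactness at the critical exponent in the Brézis--Lieb splitting and forces the dimension/range restrictions on $(N,\alpha,p)$; and the continuity $m_\theta\to m_1$ (Lemma \ref{H}) together with $\nu_{c,\theta}\to\nu_c$ (Lemma \ref{F}) is needed to make that gap survive as $\theta\to1$. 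Granting those, the present proposition is bookkeeping: the only thing to verify is that the coercivity constants on $\mathcal{G}_\theta$ and the $O(1-\theta)$ remainders are uniform for $\theta\in[\tfrac12,1]$, which they plainly are.
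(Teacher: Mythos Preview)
Your proof is correct and follows essentially the same route as the paper: pick $\theta_j\nearrow 1$ from the full-measure set supplied by Proposition \ref{J}, use $u_{\theta_j}\in\mathcal{G}_{\theta_j}$ together with the coercivity inequality (your $\theta$-independent constant $\tfrac12-\tfrac{1}{2p\eta_p}$ is in fact slightly sharper than the paper's $\tfrac12-\tfrac{1}{2p\eta_p\theta_n}$, but either works), invoke Lemma \ref{H} to bound $m_{\theta_j}$, and then pass from $E_{\theta_j}$ to $E$ by the $O(1-\theta_j)$ remainder. The paper's argument is the same in structure and detail.
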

\begin{proof}
    By Proposition \ref{G}, we can take $\theta_n \rightarrow 1^{-}$and $u_n=u_{\theta_n}$ solving \eqref{aq}. When $a\ge0$. Using Lemma \ref{H} and choosing $n$ large such that $\theta_n$ is close to $1^{-}$ enough, we get $m_{\theta_n} \leq 2 m_1$.
    When $a=0$, noticing $u_{\theta_n} \in \mathcal{G}_{\theta_n}$, 
    \begin{align*}
        2 m_1 \geq m_{\theta_n}=E_{\theta_n}\left(u_n\right)>\left(\frac{1}{2}-\frac{1}{2\cdot 2_\alpha^* \theta_n}\right) \int_{\Omega}\left|\nabla u_n\right|^2 d x   
    \end{align*}
    holds through taking $n$ enough such that $\theta_n$ is close to $1^{-}$. When $a>0$ and $2_\alpha^\sharp<p<2_\alpha^*$, further using the fact that $u_{\theta_n} \in \mathcal{G}_{\theta_n}$, we obtain for $n$ large such that $p \eta_p \theta_n>1$,
    $$
    2 m_1 \geq m_{\theta_n}=E_{\theta_n}\left(u_n\right)>\left(\frac{1}{2}-\frac{1}{2p \eta_p \theta_n}\right) \int_{\Omega}\left|\nabla u_n\right|^2 dx.
    $$
    Hence, $\left\{u_n\right\}$ is $H_0^1(\Omega)$-bounded.  
    
    Since $E_{\theta_n}\left(u_n\right)=m_{\theta_n}$, by using Lemma \ref{H} we get
    $$
    \lim _{n \rightarrow \infty} E\left(u_n\right)=\lim _{n \rightarrow \infty} E_{\theta_n}\left(u_n\right)=\lim _{n \rightarrow \infty} m_{\theta_n}=m_1 .
    $$ 
    Note that $m_1=m(c)$. Moreover, using again that $\left\{u_n\right\}$ is $H_0^1(\Omega)$-bounded, we have
    $$
    \left.E'\right|_{S_c^{+}}\left(u_n\right)=\left.E_{\theta_n}'\right|_{S_c^{+}}\left(u_n\right)+o_n(1) \rightarrow 0 \text { as } n \rightarrow \infty .
    $$
    The proof is complete.
\end{proof}

\begin{proof}[proof of Theorem \ref{E}]
    By Proposition \ref{C2}, there is a $H_0^1(\Omega)$-bounded sequence $\left\{u_n\right\} \subset S_c^{+}$ such that $\lim _{n \rightarrow \infty} E\left(u_n\right)=m(c)$ and that $\left(\left.E\right|_{S_c^{+}}\right)^{\prime}\left(u_n\right) \rightarrow 0$ as $n \rightarrow \infty$. Up to a subsequence, we assume that
    \begin{align*}
        & u_n \rightharpoonup u_\theta \quad \text { weakly in } H_0^1(\Omega), \\
        & u_n \rightharpoonup u_\theta \quad \text { weakly in } L^{2^*}(\Omega), \\
        & u_n \rightarrow u_\theta \quad \text { strongly in } L^r(\Omega) \text { for } 2<r<2^*, \\
        & u_n \rightarrow u_\theta \quad \text { a.e. on } \Omega.
    \end{align*}
    Let $w_n=u_n-\tilde{u}_c$. Since $\left.E'\right|_{S_c^{+}}\left(u_n\right) \rightarrow 0$, there exists $\lambda_n$ such that $E^{\prime}\left(u_n\right)-\lambda_n u_n^{+} \rightarrow 0$. Let $\tilde{\lambda}_c$ be the Lagrange multiplier correspond to $\tilde{u}_c$. Similar to the proof to Theorem \ref{C}, we have
    $$
    \int_{\Omega}|\nabla w_n|^2 dx={\mathbb{D}}(w_n)+o_n(1).
    $$
    Hence, we assume that $\int_{\Omega}\left|\nabla w_n\right|^2 d x \rightarrow l \geq 0,\ {\mathbb{D}}(w_n)\to l\ge 0$. From the definition of $\mathcal{S}_H$ we deduce that
    $$
    \int_{\Omega}\left|\nabla w_n\right|^2 d x \geq \mathcal{S}_H{\mathbb{D}}(w_n)^{\frac{1}{2_\alpha^*}}
    $$
    implying $l \geq \mathcal{S}_H l^\frac{1}{2_\alpha^*}$. 
    
    \textbf{Claim:} $l=0$.

    Suppose on the contrary that $l>0$, then $l \geq \mathcal{S}_H^\frac{N+\alpha}{\alpha+2}$, implying that
    $$
    E\left(w_n\right) \geq \frac{\alpha+2}{2N+2\alpha}\mathcal{S}_H^{\ \ \frac{N+\alpha}{\alpha+2}}+o_n(1) .
    $$
    Further, using the Brézis-Lieb Lemma (Lemma \ref{L}) one gets
    \begin{align}
        E\left(u_n\right)=E\left(\tilde{u}_c\right)+E\left(w_n\right)+o_n(1) \geq \nu_c+\frac{\alpha+2}{2N+2\alpha}\mathcal{S}_H^{\ \ \frac{N+\alpha}{\alpha+2}}+o_n(1),\label{ar}
    \end{align}
    where $\nu_c$ is the energy level definited in Theorem \ref{C}. On the other hand, Proposition \ref{D} yields that
    \begin{align}
        E\left(u_n\right)+o_n(1)=m(c)<\nu_c+\frac{\alpha+2}{2N+2\alpha}\mathcal{S}_H^{\ \ \frac{N+\alpha}{\alpha+2}}.\label{as}
        \end{align}
    Combining \eqref{ar} and \eqref{as} 
    $$
    \nu_c+\frac{\alpha+2}{2N+2\alpha}\mathcal{S}_H^{\ \ \frac{N+\alpha}{\alpha+2}}+o_n(1)<m(c)<\nu_c+\frac{\alpha+2}{2N+2\alpha}\mathcal{S}_H^{\ \ \frac{N+\alpha}{\alpha+2}}
    $$
    holds when $n$ large. This is a contradiction. So $l=0$. Hence, $u_n \rightarrow u_\theta$ strongly in $H_0^1(\Omega)$. Then, quite similar to the proof to Proposition \ref{J} we can complete the proof.
\end{proof}

\section{The estimation of energy level for mountain path solutions} 

In this section, we will estimate the energy level of mountain path solutions, which is key in the  concentration compactness arguement appears in proof of Proposition \ref{J} and Theorem \ref{E}.

\begin{proposition}\label{D}
    Let $N\ge 3$,$a\ge 0$ and $2_\alpha^\sharp<p<2_\alpha^*$. Under the assumptions of Theorem \ref{B}, for $a=0$, we further assume that one of the following holds:
    \begin{itemize}
	    \item[$\bullet$] $N\in \left \{3,4\right \}$,
	    \item[$\bullet$] $N\ge 5$ and $ \alpha \in [N-2,N)$.
    \end{itemize}
    For $a>0$, we further assume that one of the following holds:
    \begin{itemize}
	    \item[$\bullet$] $N\ge 6$, $\alpha \in(0,N-2]$,
	    \item[$\bullet$] $N=5$, $ \alpha \in (0,5)$, and $ \max\left \{\frac{7+2\alpha}{6},2_\alpha^\sharp\right \}<p<2_\alpha^*$,
	    \item[$\bullet$] $N=4$, for $\alpha\in (0,2)$, $\frac{3+\alpha}{2}<p<2_\alpha^*$ and for $\alpha\in [2,4)$, $2_\alpha^\sharp<p<2_\alpha^*$,
	    \item[$\bullet$] $N=3$, for $\alpha\in (0,1)$, $\frac{5+2\alpha}{2}<p<2_\alpha^*$ and for $\alpha\in [1,3)$, $2_\alpha^\sharp<p<2_\alpha^*$.
    \end{itemize}
    Then 
    \begin{align}
        m(c)\le \nu_c+\frac{\alpha+2}{2N+2\alpha}\mathcal{S}_H^\frac{N+\alpha}{\alpha+2},\label{at} 
    \end{align}
    where $\nu_c$ is the energy level of the normalized ground state defined by Theorem \ref{C}.
\end{proposition}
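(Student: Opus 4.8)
The plan is the standard Brézis--Nirenberg one: for each small $\varepsilon>0$ I would construct an explicit competitor path $\gamma_\varepsilon\in\Gamma$ joining $u_c$ to the prescribed endpoint $v\in S_c^{+}\setminus\mathcal{G}$ along which
\[
\sup_{t\in[0,1]}E(\gamma_\varepsilon(t))\le \nu_c+\frac{\alpha+2}{2N+2\alpha}\,\mathcal{S}_H^{\frac{N+\alpha}{\alpha+2}}+o_\varepsilon(1),
\]
and then let $\varepsilon\to 0$. The building block is the family of extremals for $\mathcal{S}_H$: let $U>0$ be the radial minimizer, $U_\varepsilon(x)=\varepsilon^{-(N-2)/2}U(x/\varepsilon)$, fix an interior point $x_0\in\Omega$ with $u_c(x_0)>0$, pick $\chi\in C_c^\infty(\Omega)$ with $\chi\equiv 1$ near $x_0$, and set $\varphi_\varepsilon(x)=\chi(x)U_\varepsilon(x-x_0)$. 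The first step is to record the familiar asymptotics as $\varepsilon\to 0$: $\|\nabla\varphi_\varepsilon\|_2^2\,\mathbb{D}(\varphi_\varepsilon)^{-1/2_\alpha^*}=\mathcal{S}_H+O(\varepsilon^{N-2})$; $\|\varphi_\varepsilon\|_2^2$ of order $\varepsilon^2$, $\varepsilon^2|\log\varepsilon|$, $\varepsilon$ for $N\ge 5$, $N=4$, $N=3$; $\int_\Omega u_c\varphi_\varepsilon$ and $\int_\Omega\nabla u_c\cdot\nabla\varphi_\varepsilon$ of order $\varepsilon^{(N-2)/2}$ (up to logarithms); the critical interaction terms arising in the expansion of $\mathbb{D}\big((u_c+r\varphi_\varepsilon)^+\big)$, all of which are positive and of an order dictated by whether the corresponding integrals of $U$ converge (this is where $\alpha$ versus $N-2$ enters); and, when $a>0$, the subcritical self-energy $\mathbb{B}_p(\varphi_\varepsilon)\sim\varepsilon^{N+\alpha-(N-2)p}$ together with the $\mathbb{B}_p$-interactions (again modified by truncation when the integral of $U$ diverges).

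The second step is the path. For $r\ge 0$ put $z_{\varepsilon,r}=\sqrt c\,(u_c+r\varphi_\varepsilon)\big/\|u_c+r\varphi_\varepsilon\|_2\in S_c^{+}$; this is a continuous curve with $z_{\varepsilon,0}=u_c$, and since after normalisation the critical term dominates, $z_{\varepsilon,r}\notin\mathcal{G}$ for $r$ large and $E(z_{\varepsilon,r})\to-\infty$ as $r\to\infty$. Phase one of $\gamma_\varepsilon$ is $r\mapsto z_{\varepsilon,r}$ on a bounded interval $[0,T]$ with $T$ large; phase two connects $z_{\varepsilon,T}$ to the prescribed $v$ within $S_c^{+}\setminus\mathcal{G}$ (a standard deformation/connectedness argument: $E$ is unbounded below there, the dilation $w\mapsto w^{t}$ is non-increasing past the pass, and the portion of $S_c^{+}\setminus\mathcal{G}$ below a fixed level is arcwise connected), without the energy exceeding $\max\{\sup_{r}E(z_{\varepsilon,r}),E(v)\}$. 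Since $E(v)<\nu_c\le\sup_{r}E(z_{\varepsilon,r})$, this yields $m(c)\le \sup_{r\ge 0}E(z_{\varepsilon,r})$.

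The third step is the energy estimate. Because $u_c$ is a critical point of $E|_{S_c^{+}}$, the function $r\mapsto E(z_{\varepsilon,r})$ has no first-order term at $r=0$ and initially increases like $\tfrac{r^{2}}{2}\|\nabla U\|_2^{2}\,(1+o_\varepsilon(1))$; the dominant balance governing its maximum is between this kinetic term and the defocusing critical term $-\tfrac{r^{2\cdot2_\alpha^*}}{2\cdot2_\alpha^*}\mathbb{D}(\varphi_\varepsilon)$, whose optimisation over $r$ produces exactly $\tfrac{\alpha+2}{2N+2\alpha}\big(\|\nabla\varphi_\varepsilon\|_2^{2}\,\mathbb{D}(\varphi_\varepsilon)^{-1/2_\alpha^*}\big)^{\frac{N+\alpha}{\alpha+2}}=\tfrac{\alpha+2}{2N+2\alpha}\mathcal{S}_H^{\frac{N+\alpha}{\alpha+2}}+O(\varepsilon^{N-2})$. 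Everything else is error: the $O(\varepsilon^{N-2})$ truncation error; the corrections coming from forcing the curve onto $S_c^{+}$ (which, after using the equation for $u_c$, reduce essentially to $\lambda_c$-weighted multiples of $\|\varphi_\varepsilon\|_2^{2}$ and of the interaction integrals); the positive interaction terms between $u_c$ and the concentrating bubble produced by $\mathbb{D}$; and, for $a>0$, the negative contribution of order $\mathbb{B}_p(\varphi_\varepsilon)$. One then shows the net of these is $\le o_\varepsilon(1)$ — in fact strictly negative for $\varepsilon$ small, which also gives the strict inequality actually used in the compactness arguments of Proposition~\ref{J} and Theorem~\ref{E} — and fixes $\varepsilon$ small.

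The main obstacle is precisely this last step: a delicate competition of powers of $\varepsilon$ with their signs, which is exactly what the dimensional and $p$-range hypotheses are calibrated to control. When $a=0$ there is no subcritical term to help, so the negative $\mathbb{D}$-interaction must absorb the truncation and $S_c^{+}$-corrections: for $N\ge 5$ this forces $\alpha\ge N-2$ (so that the relevant integral of $U$ converges and the interaction is of the sufficiently low order $\varepsilon^{(N-2)/2}$), while for $N\in\{3,4\}$ the estimate closes for every $\alpha\in(0,N)$. When $a>0$, the negative term of order $\mathbb{B}_p(\varphi_\varepsilon)$ carries the argument, and in the low-dimensional cases ($N=5$; $N=4$ with $\alpha\in(0,2)$; $N=3$ with $\alpha\in(0,1)$) it must beat the larger $S_c^{+}$-corrections present there, which forces the exponent $N+\alpha-(N-2)p$ to be sufficiently small, i.e.\ $p$ into the stated ranges, whereas for $N\ge 6$ (any $\alpha$) and the remaining four- and three-dimensional cases the full range $2_\alpha^\sharp<p<2_\alpha^*$ suffices. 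I would therefore split the proof along exactly the case list in the statement, in each regime isolate the dominant favourable term, verify that it controls the errors, and conclude.
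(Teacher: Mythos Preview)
Your strategy is the same Brézis--Nirenberg scheme the paper follows, but two technical choices differ. First, you project onto $S_c^{+}$ by the multiplicative rescaling $w\mapsto\sqrt{c}\,w/\|w\|_2$, whereas the paper exploits the star-shapedness of $\Omega$ and projects by the \emph{dilation} $w\mapsto\beta^{(N-2)/2}w(\beta\,\cdot)$ with $\beta=\|w\|_2/\sqrt{c}$. The dilation leaves both $\|\nabla\cdot\|_2^2$ and $\mathbb{D}(\cdot)$ invariant, so the only normalisation correction is the factor $\beta^{2p(\eta_p-1)}$ in front of $\mathbb{B}_p$; this is why the paper's energy expansion is short, while yours will carry extra cross-terms from the kinetic and critical parts that you then have to cancel using the equation for $u_c$. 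Second, your phase-two step (connecting $z_{\varepsilon,T}$ to the prescribed $v$ by an arcwise-connectedness claim inside $S_c^{+}\setminus\mathcal{G}$) is not justified and is not standard on a bounded domain; the paper again uses dilation, continuing from $W_{\varepsilon,\widehat{s}}$ along $k\mapsto k^{N/2}W_{\varepsilon,\widehat{s}}(k\,\cdot)$, and simply \emph{takes} the endpoint of this explicit curve as the $v$ in the definition of $\Gamma$. Adopting the dilation both for the $L^2$-projection and for phase two would remove this gap and streamline your computation.

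One point in your case analysis needs correction: for $a>0$ and $N\ge 6$ you write ``any $\alpha$'', but the statement restricts to $\alpha\in(0,N-2]$, and indeed the paper notes that its method fails for $\alpha>N-2$ there. Conversely, in the ranges $\alpha\ge N-2$ (so $p>2_\alpha^\sharp\ge 2$) the paper does \emph{not} rely on $\mathbb{B}_p(\varphi_\varepsilon)$ as the dominant negative contribution; instead, because $p\ge 2$ one can drop the $\mathbb{B}_p$-cross term entirely and the critical interaction $\int_\Omega(I_\alpha*|v_\varepsilon|^{2_\alpha^*})|v_\varepsilon|^{2_\alpha^*-1}u_c\gtrsim\varepsilon^{(N-2)/2}$ carries the estimate, with the competing positive $\lambda_c\int_\Omega u_cv_\varepsilon$ made small by shrinking the support radius $R$ of the cut-off. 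You should separate these two mechanisms explicitly rather than attributing the whole $a>0$ case to the subcritical self-energy.
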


Let  $\xi \in C_{0}^{\infty}(\Omega)$  be the radial function, such that  $\xi(x) \equiv 1$  for  $0 \leq|x| \leq R,\ 0 \leq   \xi(x) \leq 1$  for $R \leq|x| \leq 2 R, \xi(x) \equiv 0$ for $|x| \geq 2 R$, where $B_{2 R} \subset \Omega$. Take $v_{\epsilon}=\xi U_{\epsilon}$  where
$$
U_{\epsilon}=[N(N-2)]^{(N-2) / 4}\left(\frac{\epsilon}{\epsilon^{2}+|x|^{2}}\right)^{(N-2) / 2}.
$$

Before proceeding with the proof of Proposition \ref{D}, we need the following lemmas.

\begin{lemma}(\cite[Lemma 7.1]{MR4476243})\label{K} 
    Let $N\ge3$, and $\omega$ be the area of the unit sphere in $\mathbb{R}^N$. Then it holds that    
    \begin{align*}
        \left \| \nabla v_{\epsilon} \right \|_2^2=\mathcal{S}^{\frac{N}{2}}+O\left(\epsilon^{N-2}\right),\ \ \left \| v_{\epsilon} \right \|_{2^*}^{2^*}=\mathcal{S}^{\frac{N}{2}}+O\left(\epsilon^{N}\right).
    \end{align*}

    \begin{equation} \nonumber
        \left\|v_{\epsilon}\right\|_{q}^{q}=
        \left\{\begin{aligned}
            &K \varepsilon^{N-\frac{(N-2)}{2} q}+o\left(\varepsilon^{N-\frac{(N-2)}{2} q}\right),  \text { if } \frac{N}{N-2}<q< 2^{*}, \\
            &\omega \varepsilon^{\frac{N}{2}}|\log \varepsilon|+O\left(\varepsilon^{\frac{N}{2}}\right),  \text { if } q=\frac{N}{N-2}, \\
            &\omega\left(\int_{0}^{2} \frac{\xi^{q}(r)}{r^{(N-2) q-(N-1)}}\right) \varepsilon^{\frac{q(N-2)}{2}}+o\left(\varepsilon^{\frac{q(N-2)}{2}}\right),  \text { if }  1\le q< \frac{N}{N-2}.
        \end{aligned}
        \right.
    \end{equation}
\end{lemma}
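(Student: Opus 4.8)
The plan is to prove all four asymptotics by direct computation from the explicit form of $U_\epsilon$, using two ingredients: first, that the Aubin--Talenti instanton realizes the Sobolev constant for every $\epsilon$, so that $\int_{\mathbb{R}^N}|\nabla U_\epsilon|^2 = \int_{\mathbb{R}^N}U_\epsilon^{2^*} = \mathcal{S}^{N/2}$ is $\epsilon$-independent; and second, the pointwise decay bounds $U_\epsilon(x)\le C\epsilon^{(N-2)/2}|x|^{-(N-2)}$ and $|\nabla U_\epsilon(x)|\le C\epsilon^{(N-2)/2}|x|^{-(N-1)}$ valid on $\{|x|\ge R\}$. For the gradient I would expand $|\nabla v_\epsilon|^2=\xi^2|\nabla U_\epsilon|^2+2\xi U_\epsilon\,\nabla\xi\cdot\nabla U_\epsilon+U_\epsilon^2|\nabla\xi|^2$ and integrate. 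The main term is $\int\xi^2|\nabla U_\epsilon|^2=\mathcal{S}^{N/2}-\int(1-\xi^2)|\nabla U_\epsilon|^2$, where the correction is supported in $\{|x|\ge R\}$ and bounded by $C\epsilon^{N-2}\int_{R}^{\infty}r^{-(2N-2)}r^{N-1}\,dr=O(\epsilon^{N-2})$. The cross term and the $|\nabla\xi|^2$ term live in the annulus $R\le|x|\le 2R$, where both $U_\epsilon$ and $\nabla U_\epsilon$ are $O(\epsilon^{(N-2)/2})$, so each is $O(\epsilon^{N-2})$. This yields $\|\nabla v_\epsilon\|_2^2=\mathcal{S}^{N/2}+O(\epsilon^{N-2})$. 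The $L^{2^*}$ estimate is the same in spirit: $\int\xi^{2^*}U_\epsilon^{2^*}=\mathcal{S}^{N/2}-\int(1-\xi^{2^*})U_\epsilon^{2^*}$, and on $\{|x|\ge R\}$ one has $U_\epsilon^{2^*}\le C\epsilon^{N}|x|^{-2N}$, whence the correction is $O(\epsilon^{N})$.

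For the intermediate norms I would factor out the constant $A_q:=[N(N-2)]^{q(N-2)/4}$ and write $\|v_\epsilon\|_q^q=A_q\,\epsilon^{q(N-2)/2}\int_{\mathbb{R}^N}\xi^q(x)(\epsilon^2+|x|^2)^{-q(N-2)/2}\,dx$, then rescale $x=\epsilon y$ to obtain $A_q\,\epsilon^{N-q(N-2)/2}\int \xi^q(\epsilon y)(1+|y|^2)^{-q(N-2)/2}\,dy$. The entire dichotomy is governed by whether $(1+|y|^2)^{-q(N-2)/2}$ is integrable at infinity, i.e.\ whether $q(N-2)\gtrless N$, which is exactly the threshold $q=N/(N-2)$. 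When $N/(N-2)<q<2^{*}$ the limiting integral $\int_{\mathbb{R}^N}(1+|y|^2)^{-q(N-2)/2}\,dy$ converges, $\xi^q(\epsilon y)\to 1$ a.e., and dominated convergence produces a finite constant $K$, giving $\|v_\epsilon\|_q^q=K\epsilon^{N-q(N-2)/2}+o(\epsilon^{N-q(N-2)/2})$. In the subcritical range $1\le q<N/(N-2)$ it is cleaner not to rescale: since $q(N-2)<N$ makes $|x|^{-q(N-2)}$ locally integrable and $\xi$ is compactly supported, dominated convergence gives $\int\xi^q(x)(\epsilon^2+|x|^2)^{-q(N-2)/2}\,dx\to\int\xi^q(x)|x|^{-q(N-2)}\,dx=\omega\int_0^{2R}\xi^q(r)\,r^{N-1-q(N-2)}\,dr$, so the leading order is $\epsilon^{q(N-2)/2}$ with the stated integral coefficient.

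The borderline case $q=N/(N-2)$ is the main obstacle. Here the rescaled integral is effectively over $\{|y|\le 2R/\epsilon\}$ with integrand $(1+|y|^2)^{-N/2}\sim|y|^{-N}$, producing a logarithmic divergence: $\int_{|y|\le 2R/\epsilon}(1+|y|^2)^{-N/2}\,dy=\omega\log(1/\epsilon)\bigl(1+o(1)\bigr)$ up to a bounded remainder. The delicate part is to extract the logarithm cleanly, showing that the contribution from the bounded region $|y|\lesssim 1$ and from the transition annulus where $0<\xi<1$ are both $O(1)$ relative to the $\log$, so that the coefficient of $|\log\epsilon|$ is exactly the surface measure $\omega$ and the genuine remainder is $O(\epsilon^{N/2})$ rather than $o(\epsilon^{N/2}|\log\epsilon|)$; restoring the prefactor then gives $\omega\epsilon^{N/2}|\log\epsilon|+O(\epsilon^{N/2})$. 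A secondary bookkeeping point, which I would verify at the end, is the precise matching of the constant prefactor $A_q$: one checks that with the chosen normalization of $U_\epsilon$ the multiplicative constants collapse to the exact forms displayed in the statement. These verifications aside, all the estimates reduce to elementary radial integrals of the form $\int r^{N-1}(1+r^2)^{-s}\,dr$, and the essential content is simply the three-fold split according to the sign of $N-q(N-2)$.
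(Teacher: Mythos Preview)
The paper does not prove this lemma; it is simply quoted from \cite[Lemma~7.1]{MR4476243} without argument. Your proposal is the standard Brezis--Nirenberg computation and is correct in outline: split off the exact instanton contribution, control the cut-off corrections by the pointwise decay of $U_\epsilon$ and $\nabla U_\epsilon$ on $\{|x|\ge R\}$, and for the $L^q$ norms read off the trichotomy from the integrability of $(1+|y|^2)^{-q(N-2)/2}$ after rescaling. The only loose end you flag yourself is the exact matching of multiplicative constants (the factor $A_q=[N(N-2)]^{q(N-2)/4}$ versus the bare $\omega$ in the statement, and the upper limit $2$ rather than $2R$ in the subcritical integral); these are normalization conventions in the cited source and do not affect the orders in $\epsilon$, which is all that is used downstream in the paper.
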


\begin{lemma}(Lemma 3.13 in \cite{MR4741627})\label{M}
If $N\ge3$, $\alpha \in (0,N)$. Then it holds that
\begin{align*}
    {\mathbb{B}_p}(v_\epsilon) \ge O(\epsilon^{2q(1-\eta_q)})
\end{align*}
and
\begin{align*}
    (\mathcal{A}(N,\alpha )\mathcal{C}(N,\alpha))^{\frac{N}{2}}\mathcal{S}_H^{\frac{N+\alpha }{2}}-O(\epsilon^\frac{N+\alpha }{2})\le {\mathbb{D}}(v_\epsilon)\le (\mathcal{A}(N,\alpha )\mathcal{C}(N,\alpha))^{\frac{N}{2}}\mathcal{S}_H^{\frac{N+\alpha }{2}}+O(\epsilon^{N-2}).
\end{align*}
\end{lemma}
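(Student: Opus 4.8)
The plan is to treat the two assertions separately, in each case comparing the truncated bubble $v_\epsilon=\xi U_\epsilon$ with the full bubble $U_\epsilon$ on $\mathbb{R}^N$ and then controlling the error caused by $\xi$. Two structural facts drive everything. First, $0\le v_\epsilon\le U_\epsilon$ pointwise, with $v_\epsilon\equiv U_\epsilon$ on $B_R$ and $\operatorname{supp}v_\epsilon\subset B_{2R}\subset\Omega$. Second, $U_\epsilon$ is (a dilation of) the extremal for $\mathcal{S}_H$, so the full-space critical Choquard energy $\mathbb{D}(U_\epsilon)=\mathcal{A}\iint_{\mathbb{R}^N\times\mathbb{R}^N}|U_\epsilon(x)|^{2_\alpha^*}|U_\epsilon(y)|^{2_\alpha^*}|x-y|^{-(N-\alpha)}\,dx\,dy$ is \emph{independent of} $\epsilon$ (a scaling substitution $x=\epsilon s$ produces the exponent $-(N+\alpha)+2N-(N-\alpha)=0$) and equals $[\mathcal{A}\mathcal{C}_H]^{N/2}\mathcal{S}_H^{(N+\alpha)/2}$. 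The last identity comes from $\|\nabla U_\epsilon\|_2^2=\mathcal{S}^{N/2}$, the relation $\mathcal{S}_H=\|\nabla U_\epsilon\|_2^2/\mathbb{D}(U_\epsilon)^{1/2_\alpha^*}$, and the formula $\mathcal{S}_H=\mathcal{S}[\mathcal{A}\mathcal{C}_H]^{-(N-2)/(N+\alpha)}$ recorded in the introduction.

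For the lower bound on $\mathbb{B}_p(v_\epsilon)$ I would simply discard the transition region: since $v_\epsilon\ge 0$ and $v_\epsilon=U_\epsilon$ on $B_R$,
\[
\mathbb{B}_p(v_\epsilon)\ge \mathcal{A}\iint_{B_R\times B_R}\frac{U_\epsilon(x)^p\,U_\epsilon(y)^p}{|x-y|^{N-\alpha}}\,dx\,dy.
\]
Rescaling $x=\epsilon s,\ y=\epsilon t$ and using $U_\epsilon(\epsilon s)=\epsilon^{-(N-2)/2}U_1(s)$ turns this into $\epsilon^{\,2p(1-\eta_p)}\,\mathcal{A}\iint_{B_{R/\epsilon}\times B_{R/\epsilon}}U_1(s)^pU_1(t)^p|s-t|^{-(N-\alpha)}\,ds\,dt$, the exponent $-(N-2)p+2N-(N-\alpha)$ being exactly $2p(1-\eta_p)$ by the definition of $\eta_p$. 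As $\epsilon\to0$ the remaining integral is monotone increasing in $R/\epsilon$ and hence bounded below by its (fixed, positive) value on $B_1\times B_1$; this yields $\mathbb{B}_p(v_\epsilon)\ge C\,\epsilon^{2p(1-\eta_p)}$, the asserted bound.

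For $\mathbb{D}(v_\epsilon)$ the upper bound follows from the defining inequality $\mathbb{D}(u)\le\mathcal{S}_H^{-2_\alpha^*}\|\nabla u\|_2^{2\cdot 2_\alpha^*}$ of $\mathcal{S}_H$ (cf. \eqref{ag}) combined with the gradient asymptotics of Lemma \ref{K}: $\mathbb{D}(v_\epsilon)\le\mathcal{S}_H^{-2_\alpha^*}\big(\mathcal{S}^{N/2}+O(\epsilon^{N-2})\big)^{2_\alpha^*}=[\mathcal{A}\mathcal{C}_H]^{N/2}\mathcal{S}_H^{(N+\alpha)/2}+O(\epsilon^{N-2})$, the constant being simplified by the same $\mathcal{S}_H$–$\mathcal{S}$ relation. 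For the lower bound I would estimate the truncation defect $\mathbb{D}(U_\epsilon)-\mathbb{D}(v_\epsilon)$. Writing $g_\epsilon=U_\epsilon^{2_\alpha^*}$ and $f_\epsilon=v_\epsilon^{2_\alpha^*}=\xi^{2_\alpha^*}U_\epsilon^{2_\alpha^*}$, the elementary inequality $g_\epsilon(x)g_\epsilon(y)-f_\epsilon(x)f_\epsilon(y)\le 2g_\epsilon(x)(g_\epsilon-f_\epsilon)(y)$ (valid since $0\le f_\epsilon\le g_\epsilon$), followed by the Hardy--Littlewood--Sobolev inequality of Lemma \ref{A}, gives
\[
\mathbb{D}(U_\epsilon)-\mathbb{D}(v_\epsilon)\le 2\mathcal{A}\,\mathcal{C}_H\,\|g_\epsilon\|_{\frac{2N}{N+\alpha}}\,\big\|(1-\xi^{2_\alpha^*})U_\epsilon^{2_\alpha^*}\big\|_{\frac{2N}{N+\alpha}}.
\]
Here $\|g_\epsilon\|_{2N/(N+\alpha)}=\|U_\epsilon\|_{2^*}^{2_\alpha^*}$ is $\epsilon$-independent, while $1-\xi^{2_\alpha^*}$ is supported in $\{|x|\ge R\}$, so $\big\|(1-\xi^{2_\alpha^*})U_\epsilon^{2_\alpha^*}\big\|_{2N/(N+\alpha)}^{2N/(N+\alpha)}\le\int_{|x|\ge R}U_\epsilon^{2^*}\,dx=O(\epsilon^N)$ via the decay $U_\epsilon^{2^*}(x)\sim\epsilon^N|x|^{-2N}$. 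Thus the defect is $O(\epsilon^{(N+\alpha)/2})$ and $\mathbb{D}(v_\epsilon)\ge[\mathcal{A}\mathcal{C}_H]^{N/2}\mathcal{S}_H^{(N+\alpha)/2}-O(\epsilon^{(N+\alpha)/2})$.

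The main obstacle is the lower bound for $\mathbb{D}(v_\epsilon)$, where two things must be done carefully: identifying the leading constant as the scale-invariant full-space energy of the $\mathcal{S}_H$-extremal, and converting the lost tail mass into the correct power of $\epsilon$. The delicate point is the $L^{2N/(N+\alpha)}$-estimate of the truncated tail, in which the exponent $\tfrac{N+\alpha}{2}=N\cdot\tfrac{N+\alpha}{2N}$ arises precisely from integrating the decay $|x|^{-2N}$ of $U_\epsilon^{2^*}$ over $\{|x|\ge R\}$; the two-factor splitting together with Hardy--Littlewood--Sobolev is what transfers this tail mass into the defect, and keeping track of which factor is $\epsilon$-independent is what prevents overcounting. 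The $\mathbb{B}_p$ estimate and the $\mathbb{D}$ upper bound, by contrast, are routine once the scaling of $U_\epsilon$ and Lemma \ref{K} are in hand.
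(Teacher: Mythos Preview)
The paper does not prove this lemma at all; it simply quotes it from \cite{MR4741627}. Your argument is a correct, self-contained proof: the $\mathbb{B}_p$ lower bound by restricting to $B_R\times B_R$ and rescaling (the exponent $N+\alpha-(N-2)p$ is indeed $2p(1-\eta_p)$), the $\mathbb{D}$ upper bound from the definition of $\mathcal{S}_H$ together with Lemma~\ref{K}, and the $\mathbb{D}$ lower bound by estimating the truncation defect $\mathbb{D}(U_\epsilon)-\mathbb{D}(v_\epsilon)$ via HLS and the tail $\int_{|x|\ge R}U_\epsilon^{2^*}=O(\epsilon^N)$ all go through exactly as you describe, including the identification of the leading constant through $\mathcal{S}_H=\mathcal{S}[\mathcal{A}\mathcal{C}_H]^{-(N-2)/(N+\alpha)}$.

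Two small remarks. First, your pointwise inequality $g(x)g(y)-f(x)f(y)\le 2g(x)(g-f)(y)$ is not literally true pointwise (take $f(x)=0$, $f(y)=g(y)$), but the symmetrised version $g(x)(g-f)(y)+g(y)(g-f)(x)$ is, and after integration against the symmetric kernel $|x-y|^{-(N-\alpha)}$ the two terms coincide, so the factor $2$ and the conclusion are correct. Second, the ``$q$'' in the statement is a typo for $p$; you tacitly fixed this.
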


\begin{lemma}(Lemma 5.4 in \cite{song2024})\label{N}
    For any $0<\phi \in L_{\text {loc }}^{\infty}(\Omega)$, we have as $\epsilon \rightarrow 0^{+}$,
    $$
    \int_{\Omega} \phi v_{\epsilon} d x \leq 2[N(N-2)]^{(N-2) / 4} \sup _{B_{\delta}} \phi \omega_{N} R^{2} \epsilon^{(N-2) / 2}+o\left(\epsilon^{(N-2) / 2}\right).
    $$
\end{lemma}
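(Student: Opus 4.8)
The plan is to exploit that $v_\epsilon = \xi U_\epsilon$ is supported in $\overline{B_{2R}}$ and is dominated pointwise by the Aubin--Talenti bubble $U_\epsilon$ (since $0 \le \xi \le 1$), reducing the estimate to an elementary radial integral. First I would bound, using $\phi > 0$, $\xi \le 1$, and that $\overline{B_{2R}} \subset \Omega$ is compact so that $\sup_{B_{2R}} \phi < \infty$ because $\phi \in L^\infty_{\mathrm{loc}}(\Omega)$,
$$
\int_\Omega \phi\, v_\epsilon\, dx = \int_{B_{2R}} \phi\, \xi\, U_\epsilon\, dx \le \Big(\sup_{B_{2R}} \phi\Big) \int_{B_{2R}} U_\epsilon\, dx .
$$
Here $B_\delta$ in the statement is taken to be $B_{2R}$ (indeed any ball containing the support of $v_\epsilon$ works), so that $\sup_{B_\delta}\phi = \sup_{B_{2R}}\phi$ enters the final constant.

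Next I would evaluate $\int_{B_{2R}} U_\epsilon\, dx$ explicitly. Pulling out the normalization constant and the factor $\epsilon^{(N-2)/2}$ and passing to polar coordinates, with $\omega_N$ the surface area of the unit sphere in $\mathbb{R}^N$, this equals
$$
[N(N-2)]^{(N-2)/4}\, \epsilon^{(N-2)/2}\, \omega_N \int_0^{2R} \frac{r^{N-1}}{(\epsilon^2 + r^2)^{(N-2)/2}}\, dr .
$$
The crux is then the behaviour of the one-dimensional integral $I_\epsilon := \int_0^{2R} r^{N-1}(\epsilon^2 + r^2)^{-(N-2)/2}\, dr$ as $\epsilon \to 0^+$. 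For each fixed $r \in (0, 2R]$ the integrand increases monotonically to $r^{N-1}/r^{N-2} = r$ as $\epsilon \downarrow 0$, and the limit $r$ is integrable on $[0, 2R]$; hence monotone (equivalently, dominated) convergence gives $I_\epsilon \to \int_0^{2R} r\, dr = 2R^2$. As a sanity check one may split $[0, 2R] = [0, \epsilon] \cup [\epsilon, 2R]$: the inner piece contributes $O(\epsilon^2)$ while the outer piece is $2R^2 + o(1)$, confirming that the $L^1$-mass of $U_\epsilon$ is spread over the ball (it does not concentrate at the origin, since the weight $r^{N-1}$ dominates).

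Combining the two steps, and noting the $o(1)$ error from $I_\epsilon$ multiplies $\epsilon^{(N-2)/2}$, I obtain $\int_{B_{2R}} U_\epsilon\, dx = 2[N(N-2)]^{(N-2)/4}\, \omega_N R^2 \epsilon^{(N-2)/2} + o(\epsilon^{(N-2)/2})$, and multiplying by $\sup_{B_{2R}}\phi$ yields precisely the asserted bound. I do not expect a genuine obstacle here: the only points requiring care are the justification of the limit $I_\epsilon \to 2R^2$ (integrability of the limiting integrand near $r = 0$ together with the monotonicity of the family in $\epsilon$) and the bookkeeping that keeps the remainder at order $o(\epsilon^{(N-2)/2})$ rather than letting it contaminate the leading constant.
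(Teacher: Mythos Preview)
Your argument is correct. The paper does not supply its own proof of this lemma; it is merely quoted from \cite{song2024}, so there is nothing to compare against beyond noting that your approach---bounding $\phi$ by its supremum on the support of $v_\epsilon$ and then computing $\int_{B_{2R}} U_\epsilon\,dx$ in polar coordinates via monotone convergence to obtain the leading constant $2R^2$---is exactly the natural (and presumably the original) route. Your identification of $B_\delta$ with $B_{2R}$ is the right reading of the statement, and the monotone convergence step is cleanly justified since $r^{N-1}(\epsilon^2+r^2)^{-(N-2)/2}\uparrow r$ on $(0,2R]$ with integrable limit.
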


\begin{lemma}\label{O}
    For any $0<\phi \in L_{\text {loc }}^{\infty}(\Omega)$, we have as $\epsilon \rightarrow 0^{+}$,
    $$
    \int_{\Omega}\int_{\Omega}\frac{\phi(x)\left |v_{\epsilon}(y)\right | ^{2_\alpha^{*}-1}\left |v_{\epsilon}(y)\right | ^{2_\alpha^{*}}}{\left | x-y \right |^{N-\alpha} }dxdy\ge O\left(\epsilon^{\frac{N-2}{2}}\right).
    $$
\end{lemma}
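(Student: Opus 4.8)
The plan is to use Fubini's theorem to collapse the double integral onto a single power of $v_\epsilon$ tested against a Riesz potential of $\phi$, and then to localise near the concentration point of $v_\epsilon$ (the origin). Write
\[
I_\epsilon:=\int_{\Omega}\int_{\Omega}\frac{\phi(x)\,|v_\epsilon(y)|^{2_\alpha^*-1}\,|v_\epsilon(y)|^{2_\alpha^*}}{|x-y|^{N-\alpha}}\,dx\,dy=\int_{\Omega}g_\phi(y)\,|v_\epsilon(y)|^{2\cdot 2_\alpha^*-1}\,dy,\qquad g_\phi(y):=\int_{\Omega}\frac{\phi(x)}{|x-y|^{N-\alpha}}\,dx .
\]
I would fix $\delta\in(0,R]$ so that $\overline{B_\delta}\subset B_{2R}\subset\subset\Omega$ and $\xi\equiv1$ on $\overline{B_\delta}$, whence $v_\epsilon=U_\epsilon$ on $B_\delta$. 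Since all integrands are nonnegative, we may discard the part of the domain outside $B_\delta$; and since $|x-y|<2\delta$ for $x,y\in B_\delta$, we get $g_\phi(y)\ge (2\delta)^{-(N-\alpha)}\int_{B_\delta}\phi\,dx=:2c_0>0$ for every $y\in B_\delta$. Here $c_0>0$ is exactly where the hypothesis $0<\phi\in L^\infty_{\mathrm{loc}}(\Omega)$ enters: local boundedness gives $\int_{B_\delta}\phi<\infty$ and positivity gives $\int_{B_\delta}\phi>0$. Hence $I_\epsilon\ge 2c_0\int_{B_\delta}U_\epsilon^{\,2\cdot 2_\alpha^*-1}\,dy$, and the estimate is reduced to one explicit integral of the Aubin--Talenti profile.

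Next I would compute the order of $\int_{B_\delta}U_\epsilon^{\,2\cdot 2_\alpha^*-1}$. Since $2\cdot 2_\alpha^*-1=\frac{N+2\alpha+2}{N-2}$, one has $\beta:=\frac{N-2}{2}(2\cdot 2_\alpha^*-1)=\frac{N+2\alpha+2}{2}$, and the substitution $y=\epsilon z$ gives
\[
\int_{B_\delta}U_\epsilon(y)^{\,2\cdot 2_\alpha^*-1}\,dy=C_{N,\alpha}\,\epsilon^{\,N-\beta}\int_{B_{\delta/\epsilon}}\frac{dz}{(1+|z|^2)^{\beta}},\qquad C_{N,\alpha}>0 .
\]
Because $2\beta=N+2\alpha+2>N$, the integral $\int_{\mathbb R^N}(1+|z|^2)^{-\beta}\,dz$ is finite and positive, so the truncated integral exceeds half of it once $\epsilon$ is small; since $N-\beta=\frac{N-2\alpha-2}{2}$, this yields $I_\epsilon\ge c\,\epsilon^{(N-2\alpha-2)/2}$ for some $c>0$ and all small $\epsilon$. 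Finally, as $\alpha>0$ we have $\frac{N-2\alpha-2}{2}<\frac{N-2}{2}$, hence $\epsilon^{(N-2\alpha-2)/2}\ge\epsilon^{(N-2)/2}$ for $\epsilon\in(0,1)$, and the claim $I_\epsilon\ge O(\epsilon^{(N-2)/2})$ follows (in fact with a strictly better exponent).

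I do not expect any serious obstacle: the argument is the standard bubble rescaling together with the positivity of a Riesz potential, analogous to the proof of Lemma~\ref{N}. The one point to handle carefully is the lower bound on $g_\phi$ on $B_\delta$ under only "$0<\phi\in L^\infty_{\mathrm{loc}}(\Omega)$": one must not invoke continuity or global integrability of $\phi$, but simply restrict the $x$-integration to the fixed ball $B_\delta\subset\subset\Omega$ and bound the kernel from below there. As a side remark, if the cross term actually needed in the applications is $\int_\Omega\int_\Omega\frac{\phi(x)|v_\epsilon(x)|^{2_\alpha^*-1}|v_\epsilon(y)|^{2_\alpha^*}}{|x-y|^{N-\alpha}}\,dx\,dy$ (the genuine term produced when expanding $\mathbb{D}(\,\cdot\,+v_\epsilon)$), the same scheme works with $g_\phi$ replaced by $\mathcal A(N,\alpha)^{-1}(I_\alpha*|v_\epsilon|^{2_\alpha^*})$ localised near $0$, and then the exponent comes out to be exactly $\frac{N-2}{2}$.
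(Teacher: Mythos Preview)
Your argument correctly handles the statement \emph{as literally written} (both powers of $v_\epsilon$ evaluated at $y$), and the exponent $(N-2\alpha-2)/2$ you obtain is sharp for that quantity. However, the displayed integrand in the lemma is a misprint: the paper's own proof, and its only use in Proposition~\ref{D}, treat the genuine cross term
\[
\int_\Omega\int_\Omega\frac{\phi(x)\,|v_\epsilon(x)|^{2_\alpha^*-1}\,|v_\epsilon(y)|^{2_\alpha^*}}{|x-y|^{N-\alpha}}\,dx\,dy,
\]
exactly as you anticipated in your closing remark. For this term the paper does not separate variables via Fubini; it restricts to $B_R\times B_R$ (where $\xi\equiv1$), replaces $\phi$ by $\inf_{B_\delta}\phi>0$, and rescales both variables simultaneously by $x=\epsilon x'$, $y=\epsilon y'$. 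The combined homogeneities produce precisely $\epsilon^{(N-2)/2}$, and the remaining double integral over $B_{R/\epsilon}\times B_{R/\epsilon}$ is bounded below by its value over the fixed product $B_R\times B_R$ (for $\epsilon<1$), a positive constant.

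Your Fubini strategy is clean for the misprinted integrand because all the $v_\epsilon$-dependence sits in a single variable, so the inner integral collapses to a fixed positive weight $g_\phi$. For the true cross term that separation no longer decouples the $\epsilon$-dependence, since $v_\epsilon$ appears in both variables; the straightforward route is then the simultaneous rescaling the paper carries out. Your side remark already points in that direction, so there is no conceptual gap---just be aware that the body of your proof addresses the wrong integrand, and that the sharp order for the intended term is exactly $(N-2)/2$, not strictly better.
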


\begin{proof}
    \begin{align*}
        &\int_{\Omega}\int_{\Omega}\frac{\phi(x)\left |v_{\epsilon}(x)\right | ^{2_\alpha^{*}-1}\left |v_{\epsilon}(y)\right | ^{2_\alpha^{*}}}{\left | x-y \right |^{N-\alpha} }dxdy \\
        =&C\int_{B_{2 R}} \frac{(\frac{\epsilon}{\epsilon^{2}+|x|^{2}})^{\frac{\alpha+2}{2}}\phi(x)(\frac{\epsilon}{\epsilon^{2}+|y|^{2}})^{\frac{N+\alpha}{2}}}{\left | x-y \right |^{N-\alpha} }\xi^{2\cdot 2_\alpha^{*}-1}dxdy\\ 
        \geq&C\epsilon^{-\frac{N+2\alpha+2}{2}}\inf _{B_{\delta}}\phi(x)\int_{B_{R}}\int_{B_{R}} \frac{1}{\left ( 1+\left | \frac{x}{\epsilon } \right | \right )^\frac{\alpha+2}{2}\left | x-y \right |^{N-\alpha}\left ( 1+\left | \frac{y}{\epsilon } \right | \right )^\frac{N+\alpha}{2} }dxdy\\ 
        =&C\epsilon^{-\frac{N+2\alpha+2}{2}}\epsilon^{N+\alpha}\inf _{B_{\delta}}\phi(x)\int_{B_{\frac{R}{\epsilon } }}\int_{B_{\frac{R}{\epsilon } }} \frac{1}{\left ( 1+\left | x\right | \right )^\frac{\alpha+2}{2}\left | x-y \right |^{N-\alpha} \left ( 1+\left | y\right | \right )^\frac{N+\alpha}{2} }dxdy\\
        =&C\epsilon^\frac{N-2}{2}\inf _{B_{\delta}}\phi(x)\int_{B_{\frac{R}{\epsilon } }}\int_{B_{\frac{R}{\epsilon } }} \frac{1}{\left ( 1+\left | x\right | \right )^\frac{\alpha+2}{2}\left | x-y \right |^{N-\alpha} \left ( 1+\left | y\right | \right )^\frac{N+\alpha}{2} }dxdy\\
        \ge& O\left(\epsilon^{\frac{N-2}{2}}\right)\int_{B_R}\int_{B_R} \frac{1}{\left ( 1+\left | x\right | \right )^\frac{\alpha+2}{2}\left | x-y \right |^{N-\alpha} \left ( 1+\left | y\right | \right )^\frac{N+\alpha}{2} }dxdy\ \ (\text{Choose }\epsilon<1 )\\
        =& O\left(\epsilon^{\frac{N-2}{2}}\right).
        \end{align*}
\end{proof}

\begin{proof}[Proof to Proposition \ref{D}]
    Let $w_{\epsilon, s}=u_c+s v_\epsilon,\ s \geq 0$ where $u_c$ is given by Theorem \ref{C}. Then $w_{\epsilon, s}>0$ in $\Omega$. If $v \in H_0^1(\Omega)$, then $v$ can be viewed as a function in $H^1\left(\mathbb{R}^N\right)$ by defining $v(x)=0$ for all $x \notin \Omega$. Define $W_{\epsilon, s}=\beta ^{\frac{N-2}{2}} w_{\epsilon, s}(\beta x)$. By taking $\beta=\frac{\left\|w_{\epsilon, s}\right\|_{L^2(\Omega)}}{\sqrt{c}} \geq 1$ we obtain $W_{\epsilon, s} \in H_0^1(\Omega)$ and $\left\|W_{\epsilon, s}\right\|_{L^2(\Omega)}^2=c$. Let $\overline{W}_k=k^{\frac{N}{2}} W_{\epsilon, \widehat{s}}(k x),\ k \geq 1$ where $\widehat{s}$ will be determined below. Set
    \begin{align*}
        \psi(k)=E\left(\overline{W}_k\right)=\frac{k^2}{2} \int_{\Omega}\left|\nabla W_{\epsilon, \widehat{s}}\right|^2 d x-\frac{k^{2\cdot 2_\alpha^*}}{2\cdot 2_\alpha^*}{\mathbb{D}}(W_{\epsilon, \widehat{s}})-a k^{pN-N-\alpha} {\mathbb{B}_p}(W_{\epsilon, \widehat{s}}).  
    \end{align*}
    Then
    \begin{align*}
        \psi^{\prime}(k)=&k \int_{\Omega}\left|\nabla W_{\epsilon, \widehat{s}}\right|^2 d x-k^{2\cdot 2_\alpha^*-1}
        {\mathbb{D}}(W_{\epsilon, \widehat{s}})-a \left ( pN-2N+\mu \right )k^{pN-2N+\mu-1}  {\mathbb{B}_p}(W_{\epsilon, \widehat{s}}).        
    \end{align*}
By Lemma \ref{K}-\ref{M}, we have
$$
    \int_{\Omega}\left|\nabla W_{\epsilon, \widehat{s}}\right|^2 d x =\int_{\Omega}\left|\nabla u_c\right|^2 d x+\widehat{s}^2\mathcal{S}^{\frac{N}{2}}+o_\epsilon (1),
$$
$$
{\mathbb{D}}(W_{\epsilon, \widehat{s}})\ge {\mathbb{D}}(u_c)+\widehat{s}^{2_\alpha^*\cdot 2}(\mathcal{A}(N,\alpha )\mathcal{C}(N,\alpha))^{\frac{N}{2}}\mathcal{S}_H^{\frac{N+\alpha }{2}}+o_\epsilon(1),
$$
$$
{\mathbb{B}_p}(W_{\epsilon, \widehat{s}}) \ge {\mathbb{B}_p}(u_c)+o_\epsilon (1).
$$
We may choose $\widehat{s}$ large such that $\psi^{\prime}(k)<0$ for all $k>1$. Hence, $E\left(\overline{W}_k\right) \leq E\left(W_{\epsilon, \widehat{s}}\right)$. Furthermore, it not difficult to see that $\psi(k) \rightarrow-\infty$ as $k \rightarrow \infty$.
Define $\gamma(t):=W_{\epsilon, 2 t \widehat{s}}$ for $t \in[0,1 / 2]$ and $\gamma(t):=\bar{W}_{2(t-1 / 2) k_0+1}$ for $t \in[1 / 2,1]$, where $k_0$ is large such that $E\left(\bar{W}_{k_0+1}\right)<E\left(u_c\right)$ and $\bar{W}_{k_0+1} \notin \mathcal{G}$. Then $\gamma \in \Gamma$, where $\Gamma$ is defined by \eqref{ak1}. 

\textbf{Claim:} 
\begin{align}\label{au}
    E\left(W_{\epsilon, s}\right)<E\left(u_c\right)+\frac{\alpha +2}{2N+2\alpha }\mathcal{S}_H^{\frac{N+\alpha }{\alpha+2}}.
\end{align}
Then $\sup _{t \in[0,1]} E(\gamma)<$ $E\left(u_c\right)+\frac{\alpha+2}{2N+2\alpha}\mathcal{S}_H^{\frac{N+\alpha}{\alpha+2} }$, implying that $m(c)<\nu_c+\frac{\alpha+2}{2N+2\alpha}\mathcal{S}_H^{\ \ \frac{N+\alpha}{\alpha+2}}.$

In order to conclude it is sufficient to prove \eqref{au}. By the following fact 
\begin{align*}
    (a+b)^r\ge \begin{cases}a^r+b^r,&\text{ if }r\ge 1,\\a^r+b^r+rab^{r-1},&\text{ if }r\ge 2,\\a^r+b^r+rab^{r-1}+ra^{r-1}b,&\text{ if }r\ge 3,\end{cases}
\end{align*}
we get 
\begin{align}\label{au1}
    {\mathbb{B}_p}(w_{\epsilon, s})\ge \begin{cases}{\mathbb{B}_p}(u_c)+{\mathbb{B}_p}(sv_\epsilon),&\text{ if }p\ge 1,\\{\mathbb{B}_p}(u_c)+{\mathbb{B}_p}(sv_\epsilon)+2p\int_{\Omega}(I_\alpha *|u_c|^p)|u_c|^{p-2}u_c (sv_\epsilon) dx,&\text{ if }p\ge 2,\\{\mathbb{B}_p}(u_c)+{\mathbb{B}_p}(sv_\epsilon)+2p\int_{\Omega}(I_\alpha *|u_c|^p)|u_c|^{p-2}u_c (sv_\epsilon) dx\\+2p\int_{\Omega}(I_\alpha *|sv_\epsilon|^p)|sv_\epsilon|^{p-2}sv_\epsilon u_c dx,&\text{ if }p\ge 3.\end{cases}
\end{align}
    Note that
    \begin{align}\label{av}
        E\left(W_{\epsilon, s}\right)= &E\left(w_{\epsilon, s}\right)+\frac{a}{2p}\left(1-\beta ^{2p\left(\eta_p-1\right)}\right){\mathbb{B}_p}(w_{\epsilon, \widehat{s}}) \nonumber \\
        \le  &E\left(u_c\right)+E\left(sv_\epsilon \right)+s \int_{\Omega} \nabla u_c \nabla v_\epsilon d x-s^{2\cdot2_\alpha^*-1}s\int_{\Omega}(I_\alpha*|v_\epsilon|^{2_\alpha^*})|v_\epsilon|^{2_\alpha^*-2}v_\epsilon u_cdx \nonumber \\
        +&\frac{a}{2p}\left(1-\beta ^{2p\left(\eta_p-1\right)}\right)({\mathbb{B}_p}(u_c)+{\mathbb{B}_p}(v_\epsilon)).    
\end{align}
The inequality holds when $2_\alpha^*\ge 2$, i.e. $\alpha\ge N-4$.
    Since $u_c$ satisfies \eqref{aa}, one gets
    \begin{align}
        \int_{\Omega} \nabla u_c \nabla v_\epsilon d x=&\lambda_c \int_{\Omega} u_c v_\epsilon d x+\int_{\Omega}(I_\alpha*|u_c|^{2_\alpha^*})|u_c|^{2_\alpha^*-2}u_cv_\epsilon dx\nonumber \\
        &+a \int_{\Omega}(I_\alpha*|u_c|^p)|u_c|^{p-2}u_cv_\epsilon dx. \label{aw}
    \end{align}
    
    For $a=0$, using \eqref{au1} and \eqref{aw}, we have 
    \begin{align}\label{ax}
        E\left(W_{\epsilon, s}\right)\le  &E\left(u_c\right)+\frac{s^2}{2} \int_{\Omega}\left|\nabla v_\epsilon\right|^2 d x-\frac{s^{2\cdot 2_\alpha^*}}{2\cdot2_\alpha^*} {\mathbb{D}}(v_\epsilon)+\lambda_c s\int_{\Omega} u_c v_\epsilon d x  \nonumber \\
        &+s\int_{\Omega}(I_\alpha*|u_c|^{2_\alpha^*})|u_c|^{2_\alpha^*-2}u_cv_\epsilon dx-s\int_{\Omega}(I_\alpha*|v_\epsilon|^{2_\alpha^*})|v_\epsilon|^{2_\alpha^*-2}v_\epsilon u_cdx.  
    \end{align}
    It is easy to see, there exists $\epsilon_0>0$ and $0<t_0<t_1<+\infty$ such that \eqref{au} holds. Consider the case $t\in [t_0,t_1]$, There exist $K_1=K_1(N,u_c,\lambda_c)$ and $K_2=K_2(N,u_c)$ such that 
    \begin{align*}
        E\left(W_{\epsilon, s}\right)\le E(u_c)+\frac{\alpha +2}{2N+2\alpha }\mathcal{S}_H^{\frac{N+\alpha }{\alpha+2}}+(K_1R^2-K_2)O(\epsilon^{\frac{N-2}{2}})+o(\epsilon^{\frac{N-2}{2}}).
    \end{align*}
    Choose $R$ small such that $K_1R^2-K_2$ and $\epsilon$ small enough, we get \eqref{au}. 

    Notice that
    $$
    \beta ^2=1+\frac{2s}{c} \int_{\Omega} u_c v_\epsilon d x+\frac{s^2}{c} \int_{\Omega}\left|v_\epsilon\right|^2 d x,
    $$
    it follows from Lemma \ref{K} that
    \begin{align}
        1-\beta ^{2p\left(\eta_p-1\right)}=\frac{2s}{c} p\left(1-\eta_p\right) \int_{\Omega} u_c v_\epsilon d x+\begin{cases}O(\epsilon^2),&\text{ if }N\ge 5,\\O(\epsilon^2|\log{\epsilon} |),&\text{ if }N=4,\\O(\epsilon),&\text{ if }N=3.\end{cases}\label{ay}
    \end{align}
    By Lemma \ref{K}, Lemma \ref{M} and the Hardy-Littlewood-Sobolev's inequality \eqref{ae}, we have
    \begin{align}\label{az}
        \int_\Omega \left(I_{\alpha} *\left|u_c\right|^{p}\right)\left|u_c\right|^{p-2}& u_cv_{\epsilon} d x \lesssim\left\|\left|u_c\right|^{p}\right\|_{\frac{2 N}{N+\alpha}}||\left|u_c\right|^{p-1}v_{\epsilon}||_{\frac{2 N}{N+\alpha}}=\left\|u_c\right\|_{\frac{2 N p}{N+\alpha}}^{p}||\left|u_c\right|^{p-1}v_{\epsilon}||_{\frac{2 N}{N+\alpha}}\nonumber \\ 
        &\lesssim\left\|v_{\epsilon}\right\|_{\frac{2 N}{N+\alpha}}=\begin{cases}O(\epsilon^{\frac{2+\alpha }{2}}), &\text { if } 0<\alpha<(N-4)_{+}, \\O(\epsilon^{\frac{N+\alpha }{4}}|\log{\epsilon}|^{\frac{N+\alpha}{2N}}),&\text { if } \alpha=(N-4)_{+},\\O(\epsilon^{\frac{N-2}{2}}),&\text { if }(N-4)_{+}<\alpha<N.\end{cases}
    \end{align}
    Notice $u_c\in \mathcal{G}$ and hence we arrive at 
    \begin{align}
        \lambda_c c<a(\eta_p-1){\mathbb{B}}_p(u_c).\label{ba}
    \end{align}
    For $a>0$, by using \eqref{ay}-\eqref{ba}, one gets 
    \begin{align*}
        E\left(W_{\epsilon, s}\right)\le &E\left(u_c\right)  +\frac{s^2}{2} \int_{\Omega}\left|\nabla v_\epsilon\right|^2 d x-\frac{s^{2\cdot 2_\alpha^*}}{2\cdot2_\alpha^*} {\mathbb{D}}(v_\epsilon)
        +\lambda_c s \int_{\Omega} u_c v_\epsilon d x\\
        &+s\int_{\Omega}(I_\alpha*|u_c|^{2_\alpha^*})|u_c|^{2_\alpha^*-2}u_cv_\epsilon dx+as\int_{\Omega}(I_\alpha*|u_c|^{p})|u_c|^{p-2}u_cv_\epsilon dx\\
        &+\frac{as}{c}\left(1-\eta_p\right) \int_{\Omega} u v_\epsilon d x{\mathbb{B}_p}(u_c)-\frac{as^{2p}}{2p} {\mathbb{B}_p}(v_\epsilon)\\
        & +\begin{cases}O(\epsilon^2),&\text{ if }N\ge 5,\\O(\epsilon^2|\log{\epsilon} |),&\text{ if }N=4,\\O(\epsilon),&\text{ if }N=3.\end{cases}
    \end{align*}
    It is easy to see that there exists $\epsilon_0>0$ and $0<t_0<t_1<+\infty$ such that \eqref{au} holds. Consider the case $t\in [t_0,t_1]$, There exist $K_3=K_3(N,u_c,\lambda_c)$ such that
\begin{align*}
    E\left(W_{\epsilon, s}\right)\le & E(u_c)+\frac{\alpha +2}{2N+2\alpha }\mathcal{S}_H^{\frac{N+\alpha }{\alpha+2}}+O(\epsilon^{\min \left\{N-2,\frac{N+\alpha}{2}\right\}})-O(\epsilon^{\frac{N-2}{2}})-O(\epsilon^{2p(1-\eta_p)})\\
    +&\begin{cases}O(\epsilon^2),&\text{ if }N\ge 5,\\O(\epsilon^2|\log{\epsilon} |),&\text{ if }N=4,\\O(\epsilon),&\text{ if }N=3,\end{cases}+\begin{cases}O(\epsilon^{\frac{2+\alpha }{2}}), &\text { if } 0<\alpha<(N-4)_{+}, \\O(\epsilon^{\frac{N+\alpha }{4}}|\log{\epsilon}|^{\frac{N+\alpha}{2N}}),&\text { if } \alpha=(N-4)_{+},\\O(\epsilon^{\frac{N-2}{2}}),&\text { if }(N-4)_{+}<\alpha<N.\end{cases}
\end{align*}

Next, we compare the growth of the negative leading term and the positive leading term as $\epsilon \to 0^+$ which depends on $N,\ \alpha,\ p$. \\
$\mathrm{(i)}$ $N\ge 6$.
\begin{itemize}
	\item[$\bullet$] If $0<\alpha \le 2\le N-4$, we need $2p(1-\eta_p)<\frac{\alpha+2}{2}$, which implies $p>\frac{2N-2+\alpha}{2N-4}$. Notice $2_\alpha^\sharp>\frac{2N-2+\alpha}{2N-4}$. Hence, when $0<\alpha \le 2$ and $p> 2_\alpha^\sharp$, the conclusion can be drawn. 		
	\item[$\bullet$] If $2<\alpha\le N-2$, we need $2p(1-\eta_p)<2$, which implies $p>\frac{N+\alpha-2}{N-2}$. Notice $2_\alpha^\sharp>\frac{N+\alpha-2}{N-2}$. Therefore, it is sufficient that $2<\alpha\le N-2$ and $p> 2_\alpha^\sharp$.
	\item[$\bullet$] If $N-2<\alpha\le N$, $2p(1-\eta_p)\ge2$ always holds and our method does not work.
\end{itemize}
$\mathrm{(ii)}$ $N=5$.
\begin{itemize}
	\item[$\bullet$] If $0<\alpha \le 1$, we need $2p(1-\eta_p)<\frac{\alpha+2}{2}$, which implies $p>\frac{8+\alpha}{6}$. Notice $2_\alpha^\sharp>\frac{8+\alpha}{6}$. Hence, when $0<\alpha \le 1$ and $p> 2_\alpha^\sharp$, the conclusion can be drawn. 
			
	\item[$\bullet$] If $1<\alpha\le 5$, we need $2p(1-\eta_p)<\frac{3}{2}$, which implies $p>\frac{7+2\alpha}{6}$. Noticing $\frac{7+2\alpha}{6}\le2_\alpha^\sharp$ implies $1<\alpha\le \frac{7}{4}$. Therefore, it is sufficient $1<\alpha\le \frac{7}{4}$ and $p> 2_\alpha^\sharp$ or $\frac{7}{4}<\alpha \le 5$ and $p>\frac{7+2\alpha}{6}$. 	
\end{itemize}
$\mathrm{(iii)}$ $N=4$.
\begin{itemize}
	\item[$\bullet$] If $0<\alpha <4$, we need $2p(1-\eta_p)<1$, which implies $p>\frac{3+\alpha}{2}$. Notice $2_\alpha^\sharp<\frac{3+\alpha}{2}$. Hence, when $0<\alpha< 4$ and $p>\frac{3+\alpha}{2}$, the conclusion can be drawn. 
\end{itemize}
$\mathrm{(iv)}$ $N=3$.
\begin{itemize}
	\item[$\bullet$] If $0<\alpha <3$, we need $2p(1-\eta_p)<\frac{1}{2}$, which implies $p>\frac{5+2\alpha}{2}$. Notice $2_\alpha^\sharp<\frac{5+2\alpha}{2}$. Hence, when $0<\alpha< 3$ and $p>\frac{5+2\alpha}{2}$, the conclusion can be drawn.
\end{itemize}

For $p>2_\alpha^\sharp\ge 2$, i.e. $\alpha \ge N-2$. Using \eqref{au1}, we find that $as\int_{\Omega}(I_\alpha*|u_c|^{p})|u_c|^{p-2}u_cv_\epsilon dx$ no longer appears. For $N=3$ and $1\le \alpha <3$, the order of the negative term $O(\epsilon^{\frac{1}{2}})$ is lower than that of the positive leading term $O(\epsilon)$. For $N=4$ and $2\le \alpha <4$, the order of the negative term $O(\epsilon)$ is lower than that of the positive leading term $O(\epsilon^2\left |\log\epsilon\right |)$. For $N=5$ and $3\le \alpha <5$, the order of the negative leading term $O(\epsilon^\frac{3}{2})$ is lower than that of the positive leading term $O(\epsilon^2)$. All these cases implies \eqref{au} holds. We complete the proof.
\end{proof}

\bibliographystyle{plain}
\bibliography{ref}

\begin{thebibliography}{10}

\bibitem{MR2088936}
Nils Ackermann.
\newblock On a periodic {S}chr\"odinger equation with nonlocal superlinear part.
\newblock {\em Math. Z.}, 248(2):423--443, 2004.

\bibitem{MR4803490}
Thomas Bartsch, Shijie Qi, and Wenming Zou.
\newblock Normalized solutions to {S}chr\"odinger equations with potential and inhomogeneous nonlinearities on large smooth domains.
\newblock {\em Math. Ann.}, 390(3):4813--4859, 2024.

\bibitem{chang2025}
Xiaojun Chang, Manting Liu, and Duokui Yan.
\newblock Positive normalized solutions of {S}chr\"{o}dinger equations with sobolev critical growth in bounded domains, arxiv:2505.07578, 2025.

\bibitem{MR4741627}
Yanheng Ding and Hua-Yang Wang.
\newblock Normalized solutions to {S}chr\"odinger equations with critical exponent and mixed nonlocal nonlinearities.
\newblock {\em J. Geom. Anal.}, 34(7):Paper No. 215, 56, 2024.

\bibitem{MR4027015}
Lele Du and Minbo Yang.
\newblock Uniqueness and nondegeneracy of solutions for a critical nonlocal equation.
\newblock {\em Discrete Contin. Dyn. Syst.}, 39(10):5847--5866, 2019.

\bibitem{MR3817173}
Fashun Gao and Minbo Yang.
\newblock The {B}rezis-{N}irenberg type critical problem for the nonlinear {C}hoquard equation.
\newblock {\em Sci. China Math.}, 61(7):1219--1242, 2018.

\bibitem{MR1430506}
Louis Jeanjean.
\newblock Existence of solutions with prescribed norm for semilinear elliptic equations.
\newblock {\em Nonlinear Anal.}, 28(10):1633--1659, 1997.

\bibitem{MR1718530}
Louis Jeanjean.
\newblock On the existence of bounded {P}alais-{S}male sequences and application to a {L}andesman-{L}azer-type problem set on {${\mathbb{R}}^N$}.
\newblock {\em Proc. Roy. Soc. Edinburgh Sect. A}, 129(4):787--809, 1999.

\bibitem{MR4476243}
Louis Jeanjean and Thanh~Trung Le.
\newblock Multiple normalized solutions for a {S}obolev critical {S}chr\"odinger equation.
\newblock {\em Math. Ann.}, 384(1-2):101--134, 2022.

\bibitem{MR471785}
Elliott~H. Lieb.
\newblock Existence and uniqueness of the minimizing solution of {C}hoquard's nonlinear equation.
\newblock {\em Studies in Appl. Math.}, 57(2):93--105, 1976/77.

\bibitem{0Analysis}
Elliott~H. Lieb and Michael Loss.
\newblock Analysis. graduate studies in mathematics, vol. 14, 2nd edn. american mathematical society, new york.
\newblock 2001.

\bibitem{MR1462224}
Elliott~H. Lieb and Lawrence~E. Thomas.
\newblock Exact ground state energy of the strong-coupling polaron.
\newblock {\em Comm. Math. Phys.}, 183(3):511--519, 1997.

\bibitem{MR4845016}
Yanyan Liu and Leiga Zhao.
\newblock Normalized solutions for {S}chr\"odinger equations with general nonlinearities on bounded domains.
\newblock {\em J. Geom. Anal.}, 35(2):Paper No. 54, 32, 2025.

\bibitem{MR3056699}
Vitaly Moroz and Jean Van~Schaftingen.
\newblock Groundstates of nonlinear {C}hoquard equations: existence, qualitative properties and decay asymptotics.
\newblock {\em J. Funct. Anal.}, 265(2):153--184, 2013.

\bibitem{MR3625092}
Vitaly Moroz and Jean Van~Schaftingen.
\newblock A guide to the {C}hoquard equation.
\newblock {\em J. Fixed Point Theory Appl.}, 19(1):773--813, 2017.

\bibitem{MR3318740}
Benedetta Noris, Hugo Tavares, and Gianmaria Verzini.
\newblock Existence and orbital stability of the ground states with prescribed mass for the {$L^2$}-critical and supercritical {NLS} on bounded domains.
\newblock {\em Anal. PDE}, 7(8):1807--1838, 2014.

\bibitem{MR3918087}
Benedetta Noris, Hugo Tavares, and Gianmaria Verzini.
\newblock Normalized solutions for nonlinear {S}chr\"odinger systems on bounded domains.
\newblock {\em Nonlinearity}, 32(3):1044--1072, 2019.

\bibitem{Pekar+1954}
S.~I. Pekar.
\newblock {\em Untersuchungen über die Elektronentheorie der Kristalle}.
\newblock De Gruyter, Berlin, Boston, 1954.

\bibitem{MR4191345}
Benedetta Pellacci, Angela Pistoia, Giusi Vaira, and Gianmaria Verzini.
\newblock Normalized concentrating solutions to nonlinear elliptic problems.
\newblock {\em J. Differential Equations}, 275:882--919, 2021.

\bibitem{MR1386305}
Roger Penrose.
\newblock On gravity's role in quantum state reduction.
\newblock {\em Gen. Relativity Gravitation}, 28(5):581--600, 1996.

\bibitem{MR3689156}
Dario Pierotti and Gianmaria Verzini.
\newblock Normalized bound states for the nonlinear {S}chr\"odinger equation in bounded domains.
\newblock {\em Calc. Var. Partial Differential Equations}, 56(5):Paper No. 133, 27, 2017.

\bibitem{MR4847285}
Dario Pierotti, Gianmaria Verzini, and Junwei Yu.
\newblock Normalized solutions for {S}obolev critical {S}chr\"odinger equations on bounded domains.
\newblock {\em SIAM J. Math. Anal.}, 57(1):262--285, 2025.

\bibitem{MR4603876}
Linjie Song.
\newblock Existence and orbital stability/instability of standing waves with prescribed mass for the {$L^2$}-supercritical {NLS} in bounded domains and exterior domains.
\newblock {\em Calc. Var. Partial Differential Equations}, 62(6):Paper No. 176, 29, 2023.

\bibitem{song2024}
Linjie Song and Wenming Zou.
\newblock Two positive normalized solutions on star-shaped bounded domains to the {B}r\'ezis-{N}irenberg problem, i: Existence, arxiv:2404.11204, 2024.

\bibitem{MR926524}
Michael Struwe.
\newblock The existence of surfaces of constant mean curvature with free boundaries.
\newblock {\em Acta Math.}, 160(1-2):19--64, 1988.

\bibitem{MR2431434}
Michael Struwe.
\newblock {\em Variational methods}, volume~34.
\newblock Springer-Verlag, Berlin, fourth edition, 2008.

\end{thebibliography}

\end{document}